\renewcommand{\a}{\alpha}
\renewcommand{\b}{\beta}
\newcommand{\e}{\varepsilon}
\renewcommand{\O}{\Omega}
\newcommand{\normeq}{\trianglelefteqslant}
\newcommand{\la}{\langle}
\newcommand{\ra}{\rangle}
\newcommand{\leqs}{\leqslant}
\newcommand{\geqs}{\geqslant}
\newcommand{\vs}{\vspace{2mm}}
\newcommand{\imod}[1]{\allowbreak\mkern4mu({\operator@font mod}\,\,#1)}
\newtheorem{theorem}{Theorem} 
\newtheorem{theoremm}{Theorem}
\newtheorem{corr}[theoremm]{Corollary}
\newtheorem*{conj*}{Conjecture}
\newtheorem{conj}[theorem]{Conjecture}
\newtheorem{thm}{Theorem}[section] 
\newtheorem{prop}[thm]{Proposition} 
\newtheorem{lem}[thm]{Lemma}
\theoremstyle{definition}
\newtheorem{rem}[thm]{Remark}
\newtheorem{remk}[theorem]{Remark}
\begin{document}

\author{Timothy C. Burness}
\address{T.C. Burness, School of Mathematics, University of Bristol, Bristol BS8 1UG, UK}
\email{t.burness@bristol.ac.uk}

\author{Hong Yi Huang}
\address{H.Y. Huang, School of Mathematics and Statistics, University of St Andrews, KY16 9SS, UK}
\curraddr{Department of Mathematics, Southern University of Science and Technology, Shenzhen 518055, Guangdong, P. R. China}
\email{11612012@mail.sustech.edu.cn}
 
\title[On the intersections of Sylow subgroups in almost simple groups]{On the intersections of Sylow subgroups \\ in almost simple groups}

\dedicatory{\rm Dedicated to Professor Viktor Zenkov} 

\begin{abstract}
Let $G$ be a finite almost simple group and let $H$ be a Sylow $p$-subgroup of $G$.
As a special case of a theorem of Zenkov, there exist $x,y \in G$ such that $H \cap H^x \cap H^y = 1$. In fact, if $G$ is simple, then a theorem of Mazurov and Zenkov reveals that $H \cap H^x = 1$ for some $x \in G$. However, it is known that the latter property  does not extend to all almost simple groups. For example, if $G = S_8$ and $p=2$, then $H \cap H^x \ne 1$ for all $x \in G$. Further work of Zenkov in the 1990s shows that such examples are rare (for instance, there are no such examples if $p \geqslant 5$) and he reduced the classification of all such pairs to the situation where $p=2$ and $G$ is an almost simple group of Lie type defined over a finite field $\mathbb{F}_q$ and either $q=9$ or $q$ is a Mersenne or Fermat prime. In this paper, by adopting a probabilistic approach based on fixed point ratio estimates, we complete Zenkov's classification.
\end{abstract}

\date{\today}

\maketitle

\section{Introduction}\label{s:intro}

This paper is a contribution to an extensive and expanding literature on problems concerning the intersections of nilpotent subgroups in finite groups, which stretches  back several decades. In general terms, our main problem of interest can be stated as follows: given a finite group $G$ and a collection of nilpotent subgroups $H_1, \ldots, H_k$, can we find  elements $x_i \in G$ such that the intersection of the conjugate subgroups $\bigcap_i H_i^{x_i}$ is as small as possible, in some natural sense? For example, a theorem of Passman \cite{Pass} from the 1960s states that if $G$ is $p$-soluble and $H$ is a Sylow $p$-subgroup of $G$ for some prime $p$, then there exist elements $x,y \in G$ such that 
\[
H \cap H^x \cap H^y = O_p(G),
\]
where $O_p(G)$ denotes the largest normal $p$-subgroup of $G$ (and is therefore contained in every conjugate of $H$). 

In more recent years, Passman's theorem has been extended and generalised in various directions. For example, Zenkov uses the Classification of Finite Simple Groups in \cite{Z_92p} to extend Passman's theorem to all finite groups, without any assumption on the $p$-solubility of $G$. And in \cite{Dolfi}, Dolfi proves that if $G$ is $\pi$-soluble and $H$ is a Hall $\pi$-subgroup, with $\pi$ any set of primes, then $H \cap H^x \cap H^y = O_{\pi}(G)$ for some $x,y \in G$ (here Passman's theorem is the special case $\pi = \{p\}$). In a different direction, Zenkov \cite{Z_3} shows that if $G$ is a finite group with nilpotent subgroups $A,B,C$, then there exist $x,y \in G$ such that 
\[
A \cap B^x \cap C^y \leqs F(G),
\]
where $F(G)$ is the Fitting subgroup of $G$. It is not too difficult to see that the latter statement is false if we only consider two nilpotent subgroups. For example, if $G = S_8$ is the symmetric group of degree $8$ and $A = B$ is a Sylow $2$-subgroup, then $F(G) = 1$ and $A \cap B^x \ne 1$ for all $x \in G$. We refer the reader to \cite{Z_ab2, Z_92p, Z_ab} for some further variations on this theme.

In this paper, we will focus on the intersections of Sylow subgroups. So let us fix a finite group $G$ and let $H$ be a Sylow $p$-subgroup of $G$, where $p$ is a prime divisor of $|G|$. Let us assume that $O_p(G) = 1$, which means that $H$ is a \emph{core-free}  subgroup of $G$. In particular, it follows that $G$ acts faithfully on the set $\O = G/H$ of cosets of $H$ in $G$ and this allows us to view $G \leqs {\rm Sym}(\O)$ as a transitive permutation group on $\O$ with point stabiliser $H$. In this setting, we recall that a \emph{base} for $G$ is a subset $B$ of $\O$ with the property that the pointwise stabiliser of $B$ in $G$ is trivial. And then the \emph{base size} of $G$, denoted $b(G,H)$, is defined to be the minimal size of a base and we note that 
\[
b(G,H) = \min\left\{ |S| \,:\, S \subseteq G, \, \bigcap_{x \in S} H^x = 1\right\}.
\]
Determining the base size of a given group is a fundamental problem in permutation group theory and there is a vast literature on this topic, dating back more than a century, with bases finding natural connections and applications across many different areas of group theory and combinatorics. We refer the reader to the survey articles \cite{BC,Maroti}  and \cite[Section 5]{B180} for further details and references on bases and their applications.

In the language of bases, Zenkov's aforementioned theorem from \cite{Z_3} implies that if $G$ is a finite transitive permutation group with $F(G) = 1$ and a nilpotent point stabiliser $H$, then $b(G,H) \leqs 3$ and this bound is best possible (for instance, if $G = S_8$ and $H$ is a Sylow $2$-subgroup, then $b(G,H) = 3$). The analogous problem for transitive groups with soluble point stabilisers is less well understood. Here the main conjecture is due to Vdovin, which asserts that $b(G,H) \leqs 5$ if $R(G) = 1$, where $R(G)$ is the soluble radical of $G$ (see Problem 17.41(b) in the \emph{Kourovka Notebook} \cite{Kou}). In other words, if $G$ is a finite group with $R(G) = 1$ and $H$ is a soluble subgroup, then it is conjectured that there exist elements $x_1, \ldots, x_4 \in G$ such that 
\[
H \cap H^{x_1} \cap H^{x_2} \cap H^{x_3} \cap H^{x_4} = 1.
\]
Once again, examples show that this proposed upper bound is optimal (for instance, if $G = S_8$ and $H = S_4 \wr S_2$, then $b(G,H) = 5$). This conjecture remains open, although there is a reduction to almost simple groups due to Vdovin \cite{Vdovin}, and there is a further reduction to groups of Lie type \cite{Bay1,B23}, with recent progress on classical groups due to Baykalov \cite{Bay2}. The conjecture in the special case where $H$ is a soluble maximal subgroup of $G$ is proved in \cite{Bur21} and we refer the reader to \cite[Section 1]{AB} for various extensions of a similar flavour.

Let us now focus our attention on the case where $G$ is a finite almost simple group with socle $G_0$. Here $G_0$, which is the unique minimal normal subgroup of $G$, is a non-abelian simple group and by identifying $G_0$ with its group of inner automorphisms we have
\[
G_0 \normeq G \leqs {\rm Aut}(G_0).
\]
Let $H$ be a non-trivial nilpotent subgroup of $G$. Then $b(G,H) \leqs 3$ by Zenkov's theorem \cite{Z_3} and we highlight the following conjecture in the special case where  $G$ is simple (see Problem 15.40 in \cite{Kou}):

\begin{conj}[Vdovin \cite{Kou}, 2002]\label{c:nilp}
If $G$ is a finite non-abelian simple group and $H$ is a non-trivial nilpotent subgroup, then $b(G,H) = 2$. 
\end{conj}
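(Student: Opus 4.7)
The lower bound $b(G,H) \geqs 2$ is immediate, and Zenkov's theorem recalled above gives $b(G,H) \leqs 3$, so the content of the conjecture is to rule out $b(G,H) = 3$ by producing a single $x \in G$ with $H \cap H^x = 1$. My plan is to adopt the probabilistic approach based on fixed point ratio estimates, in the spirit of the present paper. As a preliminary reduction, the case where $H$ is a $p$-group is disposed of by the Mazurov--Zenkov theorem: writing $H \leqs P$ for some Sylow $p$-subgroup $P$ of $G$ and choosing $x \in G$ with $P \cap P^x = 1$ yields $H \cap H^x \leqs P \cap P^x = 1$. So I may assume $H = H_{p_1} \times \cdots \times H_{p_r}$ is a direct product of its Sylow subgroups for $r \geqs 2$ distinct primes.

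The standard counting argument (see \cite{BC}) gives
\[
\frac{|\{x \in G : H \cap H^x \neq 1\}|}{|G|} \leqs \sum_{i=1}^{k} \frac{|x_i^G \cap H|^2}{|x_i^G|},
\]
where $x_1,\ldots,x_k$ are representatives for the $G$-conjugacy classes of prime-order elements meeting $H$, and it suffices to show that this sum is strictly less than $1$. Since every prime-$p$ element of $H$ lies in $H_p$, the right-hand side decomposes as $\sum_{p \mid |H|} Q_p$, and I would bound each $Q_p$ by combining the crude estimate $|x^G \cap H| \leqs |H_p|$ with the sharp fixed point ratio bounds coming from the Liebeck--Saxl--Seitz and Burness programmes, typically of the form $\fpr(x, G/H) \leqs |x^G|^{-s}$ for an explicit $s > 0$ depending on the family.

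The proof would then proceed by case analysis on $G$. For $G = A_n$, the cycle structure of a nilpotent subgroup involving more than one prime is highly restricted, making $|H|$ small relative to $|A_n|$, and standard cycle-type fpr bounds suffice. For sporadic $G$, a finite verification via character tables completes the argument. For $G$ of Lie type over $\F$ of defining characteristic $\ell$, one exploits the fact that if $\ell \in \{p_1,\ldots,p_r\}$ then the unipotent part $H_\ell$ and the semisimple complement $H/H_\ell$ must centralise each other, forcing $H$ into the Levi factor of a parabolic subgroup; otherwise $H$ is an $\ell'$-group lying in the normaliser of a maximal torus. Both situations place tight structural and numerical constraints on $H$, allowing the fpr machinery to be deployed effectively. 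The main obstacle, as is typical for such problems, is expected to be the low-rank/small-$q$ regime, where the fpr bounds degrade and $|H_p|$ may remain non-negligible compared with the smallest class sizes in $G$; these residual cases would most likely require explicit construction of a conjugating element $x$, or direct computational verification, much as in the exceptional cases settled in the present paper.
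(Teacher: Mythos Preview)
The statement you are attempting to prove is labelled \emph{Conjecture}~\ref{c:nilp} in the paper, and the paper does not prove it. Indeed, immediately after stating it the authors write that the conjecture ``has been resolved in various special cases \ldots\ but it remains open for simple groups of Lie type''. So there is no proof in the paper against which to compare your proposal; what you have written is a research outline for an open problem.

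As such an outline, your reductions are sound: the $p$-group case does follow from Mazurov--Zenkov, and the alternating and sporadic cases are already settled in the literature (see the references \cite{Kurm,Z_spor} cited in the paper). The genuine content of the conjecture is therefore exactly the Lie type case, and here your sketch is too optimistic. Two specific points. First, your structural dichotomy for nilpotent $H$ in a group of Lie type in characteristic $\ell$ is not accurate as stated: if $\ell \mid |H|$ then $H$ lies in a parabolic subgroup (since $H_\ell$ normalises, and is centralised by, the rest of $H$), but it need not lie in a Levi factor, as $H_\ell$ itself sits in the unipotent radical; and a nilpotent $\ell'$-subgroup is not in general contained in the normaliser of a maximal torus. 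Second, and more seriously, the probabilistic method with crude bounds $|x^G \cap H| \leqs |H_p|$ is precisely what fails to close the problem: even for the Sylow case treated in this paper, the authors need careful counting of involutions of specific types inside explicit overgroups $L \geqs H$, together with computer verification for a nontrivial list of small cases. For an arbitrary nilpotent $H$ there is no obvious canonical overgroup playing the role of $L$, and the ``low-rank/small-$q$'' residue you anticipate is not a finite list but an infinite family of configurations. Your proposal correctly identifies the shape of the difficulty but does not supply the missing idea.
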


This conjecture has been resolved in various special cases (see \cite{Kurm,Z_spor} for alternating and sporadic groups, for example), but it remains open for simple groups of Lie type. We refer the reader to Remarks \ref{r:ls} and \ref{r:ab} for a brief discussion of some related open problems.

The case where $H$ is a Sylow $p$-subgroup of $G$ for some prime divisor $p$ of $|G|$ is of course a natural special case of Conjecture \ref{c:nilp}. In this setting, the conjecture was proved by Mazurov and Zenkov \cite{ZM} in 1996. It is interesting to note that their proof for groups of Lie type relies on several deep results in representation theory. More precisely, a theorem of Green \cite{Green} from 1962 reveals that if $G$ is a finite group, $H$ is a Sylow $p$-subgroup and $D$ is the defect group of a $p$-block for $G$, then $H \cap H^x = D$ for some $x \in G$. This key observation is then  combined with later work of Michler \cite{Michler} and Willems \cite{Willems} in the 1980s, which shows that every finite simple group of Lie type has a $p$-block of defect zero for every prime divisor $p$ of $|G|$, whence $D=1$ is trivial and Green's theorem implies that $b(G,H) = 2$.

As noted above, there exist almost simple groups $G$ with a Sylow $p$-subgroup $H$ such that $b(G,H) = 3$. So it is natural to seek a complete classification of the pairs $(G,p)$ with this property. This challenge was taken up by Zenkov in \cite{Z_92p}, which was published in 1996. In this substantial 92-page paper, Zenkov proves a number of remarkable results, culminating in the following major reduction of the original problem.

\begin{theorem}[Zenkov \cite{Z_92p}, 1996]\label{t:zenkov}
Let $G$ be an almost simple group with socle $G_0$ and let $H$ be a Sylow $p$-subgroup of $G$, where $p$ is a prime divisor of $|G|$. Then $b(G,H) \leqs 3$, with equality possible only if $p \in \{2,3\}$ and one of the following holds:

\begin{itemize}\addtolength{\itemsep}{0.2\baselineskip}
\item[{\rm (i)}] $(p,G)$ is recorded in Table \ref{tab:zen}, in which case $b(G,H) = 3$; or

\item[{\rm (ii)}] $p=2$ and $G_0$ is a simple group of Lie type over $\mathbb{F}_q$, where $q = 9$ or $q$ is a Mersenne or Fermat prime.
\end{itemize}
\end{theorem}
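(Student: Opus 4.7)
\emph{Plan.} The upper bound $b(G,H) \leqs 3$ is immediate from the theorem of Zenkov in \cite{Z_3} cited above: almost simple $G$ have $F(G)=1$, so taking $A=B=C=H$ yields $x,y \in G$ with $H\cap H^x\cap H^y=1$. The substantive content is therefore the classification of those pairs $(G,p)$ for which $b(G,H) \geqs 3$, i.e.\ for which no single element $x\in G$ achieves $H\cap H^x=1$.

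The overall strategy is to reduce to the simple case via the Mazurov-Zenkov theorem, which provides $y\in G_0$ with $(H\cap G_0)\cap(H\cap G_0)^y=1$. If $G=G_0$ this already delivers $b(G,H)=2$, so one may assume $G/G_0$ is non-trivial and analyse how outer automorphisms obstruct the conclusion. Concretely, $H/(H\cap G_0)$ is a Sylow $p$-subgroup of $G/G_0\leqs\operatorname{Out}(G_0)$, and failure of $b(G,H)=2$ forces every $G_0$-conjugate of $H$ to meet $H$ non-trivially in a rigid way. The probabilistic approach flagged in the abstract is to quantify this via fixed point ratios: if the sum $\sum_{g \in H\setminus\{1\}}\operatorname{fpr}(g)$ on $\Omega=G/H$ is less than one, then $b(G,H)=2$, so the task becomes producing fpr estimates sharp enough to rule out $b(G,H)=3$.

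With this framework in place the argument proceeds by case division on the socle. For $G_0=A_n$, the explicit wreath-product description of Sylow $p$-subgroups of $S_n$ enables a direct intersection analysis that exposes the $(2,S_8)$ exception and pins down the remaining alternating entries. For sporadic socles the problem reduces to a finite computation aided by character-theoretic fpr bounds. For $G_0$ of Lie type in non-defining characteristic $p$, $H$ lies in the normaliser of a maximal torus, and exploiting the Weyl-group structure together with standard fpr estimates for semisimple elements produces a second base point whenever $q$ is not too small; the elimination of primes $p \geqs 5$ then follows because $|\operatorname{Out}(G_0)|_p$ is small and the Sylow $p$-subgroup of $G_0$ splits off cleanly, so the generic bound succeeds.

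The principal obstacle is the defining-characteristic analysis for $p \in \{2,3\}$ in groups of Lie type over $\mathbb{F}_q$. There the Sylow $p$-subgroup is the unipotent radical of a Borel, conjugates overlap along large portions of the root system, and field automorphisms of $p$-power order interact non-trivially with the root subgroups. Controlling whether some single conjugate avoids $H$ entirely requires delicate counting in the Bruhat decomposition, and the obstruction surfaces precisely when $q\pm 1$ has exceptional $2$- or $3$-adic valuation -- which is the Mersenne/Fermat/$q=9$ condition left open in part~(ii) and addressed in the present paper via fpr estimates.
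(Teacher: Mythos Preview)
This theorem is a cited result of Zenkov (1996); the paper does not prove it, but Remark~\ref{r:zenkov} summarises Zenkov's argument. Your plan departs from that summary in two essential ways, and one of them is a genuine error of substance rather than of approach.

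First, the strategy you describe --- fixed point ratio estimates and the inequality $\sum_{g\in H\setminus\{1\}}\operatorname{fpr}(g)<1$ --- is the method of the \emph{present} paper, applied to resolve the open cases in part~(ii). It is not the method Zenkov used in \cite{Z_92p}. As recorded in Remark~\ref{r:zenkov}, Zenkov's proof proceeds by a minimal-counterexample analysis: for $p$ odd (Lie type) one eliminates a smallest counterexample directly; for $p=2$ and $q$ odd the key tool is \cite[Lemma~4.3]{Z_92p}, which shows that in a minimal counterexample one has $O_2(C_G(z))\cap H^x\ne 1$ for every $x\in G$, where $z\in Z(H)\cap G_0$ is a central involution. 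One then reaches a contradiction by analysing involution centralisers, not by fpr estimates. (Your appeal to \cite{Z_3} for the bound $b(G,H)\leqslant 3$ is also anachronistic: that paper is from 2021, whereas the bound for Sylow subgroups is already in \cite{Z_92p}.)

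Second, your final paragraph contains a concrete mathematical error. You write that the principal obstacle is the ``defining-characteristic analysis for $p\in\{2,3\}$'', with $H$ the unipotent radical of a Borel and the difficulty arising from overlaps along the root system and the Bruhat decomposition. But in part~(ii) of the theorem, $p=2$ and $q$ is odd ($q=9$ or a Mersenne or Fermat prime), so $p$ is \emph{not} the defining characteristic: the Sylow $2$-subgroup has nothing to do with unipotent radicals or root subgroups, and sits instead inside normalisers of tori (compare Proposition~\ref{p:syl} and the proofs in Section~\ref{ss:class}). The defining-characteristic $p=2$ cases (those over $\mathbb{F}_2$ in Table~\ref{tab:zen}) are handled separately by Zenkov and already appear in part~(i). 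So the mechanism you propose for why the Mersenne/Fermat/$q=9$ condition arises is not the correct one; the actual reason, visible in Zenkov's centraliser analysis and in the Sylow orders of Table~\ref{tab:syl}, is that $(q^2-1)_2$ is maximised precisely for these $q$.
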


\begin{table}
\[
\begin{array}{lll} \hline
p & G & \mbox{Conditions} \\ \hline
3 & {\rm P\O}_{8}^{+}(3).X & X \in \{C_3,S_3\} \\
2 & {\rm L}_2(9).2^2 & \\
& {\rm L}_2(q).2 & \mbox{$q \geqs 7$ is a Mersenne prime} \\
& {\rm L}_3(4).X & \mbox{$X \in \{C_2,C_2^2\}$, $\gamma \in G$} \\
& {\rm L}_n(2).2 & n \geqs 4 \\
& \O_{n}^{+}(2).2 & n \geqs 8 \\
& F_4(2).2 & \\
& E_6(2).2 & \\ \hline
\end{array}
\]
\caption{The almost simple groups $G$ with $b(G,H) = 3$ for $H \in {\rm Syl}_p(G)$}
\label{tab:zen}
\end{table}

\begin{remk}
Note that in Table \ref{tab:zen} we have not listed the groups ${\rm L}_3(2).2$ and $S_8$ with $p=2$. This is simply to avoid a repetition of cases, noting that ${\rm L}_3(2).2 \cong {\rm L}_2(7).2$ and $S_8 \cong {\rm L}_4(2).2$. And in the row for $G = {\rm L}_3(4).X$ we write `$\gamma \in G$' to record the fact that $G$ must contain an involutory graph automorphism of ${\rm L}_3(4)$.
\end{remk}

In fact, Theorem \ref{t:zenkov} is a special case of a more general result (see \cite[Theorem B]{Z_92p}) concerning the structure of an arbitrary finite group $G$ with a Sylow $p$-subgroup $H$ such that $H \cap H^x \ne O_p(G)$ for all $x \in G$. We refer the reader to Remark \ref{r:zenkov} for a brief overview of the main steps in the proof of Zenkov's result in the almost simple setting.

Some of the open cases arising in part (ii) of Theorem \ref{t:zenkov} have been handled in more recent work. More specifically, the groups with socle ${\rm L}_2(q)$ are treated in \cite[Lemma 3.18]{Z_92p} (in fact, the main theorem of \cite{Z_L2} determines all the pairs of nilpotent subgroups $A,B$ with $A \cap B^x \ne 1$ for all $x \in G$), and those with socle ${\rm L}_3(q)$ or ${\rm U}_3(q)$ are dealt with in \cite{Z_dim3}. In addition, the main theorem of \cite{Z_ex3} handles all the almost simple exceptional groups of Lie type over $\mathbb{F}_3$.

Our main result is Theorem \ref{t:main1} below. This shows that $b(G,H) = 2$ for all of the open cases in part (ii) of Theorem \ref{t:zenkov}, thereby completing the classification of the almost simple groups $G$ with a Sylow $p$-subgroup $H$ such that $H \cap H^x \ne 1$ for all $x \in G$. In addition, Theorem \ref{t:main1} resolves Problem 20.121 in the \emph{Kourovka Notebook} \cite{Kou}.

\begin{theoremm}\label{t:main1}
Let $G$ be an almost simple group and let $H$ be a Sylow $p$-subgroup of $G$, where $p$ is a prime divisor of $|G|$. Then $b(G,H) \leqs 3$, with equality if and only if 
$(p,G)$ is one of the cases recorded in Table \ref{tab:zen}.
\end{theoremm}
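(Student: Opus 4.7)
By Theorem \ref{t:zenkov} it suffices to prove that $b(G,H) = 2$ in every case falling under part (ii) of that theorem but not already recorded in Table \ref{tab:zen}. Appealing to \cite{Z_L2, Z_dim3, Z_ex3}, I may additionally assume that $G_0 \notin \{\mathrm{L}_2(q), \mathrm{L}_3(q), \mathrm{U}_3(q)\}$ and, when $G_0$ is exceptional, that $q \neq 3$. My plan is to establish $b(G,H) = 2$ via the probabilistic method for base sizes.

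Set $\Omega = G/H$. A standard argument shows that $b(G,H) = 2$ as soon as
\[
\widehat{Q}(G,H) \;:=\; \sum_{(y)} \frac{|y^G \cap H|^2}{|y^G|} \;<\; 1,
\]
where the sum runs over a set of representatives of the $G$-conjugacy classes of non-identity $2$-elements meeting $H$. I will estimate $\widehat{Q}(G,H)$ by combining the now standard fixed point ratio bounds for almost simple groups of Lie type (due to Liebeck--Saxl, Guralnick--Lawther and the first author and collaborators) with explicit counts of the $2$-elements of each type lying in a Sylow $2$-subgroup of $G$. The latter count is well understood: since $q$ is odd, the inner involutions of $G$ are semisimple and their number inside $H$ is controlled by the structure of a Sylow $2$-torus, while higher $2$-power elements and any outer field, graph or graph-field involutions can be treated case by case from standard centraliser data.

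The argument then proceeds by Lie type. For the classical families $\mathrm{L}_n, \mathrm{U}_n, \mathrm{PSp}_n, \mathrm{P\Omega}_n^{\e}$ the fixed point ratios decay like $q^{-cn}$ with some $c > 0$, while the number of $2$-elements in $H$ grows only polynomially in $n$, so $\widehat{Q}(G,H) \to 0$ as $n \to \infty$ with $q$ fixed and small. For the exceptional families $G_2, {}^3D_4, F_4, E_6, {}^2E_6, E_7, E_8$ the list of types is short and the necessary class and centraliser data is fully tabulated, so the same estimate can be carried out uniformly. In both settings this reduces Theorem \ref{t:main1} to a bounded collection of small-rank configurations over $q \in \{5,7,9,17,\ldots\}$.

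The principal difficulty, as is typical for arguments of this kind, will lie in this residual list, where the crude probabilistic bound exceeds $1$. For these cases I plan to refine the estimate by using exact class counts inside a fixed Sylow $2$-subgroup, and where the refined bound still fails, to produce an explicit $x \in G$ with $H \cap H^x = 1$ by direct computation in \textsc{Magma}, which is feasible since both the rank of $G_0$ and $q$ are small. An additional technical point is that each almost simple extension of $G_0$ must be analysed separately: different extensions give rise to different Sylow $2$-subgroups and different lists of contributing conjugacy classes, and for $q$ a small Mersenne or Fermat prime the outer $2$-elements can contribute disproportionately large terms that require individual attention.
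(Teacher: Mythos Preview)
Your overall strategy matches the paper's: reduce via Zenkov's theorem and the known low-rank results, then establish $b(G,H)=2$ in the remaining cases by showing $\widehat{Q}(G,H)<1$, with finitely many small configurations handled in \textsc{Magma}. However, the sketch of the main estimate contains a genuine gap.

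The assertion that ``the number of $2$-elements in $H$ grows only polynomially in $n$'' is false: for a classical group of rank $m$ over $\mathbb{F}_q$ with $q$ odd one has $|H|$ of order $(q^2-1)_2^{m}(m!)_2$, which is exponential in $m$. Consequently, pairing the crude bound $|x^G\cap H|\leqs |H|$ with generic fixed-point-ratio bounds does \emph{not} force $\widehat{Q}(G,H)<1$: the smallest involution classes (type $t_1$, and for orthogonal and even-dimensional linear/unitary groups the $\gamma_1$ graph automorphisms) have $|x^G|$ only of order $q^{O(n)}$, and their contribution dominates. The paper's mechanism for controlling these terms is to embed $H$ in an explicit overgroup $L$ of wreath-product shape, namely ${\rm GL}_1(q)\wr S_n$, ${\rm GL}_2(q)\wr S_m$, ${\rm GU}_1(q)\wr S_n$, ${\rm GU}_2(q)\wr S_m$, ${\rm Sp}_2(q)\wr S_m$ or ${\rm O}_2^{\pm}(q)\wr S_m$ according to $G_0$ and the residue of $q$ modulo $4$, and then to count the $t_1$- (and when necessary $t_2$-) involutions directly in $L$; those counts \emph{are} polynomial in $n$, and this is what makes the estimate go through. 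Your proposal does not identify this step, and the appeal to ``the structure of a Sylow $2$-torus'' together with off-the-shelf fixed-point-ratio bounds will not substitute for it. A minor additional point: since $H$ is a $2$-group, only involutions contribute to $\widehat{Q}(G,H)$, so summing over all non-identity $2$-elements is unnecessary and only weakens your bound.
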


In view of Theorem \ref{t:zenkov}, we may assume $G$ is an almost simple group of Lie type over $\mathbb{F}_q$ with socle $G_0$, and either $q=9$ or $q$ is a Mersenne or Fermat prime. In addition, we may assume $H$ is a Sylow $2$-subgroup and $G_0 \ne {\rm L}_2(q)$, ${\rm L}_{3}(q)$ or ${\rm U}_3(q)$. And in view of the main theorem in \cite{ZM} for simple groups, we can also assume that $|G:G_0|$ is even. As discussed above, we may view $G \leqs {\rm Sym}(\O)$ as a transitive permutation group on $\O = G/H$ and our goal is to show that $b(G,H) = 2$. 

To prove Theorem \ref{t:main1}, we adopt a completely different approach to the problem, utilising a powerful probabilistic technique first introduced by Liebeck and Shalev \cite{LSh99}. This method has been extensively applied in recent years for studying bases for almost simple primitive permutation groups. 

This approach relies on the elementary observation that if $Q(G,H)$ is the probability that a uniformly random pair of points in $\O$ do not form a base for $G$, then
\[
Q(G,H) \leqs \sum_{i=1}^k |x_i^G| \cdot {\rm fpr}(x_i,G/H)^2 =: \widehat{Q}(G,H),
\]
where $\{x_1, \ldots, x_k\}$ is a complete set of representatives of the conjugacy classes in $G$ of elements of prime order, and 
\[
{\rm fpr}(x_i,G/H) = \frac{|x_i^G \cap H|}{|x_i^G|}
\] 
is the fixed point ratio of $x_i$ on $G/H$, which is simply the proportion of cosets in $G/H$ fixed by $x_i$. In particular, if $\widehat{Q}(G,H)<1$ then $b(G,H) = 2$. And of course, in the setting we are interested in, with $H$ a Sylow $2$-subgroup of $G$, we only need to consider fixed point ratios for involutions. 

The main challenge in applying this method involves bounding the number of involutions in $H$ that are contained in `small' conjugacy classes in $G$. To do this, it is often helpful to identify an appropriate subgroup $L<G$ containing $H$, where it may be easier to bound the number of involutions in $L$ of a given type, rather than in $H$ directly. For the remaining classes $x_i^G$ of involutions, we can often work with the trivial upper bound $|x_i^G \cap H| < |H|$, which greatly simplifies the analysis.

\vs

By combining Theorem \ref{t:main1} with more recent work of Zenkov \cite{Z_ln2, Z_omega,Z_E6,Z_L2,Z_F4,Z_odd} we immediately obtain Corollary \ref{c:main1} below, which gives a precise description of the triples $(G,A,B)$, where $G$ is almost simple and $A,B$ are primary subgroups (that is to say, $A$ and $B$ have prime power order) with the property that $A \cap B^x \ne 1$ for all $x \in G$. Note that the subgroup $K$ defined in part (ii) is denoted by ${\rm min}_G(H,H)$ in Zenkov's papers.

\begin{corr}\label{c:main1}
Let $G$ be an almost simple group and let $A$ and $B$ be primary subgroups of $G$. Then $A \cap B^x \ne 1$ for all $x \in G$ if and only if $(p,G)$ is one of the cases in Table \ref{tab:zen}, and $A,B$ are $p$-subgroups such that  
\begin{itemize}\addtolength{\itemsep}{0.2\baselineskip}
\item[{\rm (i)}] $A$ and $B$ are both Sylow $p$-subgroups of $G$; or

\item[{\rm (ii)}] $(A,B)$ is recorded in Table \ref{tab:primary}, up to conjugacy and ordering, where $H$ is a Sylow $p$-subgroup of $G$ and 
\[
K = \la H \cap H^x \,:\, x\in G, \, |H \cap H^x| = m\ra
\]
with $m = \min\{|H \cap H^x| \,:\, x \in G\}$.
\end{itemize} 
\end{corr}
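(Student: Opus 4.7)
The plan is to combine Theorem \ref{t:main1} with the recent case-by-case classifications of Zenkov \cite{Z_ln2,Z_omega,Z_E6,Z_L2,Z_F4,Z_odd}. The starting point is the observation that the primary subgroups $A$ and $B$ must in fact be $p$-subgroups for a common prime $p$: if $|A|$ is a power of $p$ and $|B|$ is a power of $q$ with $p \ne q$, then $A \cap B^x$ is simultaneously a $p$-group and a $q$-group for every $x \in G$, so is trivial.

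Assume now that $A$ and $B$ are both $p$-subgroups and fix a Sylow $p$-subgroup $H$ of $G$. After replacing $A$ and $B$ by suitable $G$-conjugates we may take $A, B \leqs H$. If $(p,G)$ does not appear in Table \ref{tab:zen}, then Theorem \ref{t:main1} gives $b(G,H) = 2$, so some $x \in G$ satisfies $H \cap H^x = 1$; since $A \leqs H$ and $B^x \leqs H^x$, this forces $A \cap B^x = 1$, contradicting the hypothesis. Thus $(p,G)$ must appear in Table \ref{tab:zen}.

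Conversely, for pairs $(p,G)$ in Table \ref{tab:zen}, case (i) covers the situation where $A$ and $B$ are both Sylow $p$-subgroups: writing $A = H^u$ and $B = H^v$, the intersection $A \cap B^x$ is conjugate to $H \cap H^{vxu^{-1}}$, and as $x$ ranges over $G$ so does $y = vxu^{-1}$. Hence $A \cap B^x \ne 1$ for all $x$ is equivalent to $H \cap H^y \ne 1$ for all $y$, which by Theorem \ref{t:main1} holds exactly when $(p,G)$ lies in Table \ref{tab:zen}. For pairs $(A,B)$ of $p$-subgroups with not both of $A,B$ Sylow, the complete classification of those satisfying $A \cap B^x \ne 1$ for all $x$ is carried out family by family in the cited papers of Zenkov, with the answer expressed in terms of the minimal intersection subgroup $K$; these pairs are precisely the entries of Table \ref{tab:primary}. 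The only genuinely new input here is Theorem \ref{t:main1} itself, which eliminates the open cases of part (ii) of Theorem \ref{t:zenkov} and thereby allows Zenkov's family-by-family results to be assembled into the unconditional statement of the corollary.
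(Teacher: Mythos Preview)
Your argument is correct and matches the paper's approach exactly: the paper does not give a formal proof of this corollary, but simply states that it follows by combining Theorem~\ref{t:main1} with Zenkov's case-by-case classifications \cite{Z_ln2,Z_omega,Z_E6,Z_L2,Z_F4,Z_odd}, which is precisely what you have spelled out. Your added observations (that $A$ and $B$ must be $p$-groups for the same prime, and the reduction to $A,B \leqs H$ via conjugation) are the natural details one would fill in, and they are sound.
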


\begin{table}
\[
\begin{array}{lllll} \hline
p & G & (A,B) & \mbox{Conditions} & \mbox{Reference}  \\ \hline
3 & {\rm P\O}_{8}^{+}(3).X & K \leqs A,B \leqs H & X \in \{C_3,S_3\} & \mbox{\cite[Theorem 1]{Z_odd}} \\
2 & {\rm L}_2(9).2^2 & (H,K) & & \mbox{\cite[Theorem 2]{Z_L2}} \\
& {\rm L}_3(4).X & \mbox{See Remark \ref{r:primary}(a)} & \mbox{$X \in \{C_2,C_2^2\}$, $\gamma \in G$} & \mbox{\cite[Theorem 1]{Z_ln2}} \\
& {\rm L}_n(2).2 & K \leqs A,B \leqs H & \mbox{$n \geqs 4$ even} & \mbox{\cite[Theorem 1]{Z_ln2}} \\
& \O_{n}^{+}(2).2 & K \leqs A,B \leqs H & n \geqs 8 & \mbox{\cite[Theorem 1]{Z_omega}} \\
& F_4(2).2 & (H,K) & & \mbox{\cite[Theorem 2]{Z_F4}} \\
& E_6(2).2 & K \leqs A,B \leqs H & & \mbox{\cite[Theorem 5]{Z_E6}} \\ \hline
\end{array}
\]
\caption{The pairs $(A,B)$ in part (ii) of Corollary \ref{c:main1}}
\label{tab:primary}
\end{table}

\begin{remk}\label{r:primary}
Some comments on the statement of Corollary \ref{c:main1} are in order.
\begin{itemize}\addtolength{\itemsep}{0.2\baselineskip}
\item[{\rm (a)}] Suppose $p=2$ and $G_0 = {\rm L}_3(4)$ as in Table \ref{tab:primary}. Here $G = G_0.2$ or $G_0.2^2$ contains an involutory graph automorphism $\gamma$ and we set $G_1 = G_0.\la \gamma \ra$. Then according to \cite[Theorem 1]{Z_ln2}, if $A$ and $B$ are a pair of primary subgroups of $G$, then  $A \cap B^x \ne 1$ for all $x \in G$ if and only if $K < A \cap B \cap G_1$ and either $L \leqs A$ or $L \leqs B$, where $L = \la A \cap B^x \,: \, x \in \mathcal{M}\ra$ and $\mathcal{M}$ is the set of elements $x \in G$ such that $A \cap B^x$ is inclusion-minimal in the set $\{ A \cap B^g \,:\, g \in G\}$. It is straightforward to construct all of the relevant pairs $(A,B)$ using {\sc Magma} \cite{magma}.

\item[{\rm (b)}] In each of the remaining cases, the description of the relevant pairs $(A,B)$ is easier to state and it just depends on the subgroup $K$ of $H$ defined in part (ii) of the corollary. This subgroup $K$ is given in the relevant reference listed in the final column of Table \ref{tab:primary}, but for the reader's convenience we provide a brief description here:

\vspace{2mm}

\begin{itemize}\addtolength{\itemsep}{0.2\baselineskip}
\item $G = {\rm P\O}_8^{+}(3).X$: $K = O_3(N_G(P))$, where $P = P_{1,3,4}$ is a parabolic subgroup of $G_0$ corresponding to the central node in the Dynkin diagram of type $D_4$. We note that $|H:K| = 3$.

\item $G = {\rm L}_2(9).2^2$: $K = D_{16}$ and $|H:K| = 2$.

\item $G = {\rm L}_n(2).2$, $n \geqs 4$ even: $K = O_2(N_G(P))$, where $P$ is a parabolic subgroup of $G_0$ corresponding to the central node of the Dynkin diagram of type $A_{n-1}$. Here $P = [2^a].{\rm L}_2(2)$ with $a = n(n-1)/2-1$ and thus $|H:K| = 2$.

\item $G = \O_n^{+}(2).2 = {\rm O}_n^{+}(2)$, $n \geqs 8$: $K = O_2(P)$ is the unipotent radical of a maximal parabolic subgroup $P$ of $G$. In the notation of \cite{KL}, we have $P = P_{k}$ with $k = n/2-1$, which we can view as the stabiliser in $G$ of a $k$-dimensional totally singular subspace of the natural module for $G$. In particular, $|H:K| = 2^a$ with $a = (n^2-6n+8)/8$.

\item $G = F_4(2).2$: $K = O_2(P).D_{16}$, where $P = P_{1,4}$ is a maximal parabolic subgroup of $G$ with Levi subgroup of type $B_2$. Note that $|H:K| = 2$.

\item $G = E_6(2).2$: $K = O_2(P).(O_2(Q).2)$, where $O_2(P)$ is the unipotent radical of a maximal parabolic subgroup $P =[2^{24}].{\rm O}_8^{+}(2)$ of $G$ of type $P_{1,6}$, and $O_2(Q)$ is the unipotent radical of a maximal parabolic subgroup $Q = 2^{3+6}.{\rm L}_3(2)$  of $\O_8^{+}(2)$ of type $P_1$. In particular, $|O_2(P)| = 2^{24}$ and $|O_2(Q)| = 2^9$, whence $|H:K| = 2^3$.
\end{itemize}

\vspace{1mm}

\item[{\rm (c)}] Note that if $G = {\rm L}_n(2).2$ with $n \geqs 5$ odd, then \cite[Theorem 1]{Z_ln2} implies that $A \cap B^x \ne 1$ for all $x \in G$ if and only if $A$ and $B$ are both Sylow $2$-subgroups of $G$. This explains why $n$ is even for $G = {\rm L}_n(2).2$ in Table \ref{tab:primary}. And by \cite[Theorem 2]{Z_L2}, the same conclusion holds when 
$G = {\rm L}_2(q).2$ and $q \geqs 7$ is a Mersenne prime.
\end{itemize}
\end{remk}

In order to conclude the introduction, we present several remarks describing some related open problems.

\begin{remk}\label{r:ls}
Let $G$ be a finite group and let $P_i$ be a Sylow $p_i$-subgroup of $G$, where $p_1, \ldots, p_n$ are the distinct prime divisors of $|G|$. In a recent preprint \cite{LS}, Lisi and Sabatini conjecture that there exists an element $x \in G$ such that for each $i$, $P_i \cap P_i^x$ is inclusion-minimal in the set $\{P_i \cap P_i^g \,:\, g \in G\}$. So for a simple group $G$, in view of the main theorem of \cite{ZM}, this conjecture asserts that there is an element $x \in G$ such that $P_i \cap P_i^x = 1$ for all $i$. In particular, if $H$ is a nilpotent subgroup of $G$ (with $G$ simple), then $H \cap H^x = 1$ and so \cite[Conjecture A]{LS} immediately implies Conjecture \ref{c:nilp}. The Lisi-Sabatini conjecture is proved in \cite{LS} for all metanilpotent groups of odd order, as well as all sufficiently large symmetric and alternating groups. The latter asymptotic result relies on a probabilistic argument, which applies recent work of Diaconis et al. \cite{D25} and Eberhard \cite{Eb}. 
\end{remk}

\begin{remk}\label{r:ab}
Let $G$ be a finite non-abelian simple group and let $H$ be a nilpotent subgroup. Recall that Conjecture \ref{c:nilp} asserts that $H \cap H^x = 1$ for some $x \in G$. In fact, in several special cases, it has been shown that if $A$ and $B$ are any nilpotent subgroups of $G$, then $A \cap B^x = 1$ for some $x \in G$. This is proved in 
\cite{Z_spor,Z_sym} for alternating and sporadic groups, and we refer the reader to
\cite{Z_dim3,Z_L2} for the low-rank Lie type groups ${\rm L}_2(q), {\rm L}_3(q)$ and ${\rm U}_3(q)$. Given these results, it seems reasonable to speculate that this property holds for all simple groups, which can be viewed as a natural generalisation of Conjecture \ref{c:nilp}.
\end{remk}

\begin{remk}\label{r:depth}
Let $G$ be a finite group and let $H$ be a proper subgroup. In 2011, Boltje, Danz and K\"{u}lshammer \cite{BDK} introduced and studied the \emph{depth} of $H$, denoted $d_G(H)$, which is a positive integer defined in terms of the inclusion of complex group algebras $\mathbb{C}H \subseteq \mathbb{C}G$ (we refer the reader to \cite{BDK} for the formal definition). This notion has been the focus of numerous papers and there  has been a particular interest in studying the depth of subgroups of simple and almost simple groups (see \cite{B25} and the references therein). In this setting, if $G$ is almost simple and $H \ne 1$ is core-free, then we have 
\begin{equation}\label{e:dep}
3 \leqs d_G(H) \leqs 2b(G,H) -1
\end{equation}
(see \cite[Propositions 2.2 and 2.6]{B25}, for example). In particular, if $b(G,H) = 2$ then $d_G(H) = 3$, so the main theorem of \cite{ZM} implies that every (non-trivial) Sylow subgroup of a simple group has depth $3$ (and Conjecture \ref{c:nilp} asserts that every non-trivial nilpotent subgroup has depth $3$). In view of Theorem \ref{t:main1}, in order to determine the depth of every Sylow subgroup of an almost simple group, it just remains to consider the pairs $(G,H)$ recorded in Table \ref{tab:zen}. Since $b(G,H) = 3$, the bounds in \eqref{e:dep} imply that $d_G(H) \in \{3,4,5\}$ and further work is needed in order to determine the precise depth of $H$:

\begin{itemize}\addtolength{\itemsep}{0.2\baselineskip}
\item[{\rm (a)}] As noted in \cite[Remark 8(b)]{B25}, if $G = {\rm PGL}_2(q)$ and $H$ is a Sylow $2$-subgroup of $G$, where $q$ is a Mersenne prime, then $d_G(H) = 5$. 
\item[{\rm (b)}] The relevant groups with socle ${\rm P\O}_8^{+}(3)$, ${\rm L}_4(2)$, ${\rm L}_3(4)$ and ${\rm L}_2(9)$ can be handled using {\sc Magma} and the character-theoretic approach for calculating $d_G(H)$, which is explained in \cite[Section 2.1.1]{B25}. In every case, we get $d_G(H) = 5$.
\item[{\rm (c)}] It remains an open problem to determine $d_G(H)$ for the remaining cases in Table \ref{tab:zen}, where $p=2$ and $G = F_4(2).2$, $E_6(2).2$, ${\rm L}_n(2).2$ or ${\rm O}_{n}^{+}(2)$. We have used {\sc Magma} to check that $d_G(H) = 5$ for $G = {\rm L}_n(2).2$ with $3 \leqs n \leqs 7$, and for $G = {\rm O}_n^{+}(2)$ with $n = 8$ or $10$. 
\end{itemize}

\noindent So there is some evidence to suggest that if $G$ is almost simple and $H$ is a Sylow $p$-subgroup of $G$, where $p$ is a prime divisor of $|G|$, then either $d_G(H) = 3$, or $(G,H)$ is one of the cases in Table \ref{tab:zen} and $d_G(H) = 5$.
\end{remk}

\begin{remk}\label{r:sylow}
Let $G$ be an almost simple group and set $N = N_G(H)$, where $H$ is a Sylow $p$-subgroup of $G$ and $p$ is a prime divisor of $|G|$. Then we can identify $G/N$ with the set of Sylow $p$-subgroups of $G$ and we can think of $b(G,N)$ as the base size with respect to the natural transitive action of $G$ by conjugation on the set of Sylow $p$-subgroups of $G$. Of course, if $N = H$, then $b(G,N)$ is given in Theorem \ref{t:main1}. However, it remains an open problem to determine $b(G,N)$ when $N \ne H$. Here we speculate that the upper bound $b(G,N) \leqs 4$ is best possible, noting that equality holds when $G = S_6$ and $p=3$ (in which case, $N = S_3 \wr S_2$ is a maximal subgroup of $G$). Let us also observe that there are examples where $b(G,N) > 2$ and $p$ is arbitrarily large (in contrast to the situation for $b(G,H)$, where Theorem \ref{t:zenkov} shows that $b(G,H) >2$ only if $p=2$ or $3$). For example, if $G = {\rm L}_2(p)$ with $p \geqs 5$, then $N$ is a Borel subgroup of $G$ and we have $b(G,N) = 3$ (see \cite[Proposition 4.1]{Bur21}).
\end{remk}

\vs

\noindent \textbf{Note added in press.} In a recent preprint \cite{BH2}, we have used a probabilistic approach to prove the Lisi-Sabatini conjecture  for all non-alternating simple groups (see Remark \ref{r:ls}). And by combining the main result in \cite{BH2} with earlier work of Kurmazov \cite{Kurm}, we are able to complete the proof of Vdovin's conjecture on nilpotent subgroups of simple groups (see Conjecture \ref{c:nilp}). In fact, we have established the stronger form of the conjecture alluded to in Remark \ref{r:ab}: if $G$ is simple and $A,B$ are nilpotent subgroups of $G$, then $A \cap B^x = 1$ for some $x \in G$.

\vs

\noindent \textbf{Notation.} For a finite group $G$ and positive integer $n$, we write $C_n$, or just $n$, for a cyclic group of order $n$ and $G^n$ for the direct product of $n$ copies of $G$. An unspecified extension of $G$ by a group $H$ will be denoted by $G.H$; if the extension splits then we may write $G{:}H$. If $X$ is a subset of $G$, then we will write $i_2(X)$ for the number of involutions in $X$. And if $p$ is a prime, then ${\rm Syl}_p(G)$ is the set of Sylow $p$-subgroups of $G$. We will write $[n]$ for an unspecified soluble group of order $n$. Throughout the paper, we adopt the standard notation for simple groups of Lie type from \cite{KL} and we write $(a,b)$ for the highest common factor of the positive integers $a$ and $b$.

\vs

\noindent \textbf{Acknowledgements.} The second author thanks the London Mathematical Society for their support as an LMS Early Career Research Fellow at the University of St Andrews.

\section{Preliminaries}\label{s:prel}

In this section we record several preliminary results, which will be needed in the proof of Theorem \ref{t:main1}. We begin by briefly recalling the probabilistic method for bounding the base size of a finite permutation group, which is at the heart of our proof.

\subsection{Bases}\label{ss:bases}

Let $G \leqs {\rm Sym}(\O)$ be a transitive permutation group on a finite set $\O$ with point stabiliser $H \ne 1$. Recall that a subset $B$ of $\O$ is a \emph{base} for $G$ if the pointwise stabiliser of $B$ in $G$ is trivial. We write $b(G,H)$ for the minimal size of a base, which we refer to as the \emph{base size} of $G$. Notice that this coincides with the minimal number of conjugates of $H$ with trivial intersection.

Next let 
\begin{equation}\label{e:Q}
Q(G,H) = \frac{|\{(\a,\b) \in \O^2 \,:\, G_{\a} \cap G_{\b} \ne 1\}|}{|\O|^2}
\end{equation}
be the probability that a random pair of points in $\O$ is not a base for $G$, which means that $b(G,H) = 2$ if and only if $Q(G,H) < 1$. Then as explained in the proof of \cite[Theorem 1.3]{LSh99} (see \cite[Section 2]{LSh99}), we have 
\begin{equation}\label{e:Qhat}
Q(G,H) \leqs \widehat{Q}(G,H) = \sum_{i=1}^k|x_i^G| \cdot \left(\frac{|x_i^G \cap H|}{|x_i^G|}\right)^2,
\end{equation}
where $x_1, \ldots, x_k$ is a complete set of representatives of the conjugacy classes in $G$ of elements of prime order. The following result is an immediate consequence.

\begin{lem}\label{l:prob}
If $\widehat{Q}(G,H) < 1$ then $b(G,H) = 2$.
\end{lem}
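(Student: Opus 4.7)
The plan is to unwind the definitions: the statement is an immediate consequence of the bound $Q(G,H) \leqs \widehat{Q}(G,H)$ displayed just before the lemma, together with the elementary equivalence $b(G,H) = 2 \iff Q(G,H) < 1$. So there are only two small ingredients to verify.

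First, I would observe that $Q(G,H) < 1$ if and only if there exists some pair $(\alpha, \beta) \in \Omega^2$ with $G_\alpha \cap G_\beta = 1$, which is precisely the statement that $\{\alpha,\beta\}$ is a base of size two. Since $H \ne 1$ forces $b(G,H) \geqs 2$, this gives $b(G,H) = 2$ if and only if $Q(G,H) < 1$.

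Next, I would justify the bound \eqref{e:Qhat}. A pair $(\alpha, \beta) \in \Omega^2$ with $G_\alpha \cap G_\beta \ne 1$ must contain an element of prime order in its intersection stabiliser, obtained by taking a suitable power of any nontrivial element there. Hence the number of such bad pairs is at most $\sum_i |x_i^G| \cdot |\mathrm{Fix}(x_i)|^2$, summing over a set of prime-order class representatives $x_1,\ldots,x_k$ and observing that each such $x_i$ contributes pairs $(\alpha,\beta)$ with $\alpha, \beta$ both fixed. Dividing by $|\Omega|^2$ and applying the standard transitive-action identity
\[
\frac{|\mathrm{Fix}(x_i)|}{|\Omega|} = \frac{|x_i^G \cap H|}{|x_i^G|}
\]
(a consequence of Burnside counting applied to the action of the centraliser $C_G(x_i)$) reproduces exactly $\widehat{Q}(G,H)$. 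Assembling these ingredients, if $\widehat{Q}(G,H) < 1$ then $Q(G,H) < 1$, hence $b(G,H) = 2$.

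There is no genuine obstacle: the lemma is the standard Liebeck--Shalev observation from \cite{LSh99}, which the authors explicitly cite in the paragraph preceding the statement. The one mildly subtle point is the reduction from an arbitrary nontrivial element in $G_\alpha \cap G_\beta$ to an element of prime order, and this is handled by passing to an appropriate power.
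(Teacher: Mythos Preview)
Your proposal is correct and matches the paper's approach exactly: the paper states that the lemma is immediate from the preceding inequality $Q(G,H) \leqs \widehat{Q}(G,H)$ in \eqref{e:Qhat} and the observation that $b(G,H)=2$ if and only if $Q(G,H)<1$, and you have simply spelled out those two ingredients. The additional justification you give for \eqref{e:Qhat} is fine but not strictly required, since the paper already establishes that bound (with a citation to \cite{LSh99}) before stating the lemma.
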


\begin{rem}
In the proof of Theorem \ref{t:main1}, we are interested in the case where $H$ is a Sylow $2$-subgroup of $G$. Since $|x^G \cap H| = 0$ for all $x \in G$ of odd order, we have    
\[
\widehat{Q}(G,H) = \sum_{i=1}^{\ell}|y_i^G| \cdot \left(\frac{|y_i^G \cap H|}{|y_i^G|}\right)^2,
\]
where $y_1, \ldots, y_{\ell}$ is a set of representatives of the conjugacy classes of involutions in $G$.
\end{rem}

The next elementary lemma is useful for obtaining an upper bound on 
$\widehat{Q}(G,H)$.

\begin{lem}\label{l:est}
Suppose $x_1, \ldots, x_m$ represent distinct $G$-classes such that $\sum_i |x_i^G \cap H| \leqs a$ and $|x_i^G| \geqs b$ for all $i$. Then
\[
\sum_{i=1}^m|x_i^G| \cdot \left(\frac{|x_i^G \cap H|}{|x_i^G|}\right)^2 \leqs a^2b^{-1}.
\]
\end{lem}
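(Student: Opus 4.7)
The proof is a short calculation. Writing $n_i = |x_i^G \cap H|$ and $c_i = |x_i^G|$, the hypotheses become $\sum_{i=1}^m n_i \leqs a$ and $c_i \geqs b$, and the quantity to bound is $\sum_{i=1}^m n_i^2/c_i$.

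First I would apply the lower bound on $c_i$ to replace each denominator: since $c_i \geqs b > 0$, we have $n_i^2/c_i \leqs n_i^2/b$, and so
\[
\sum_{i=1}^m \frac{n_i^2}{c_i} \leqs \frac{1}{b}\sum_{i=1}^m n_i^2.
\]
Then I would bound $\sum_i n_i^2$ by $(\sum_i n_i)^2$, which is valid because each $n_i$ is a non-negative integer (the cross terms $2n_in_j$ in $(\sum n_i)^2$ are all non-negative). Combining with $\sum_i n_i \leqs a$ gives $\sum_i n_i^2 \leqs a^2$, and the claimed bound $a^2 b^{-1}$ follows immediately.

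There is no real obstacle here; the lemma is essentially a repackaging of the elementary inequality $\sum n_i^2 \leqs (\sum n_i)^2$ for non-negative reals, combined with the pointwise estimate $1/c_i \leqs 1/b$. The whole argument is two lines and requires nothing beyond the stated hypotheses.
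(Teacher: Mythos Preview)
Your proof is correct and is exactly the standard elementary argument; the paper itself does not write out a proof but simply cites \cite[Lemma 2.1]{B07}, where the same two-line computation appears.
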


\begin{proof}
See \cite[Lemma 2.1]{B07}.
\end{proof}

\subsection{Sylow subgroups}\label{ss:sylow}

We will need information of the orders of Sylow $2$-subgroups in almost simple groups of Lie type in odd characteristic. To this end, the following number-theoretic result will be useful. Here and throughout the paper, we write $(m)_2$ for the largest power of $2$ dividing the positive integer $m$ (so for example, $(24)_2 = 8$). 

\begin{lem}\label{l:nt}
Let $q$ be an odd prime power, let $d$ be a positive integer and let $\e \in \{-1,1\}$. Then
\[
(q^d -\e)_2 = \left\{\begin{array}{ll}
(q -\e)_2 & \mbox{if $d$ is odd} \\
(q^2-1)_2(d/2)_2 & \mbox{if $d$ is even and $\e = 1$} \\ 
2 & \mbox{otherwise.}
\end{array}\right.
\] 
\end{lem}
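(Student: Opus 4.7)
The plan is to handle the three cases separately, using an appropriate factorisation of $q^d-\varepsilon$ and then comparing $2$-adic valuations directly. The key input I will rely on is that since $q$ is odd, $q^2-1=(q-1)(q+1)$ is divisible by $8$, and more strongly $q^{2^k}\equiv 1\pmod{8}$ for every $k\geqs 1$, so $(q^{2^k}+1)_2=2$ whenever $k\geqs 1$.

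First I would dispatch the odd $d$ case. Writing
\[
q^d-\varepsilon=(q-\varepsilon)\bigl(q^{d-1}+\varepsilon q^{d-2}+\varepsilon^{2}q^{d-3}+\cdots+\varepsilon^{d-1}\bigr),
\]
the second factor is a sum of $d$ terms each congruent to $1$ modulo $2$ (since $q$ and $\varepsilon$ are odd and signs are irrelevant mod $2$), so the sum is odd. Hence $(q^d-\varepsilon)_2=(q-\varepsilon)_2$, which is the first line of the table. The case $d$ even and $\varepsilon=-1$ is even quicker: $q^d$ is a square of an odd integer, so $q^d\equiv 1\pmod 8$ and therefore $q^d+1\equiv 2\pmod 8$, giving $(q^d+1)_2=2$.

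The interesting case is $d$ even with $\varepsilon=1$, which will be the main technical step. Write $d=2^{a}m$ with $m$ odd and $a\geqs 1$, and set $r=q^{m}$. Iterating the factorisation $x^{2}-1=(x-1)(x+1)$ gives
\[
q^{d}-1=r^{2^{a}}-1=(r-1)(r+1)(r^{2}+1)(r^{4}+1)\cdots(r^{2^{a-1}}+1).
\]
The opening remark gives $(r^{2^{k}}+1)_2=2$ for $1\leqs k\leqs a-1$, contributing a total factor of $2^{a-1}$. The odd-$d$ case applied to $r=q^{m}$ yields $(r-1)_2=(q-1)_2$ and $(r+1)_2=(q+1)_2$, so
\[
(q^{d}-1)_2=(q-1)_2(q+1)_2\cdot 2^{a-1}=(q^{2}-1)_2\cdot (d/2)_2,
\]
as required, since $(d/2)_2=2^{a-1}$. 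The minor case $a=1$ (where the chain of $(r^{2^{k}}+1)$ factors is empty and $(d/2)_2=1$) is consistent with this formula and needs no separate treatment.

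The only place where something could go wrong is a bookkeeping slip in the telescoping factorisation, so the main obstacle is really just keeping track of the exponent $a-1$ versus $a$ and confirming that the ``odd-$d$ reduction'' may legitimately be applied to $r=q^{m}$. With that check in place the three cases combine to give exactly the table in the statement.
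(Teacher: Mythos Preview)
Your proof is correct. Each case is handled cleanly: the odd-$d$ case via the standard cyclotomic-type factorisation with an odd cofactor, the $d$ even and $\e=-1$ case via the observation that odd squares are $\equiv 1 \pmod 8$, and the main case via the telescoping factorisation of $r^{2^a}-1$ combined with the odd-$d$ reduction applied to $r=q^m$. The bookkeeping with the exponent $a-1$ is right, and your remark that the case $a=1$ needs no separate treatment is accurate.

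By contrast, the paper does not prove this lemma at all: it simply cites \cite[Lemma 2.1(i)]{BBGT}. So your approach is genuinely different in the sense that you supply a complete, self-contained elementary argument where the paper defers to the literature. What your approach buys is independence from an external reference and transparency (the reader sees exactly where the factor $(d/2)_2$ comes from); what the paper's approach buys is brevity, since this is a standard number-theoretic fact (a special case of the ``lifting the exponent'' lemma for the prime $2$) that the authors evidently regard as well known.
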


\begin{proof}
This is \cite[Lemma 2.1(i)]{BBGT}.
\end{proof}

We will also need the following elementary result concerning factorials.

\begin{lem}\label{l:factorial}
Let $m$ be a positive integer. Then $2^m(m!)_2 = ((2m)!)_2$ and $(m!)_2 < 2^m$.
\end{lem}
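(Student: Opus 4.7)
The plan is to handle the two claims separately, each by an elementary observation. Both amount to standard facts about the $2$-adic valuation of factorials and should not pose any real obstacle; the only mild care needed is in setting up the book-keeping cleanly.

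For the identity $2^m(m!)_2 = ((2m)!)_2$, I would simply split the product defining $(2m)!$ into its even- and odd-indexed factors:
\[
(2m)! \;=\; \prod_{k=1}^{2m} k \;=\; \prod_{k=1}^{m}(2k) \cdot \prod_{k=1}^{m}(2k-1) \;=\; 2^m \cdot m! \cdot \prod_{k=1}^{m}(2k-1).
\]
The last product is a product of odd integers and is therefore odd, so taking the largest power of $2$ dividing each side gives $((2m)!)_2 = 2^m (m!)_2$, as required.

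For the inequality $(m!)_2 < 2^m$, I would invoke Legendre's formula, which states that the exponent of $2$ in $m!$ is $\sum_{i \geqs 1} \lfloor m/2^i\rfloor = m - s_2(m)$, where $s_2(m)$ denotes the number of $1$'s in the base-$2$ expansion of $m$. Since $m \geqs 1$ forces $s_2(m) \geqs 1$, we obtain $(m!)_2 = 2^{m - s_2(m)} \leqs 2^{m-1} < 2^m$. (Alternatively, one can deduce the inequality inductively from the identity just proved by iterating $((2m)!)_2 = 2^m(m!)_2$ and handling odd arguments by the trivial estimate $((m+1)!)_2 = (m+1)_2 \cdot (m!)_2$ when $m+1$ is odd, but the Legendre approach is the quickest.)
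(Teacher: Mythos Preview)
Your proof is correct and follows essentially the same approach as the paper. The paper simply declares the identity $2^m(m!)_2 = ((2m)!)_2$ to be trivial (your even/odd splitting is the natural justification), and for the inequality it also uses Legendre's formula, bounding $\sum_{i\geqs 1}\lfloor m/2^i\rfloor < m\sum_{i\geqs 1}2^{-i} = m$ rather than invoking the closed form $m - s_2(m)$; your version is marginally sharper but the idea is the same.
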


\begin{proof}
The first statement is trivial. And for the inequality, set $(m!)_2 = 2^{\ell}$ and note that 
\[
\ell = \sum_{i=1}^{\infty}\left\lfloor \frac{m}{2^i} \right\rfloor < m \sum_{i=1}^{\infty}2^{-i} = m.
\]
The result follows.
\end{proof}

Our main result is the following.

\begin{prop}\label{p:syl}
Let $G$ be an almost simple group of Lie type over $\mathbb{F}_q$ with socle $G_0$ and assume $q$ is odd. Let $H$ be a Sylow $2$-subgroup of $G$ and set $H_0 = H \cap G_0$. Then we have $|H| = |H_0||G:G_0|_2$, where $|H_0|$ is recorded in Table \ref{tab:syl}.
\end{prop}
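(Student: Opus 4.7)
The plan is to reduce the identity $|H| = |H_0| \cdot |G:G_0|_2$ to a closed-form computation of $(|G_0|)_2 = |H_0|$ for each family of finite simple groups of Lie type in odd characteristic. The identity itself is a direct consequence of Sylow theory applied to the normal subgroup $G_0 \normeq G$: any Sylow $2$-subgroup $H$ of $G$ meets $G_0$ in a Sylow $2$-subgroup $H_0$ of $G_0$, and $|H:H_0| = |HG_0:G_0|$ realises the full $2$-part of $|G:G_0|$ since $H$ is Sylow in $G$.

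To compute $(|G_0|)_2$ family-by-family, I would invoke the standard order formula and observe that, because $q$ is odd, the $q^N$-part contributes nothing; only the cyclotomic-type factors $q^{d_i} - \e_i$, together with the central factor $1/d$ passing from the universal cover to the simple quotient, matter. Lemma \ref{l:nt} then evaluates each $(q^{d_i} - \e_i)_2$ explicitly in terms of $(q-1)_2$, $(q+1)_2$, $(q^2-1)_2$ and $(d_i/2)_2$, and Lemma \ref{l:factorial} collects products of the form $\prod_k (k)_2$ into a closed factorial expression via $((2m)!)_2 = 2^m (m!)_2$. Division by the centre order finally removes a single controlled power of $2$.

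The analysis then splits into the five classical families ($L_n(q)$, $U_n(q)$, $\mathrm{PSp}_{2n}(q)$, $\O_{2n+1}(q)$, $\O_{2n}^{\e}(q)$) and the exceptional families ($G_2(q)$, ${}^3D_4(q)$, $F_4(q)$, ${}^2G_2(q)$, $E_6^{\e}(q)$, $E_7(q)$, $E_8(q)$). In the classical cases some care is required: for $U_n(q)$ the factor $q^i - (-1)^i$ swaps the two cases of Lemma \ref{l:nt} according to the parity of $i$, and for $\O_{2n}^{\e}(q)$ the final factor $q^n - \e$ has a $2$-part depending on both $n \pmod 2$ and $\e$. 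The exceptional cases are more uniform: the order factors as a known product of $q^d \pm 1$ terms, so Lemma \ref{l:nt} applies in a single pass, and the centre (trivial or of order $(3,q-\e)$) contributes almost nothing. The main obstacle is not conceptual but combinatorial: producing a uniform table entry that survives all the case distinctions (parity of $n$, sign $\e$, congruence of $q$ modulo $4$, and the behaviour of the centre). Once these are organised, each row of the table reduces to a short arithmetic verification using only Lemmas \ref{l:nt} and \ref{l:factorial}.
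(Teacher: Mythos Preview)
Your proposal is correct and follows essentially the same route as the paper: the paper's proof is simply the one-line remark that this is ``a routine exercise, working with Lemma \ref{l:nt} and the order of $G_0$'' as recorded in \cite[Tables 5.1.A, 5.1.B]{KL}, which is exactly the computation you outline. Your write-up is considerably more detailed than what the paper provides, but the underlying method (Sylow theory for the identity $|H|=|H_0|\,|G:G_0|_2$, then a cyclotomic-factor calculation of $(|G_0|)_2$ via Lemma \ref{l:nt}) is the same.
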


\begin{proof}
This is a routine exercise, working with Lemma \ref{l:nt} and the order of $G_0$, which is conveniently recorded in \cite[Tables 5.1.A, 5.1.B]{KL}. 
\end{proof}

\begin{table}
\[
\begin{array}{lll} \hline
G_0 & |H_0| & \mbox{Conditions} \\ \hline
{\rm L}_n^{\e}(q) & \a^m((q-\e)_2)^m(m!)_2 & n = 2m+1 \geqs 3 \\
& \frac{1}{d_2}\a^m((q-\e)_2)^{m-1}(m!)_2 & n = 2m \geqs 2,\, d = (n,q-\e) \\
{\rm PSp}_n(q) & \frac{1}{2}\a^m(m!)_2 & n = 2m \geqs 2 \\
\O_n(q) & \frac{1}{2}\a^m(m!)_2 & n = 2m+1 \geqs 3 \\ 
{\rm P\O}_n^{+}(q) & \frac{1}{2d}\a^m(m!)_2 & \mbox{$n = 2m$, $m \geqs 4$ even, $d = (4,q^m-1)$} \\
& \frac{1}{d}\a^{m-1}(q-1)_2((m-1)!)_2 & \mbox{$n = 2m$, $m \geqs 3$ odd, $d = (4,q^m-1)$} \\
{\rm P\O}_n^{-}(q) & \a^{m-1}((m-1)!)_2 & \mbox{$n = 2m$, $m \geqs 4$ even} \\
& \frac{1}{d}\a^{m-1}(q+1)_2((m-1)!)_2 & \mbox{$n = 2m$, $m \geqs 3$ odd, $d = (4,q^m+1)$} \\
E_8(q) & 2^6\a^8 & \\
E_7(q) & 2^2\a^7 & \\
E_6^{\e}(q) & 2^3\a^4((q-\e)_2)^2 & \\
F_4(q) & 2^3\a^4 & \\
{}^3D_4(q) & \a^2 & \\
G_2(q),\, q \geqs 5 & \a^2 & \\
{}^2G_2(q),\, q \geqs 27 & \a & \\ \hline
\end{array}
\]
\caption{The order of $H_0 \in {\rm Syl}_2(G_0)$ with $q$ odd, $\a = (q^2-1)_2$}
\label{tab:syl}
\end{table}

\subsection{Involutions}\label{ss:invol}

Our proof of Theorem \ref{t:main1} will rely on Lemma \ref{l:prob}, which means that we will need to compute an appropriate upper bound on $\widehat{Q}(G,H)$. And to do this, we will make use of Lemma \ref{l:est}. Let $\mathcal{I} = \{y_1, \ldots, y_{\ell}\}$ be a set of representatives of the conjugacy classes in $G$ of involutions.

Typically, we will proceed by partitioning $\mathcal{I}$ into two or three disjoint subsets, say 
\[
\mbox{$\mathcal{I} = \mathcal{I}_1 \cup \cdots \cup \mathcal{I}_m$ with 
$\mathcal{I}_j = \{y_{j,1}, \ldots, y_{j,k_j}\}$,}
\] 
 and for each $j$ we will establish bounds of the form 
\[
\sum_{i=1}^{k_j} |y_{j,i}^G \cap H| \leqs a_j,\;\; \min\{|y_{j,i}^G| \,:\, 1 \leqs i \leqs k_j\} \geqs b_j.
\]
Then Lemma \ref{l:est} yields
\[
\widehat{Q}(G,H) \leqs \sum_{j=1}^{m} a_j^2b_j^{-1}
\]
and our aim is to use this upper bound to force $\widehat{Q}(G,H)<1$.

In order to effectively apply this approach, we need some information on the `small' conjugacy classes of involutions in almost simple groups of Lie type in odd characteristic. For the exceptional groups, it turns out that we will only require a lower bound on $|x^G|$ that is valid for every involution $x\in G$ and we can refer to \cite[Proposition 2.11]{BTh} for this information (see \eqref{e:fq} in the proof of Proposition \ref{p:ex}). However, for classical groups, we will usually need to know the sizes of the two or three smallest conjugacy classes and with this aim in mind we present the following technical result. In the statement, we refer to the notation for involutions from \cite[Table 4.5.1]{GLS}.

\begin{prop}\label{p:inv}
Let $G$ be an almost simple classical group over $\mathbb{F}_q$ with socle $G_0$, where $q$ is odd. Let $x \in G$ be an involution.

\begin{itemize}\addtolength{\itemsep}{0.2\baselineskip}
\item[{\rm (i)}] If $G_0 = {\rm L}_n^{\e}(q)$ with $n \geqs 5$, then either $x$ is a $t_1$-type involution and
\[
|x^G| = \frac{|{\rm GL}^{\e}_n(q)|}{|{\rm GL}^{\e}_{n-1}(q)||{\rm GL}^{\e}_1(q)|} = \frac{q^{n-1}(q^n-\e^n)}{q-\e},
\]
or $|x^G| \geqs f(n,q)$ with
\[
f(n,q) = \left\{
\begin{array}{ll}
\frac{|G_0|}{|{\rm SU}_5(q_0)|} =  \frac{1}{d}q^5(q+1)(q^{3/2}-1)(q^2+1)(q^{5/2}-1) 
& \mbox{if $(\e,n,q) = (+,5,q_0^2)$} \\
\frac{|G_0|}{|{\rm PSp}_6(q)|} = \frac{2}{d}q^6(q^3-\e)(q^5-\e) & \mbox{if $n=6$} \\
\frac{|{\rm GL}_n^{\e}(q)|}{|{\rm GL}_{n-2}^{\e}(q)||{\rm GL}_2^{\e}(q)|} = \frac{q^{2n-4}(q^{n-1}-\e^{n-1})(q^n-\e^n)}{(q^2-1)(q-\e)} & \mbox{otherwise,}
\end{array}\right.
\]
where $d = (n,q-\e)$.

\item[{\rm (ii)}] If $G_0 = {\rm L}_4^{\e}(q)$, then either $x$ is a $\gamma_1$ graph automorphism and
\[
|x^G| \geqs \frac{|G_0|}{|{\rm PSp}_4(q)|} \geqs \frac{1}{2}q^2(q^3-\e),
\]
or 
\[
|x^G| \geqs \frac{|{\rm GL}^{\e}_4(q)|}{|{\rm GL}^{\e}_{3}(q)||{\rm GL}^{\e}_1(q)|} = \frac{q^{3}(q^4-1)}{q-\e}.
\]

\item[{\rm (iii)}] If $G_0 = {\rm PSp}_n(q)$ with $n \geqs 6$, then either $x$ is a $t_1$-type involution and
\[
|x^G| = \frac{|{\rm Sp}_n(q)|}{|{\rm Sp}_{n-2}(q)||{\rm Sp}_2(q)|} = \frac{q^{n-2}(q^n-1)}{q^2-1},
\]
or $|x^G| \geqs f(n,q)$ with
\[
f(n,q) = \left\{
\begin{array}{ll}
\frac{|G_0|}{|{\rm Sp}_{6}(q_0)|} = \frac{1}{2}q^{9/2}(q+1)(q^2+1)(q^3+1) & \mbox{if $(n,q)=(6,q_0^2)$} \\
\frac{|G_0|}{|{\rm GU}_{3}(q)|} = \frac{1}{2}q^6(q-1)(q^2+1)(q^3-1) & \mbox{if $n=6$, $q \ne q_0^2$} \\
\frac{|G_0|}{|{\rm Sp}_{4}(q^2)|} = \frac{1}{2}q^8(q^2-1)(q^6-1) & \mbox{if $n=8$} \\
\frac{|{\rm Sp}_n(q)|}{|{\rm Sp}_{n-4}(q)||{\rm Sp}_4(q)|} = \frac{q^{2n-4}(q^{n-2}-1)(q^n-1)}{(q^4-1)(q^2-1)} & \mbox{otherwise.} \\
\end{array}\right.
\]

\item[{\rm (iv)}] If $G_0 = {\rm PSp}_4(q)$, then either $x$ is a $t_2$-type or $t_2'$-type involution and
\[
|x^G| \geqs \frac{|G_0|}{|{\rm Sp}_{2}(q^2)|} = \frac{1}{2}q^2(q^2-1),
\]
or $|x^G| \geqs f(n,q)$ with
\[
f(n,q) = \left\{
\begin{array}{ll}
\frac{|G_0|}{|{\rm Sp}_{4}(q_0)|} = \frac{1}{2}q^2(q+1)(q^2+1) & \mbox{if $q=q_0^2$} \\
\frac{|G_0|}{|{\rm GU}_{2}(q)|} = \frac{1}{2}q^3(q-1)(q^2+1) & \mbox{otherwise.} \\
\end{array}\right.
\]

\item[{\rm (v)}] If $G_0 = \O_n(q)$ with $n = 2m+1 \geqs 7$, then either $x$ is a $t_m$-type or $t_m'$-type involution and
\[
|x^G| \geqs \frac{|{\rm SO}_n(q)|}{2|{\rm SO}_{n-1}^{-}(q)|} = \frac{1}{2}q^{m}(q^{m}-1),
\]
or 
\[
|x^G| \geqs \frac{|{\rm SO}_n(q)|}{2|{\rm SO}_{n-2}(q)||{\rm SO}_{2}^{-}(q)|} = \frac{q^{n-2}(q^{n-1}-1)}{2(q+1)}.
\]

\item[{\rm (vi)}] If $G_0 = {\rm P\O}_n^{\e}(q)$ with $n = 2m \geqs 8$, then either $x$ is a $\gamma_1$ graph automorphism and
\[
|x^G| \geqs \frac{|{\rm SO}_n^{\e}(q)|}{2|{\rm SO}_{n-1}(q)|} = \frac{1}{2}q^{m-1}(q^{m}-1),
\]
or
\[
|x^G| \geqs \frac{|{\rm SO}_n^{\e}(q)|}{2|{\rm SO}_{n-2}^{-\e}(q)||{\rm SO}_{2}^{-}(q)|} = \frac{q^{n-2}(q^m-\e)(q^{m-1}-1)}{2(q+1)}.
\]
\end{itemize}
\end{prop}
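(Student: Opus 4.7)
The proof is a case-by-case verification that rests on the standard classification of involutions in almost simple classical groups recorded in \cite[Table 4.5.1]{GLS}, together with routine computations of centralizer orders. For each case (i)--(vi) my plan is to enumerate the conjugacy classes of involutions in $G$, to compute $|x^G| = |G|/|C_G(x)|$ from the centralizer data in GLS (adjusting by the index $|G:G_0|$ when an inner class of $G_0$ fuses or splits in passing to $G$), and then to identify the smallest and second-smallest class sizes.

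For orientation, I would first partition the involutions of $G$ into three families: \emph{inner} involutions (those in $\mathrm{Inndiag}(G_0)$), \emph{graph} or graph-field involutions, and \emph{field} involutions (present only when $q = q_0^2$ is a nontrivial square). For an inner involution of type $t_i$ or $t_i'$ in a linear, unitary or symplectic group the centralizer in the ambient classical group has the expected product form ${\rm GL}_i^{\e}(q) \times {\rm GL}_{n-i}^{\e}(q)$ or ${\rm Sp}_{2i}(q) \times {\rm Sp}_{n-2i}(q)$, and the class size decreases monotonically as $i \to 1$ (modulo the symmetry $i \leftrightarrow n-i$). For the generic ranks in parts (i) and (iii) this immediately makes the $t_1$-class the smallest, which gives the stated equalities. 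The lower bound $f(n,q)$ is then obtained by identifying the \emph{second} smallest class: in the generic range this is the $t_2$-type involution with centralizer ${\rm GL}_{n-2}^{\e}(q) \times {\rm GL}_2^{\e}(q)$ in the linear/unitary case and ${\rm Sp}_{n-4}(q) \times {\rm Sp}_4(q)$ in the symplectic case, yielding the third line of each piecewise formula. In the low-rank exceptions ($n=5,6$ in the linear/unitary case and $n=6,8$ in the symplectic case) or when $q$ is a non-trivial square, a field involution or an exceptional small-rank embedding (${\rm SU}_5(q_0)$, ${\rm PSp}_6(q)$, ${\rm Sp}_6(q_0)$, ${\rm GU}_3(q)$, ${\rm Sp}_4(q^2)$, etc.) takes over as the second-smallest centralizer, and the corresponding piecewise branch of $f(n,q)$ records precisely this class size.

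Parts (ii), (iv), (v) and (vi) are handled analogously, with the additional feature that the minimal class in these cases is realised either by a graph automorphism $\gamma_1$ (with centralizer containing ${\rm PSp}_n(q)$ or ${\rm SO}_{n-1}(q)$, giving the first alternatives in (ii) and (vi)), by a large-defect inner $t_m$- or $t_m'$-involution in $\Omega_{2m+1}(q)$ with centralizer ${\rm SO}_{n-1}^{-}(q)$ (giving (v)), or by an isotropic $t_2/t_2'$-involution in ${\rm PSp}_4(q)$ with centralizer ${\rm Sp}_2(q^2)$ (giving (iv)); in each instance the remaining classes are then bounded below by reading off the next-smallest centralizer from GLS's list. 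The main obstacle is not conceptual but bookkeeping: one must carefully track the index $|G:G_0|$ and the fusion/splitting of inner classes in $G$, and separately verify that no diagonal-outer $\widehat{t}_i$ class, no graph-field involution in the unitary setting, and no additional field involution when $q = q_0^2$ produces a smaller class size than the claimed $f(n,q)$. In practice this reduces to a finite set of polynomial comparisons in $q$, each elementary but tedious.
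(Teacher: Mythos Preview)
Your proposal is correct and follows essentially the same approach as the paper: both proceed by reading off the centralizer orders for each involution class from \cite[Table~4.5.1]{GLS} (with \cite[Chapter~3]{BGiu} as a supplementary reference in the paper), then comparing the resulting class sizes to identify the smallest and second-smallest classes, with the low-rank and $q=q_0^2$ exceptions handled separately. The paper's proof is in fact briefer than your plan---it works one representative case ($G_0 = {\rm PSp}_n(q)$, $n \geqslant 10$) in detail to illustrate the method and then declares the remaining cases to be ``a very similar argument'' whose details are omitted.
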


\begin{proof}
This is a straightforward exercise, working with the detailed information on conjugacy classes of semisimple involutions and involutory graph automorphisms presented in \cite[Table 4.5.1]{GLS} (also see \cite[Chapter 3]{BGiu}).

For example, suppose $G_0 = {\rm PSp}_n(q)$ with $n=2m \geqs 10$ and let $x \in G$ be an involution. If $x \in G$ is a field automorphism, then $q = q_0^2$ and \cite[Proposition 3.4.15]{BGiu} gives
\[
|x^G| \geqs \frac{|G_0|}{|{\rm Sp}_n(q_0)|} > \frac{1}{2}q^{n(n+1)/4}.
\]
Otherwise, $x \in {\rm PGSp}_n(q)$ is semisimple and we can read off $|x^G|$ from \cite[Table 4.5.1]{GLS} (or \cite[Table B.5]{BGiu}). Then either $x$ is an involution of type $t_m$, $t_m'$, $t_{m/2}$ or $t_{m/2}'$ (in the notation of \cite[Table 4.5.1]{GLS}), in which case
\[
|x^G| \geqs \frac{|G_0|}{|{\rm Sp}_m(q^2)|} > \frac{1}{4}q^{n^2/4},
\]
or $x$ is an involution of type $t_i$ with $1 \leqs i < m/2$ and we have
\[
|x^G| = \frac{|{\rm Sp}_n(q)|}{|{\rm Sp}_{n-2i}(q)||{\rm Sp}_{2i}(q)|}.
\]
By comparing the above expressions, we deduce that $|x^G|$ is minimal when $x$ is a $t_1$-type involution. And for all other involutions, $|x^G|$ is minimal when $x$ is of type $t_2$, which gives the result recorded in part (iii). Note that the latter statement is false when $n = 8$ since 
\[
\frac{|{\rm Sp}_8(q)|}{2|{\rm Sp}_{4}(q)|^2} > \frac{|{\rm Sp}_8(q)|}{2|{\rm Sp}_{4}(q^2)|},
\]
which explains why the expression for $f(n,q)$ in (iii) has a different form when $n=8$ (and similarly when $n=6$).

A very similar argument applies in each of the remaining cases and we omit the details.
\end{proof}
 
\subsection{Computational methods}\label{ss:comp}

In the proof of Theorem \ref{t:main1}, it will be convenient to use direct computation to handle certain low rank groups of Lie type defined over small fields. For all the computations, we will use {\sc Magma} \cite{magma} (version V2.28-21).

In order to state our main result, let us define $\mathcal{A}$ to be the following collection of simple groups of Lie type, recalling that we use the notation for simple groups from \cite{KL}:

\renewcommand{\arraystretch}{1.2}
\[
\begin{array}{l}
{\rm L}_4^{\e}(3), \, {\rm L}_4^{\e}(5), \, {\rm L}_4^{\e}(7), \, {\rm L}_4^{\e}(9), \, {\rm L}_4^{\e}(17), \,{\rm L}_4(31), \, {\rm L}_4(127), \, {\rm U}_5(3), \, {\rm L}_5(9), \, {\rm L}_6^{\e}(3), \, {\rm L}_6(9), \, {\rm L}_8^{\e}(3), \\
{\rm PSp}_{4}(3), \, {\rm PSp}_{4}(5), \, {\rm PSp}_{4}(7),\, {\rm PSp}_{4}(9), \, {\rm PSp}_{4}(17), \, {\rm PSp}_{6}(3), \, {\rm PSp}_{8}(3), {\rm PSp}_{10}(3), \\
\O_7(3), \, \O_9(3), \, \O_{11}(3), \, \O_{13}(3), \, {\rm P\O}_{8}^{\pm}(3), \, {\rm P\O}_{10}^{\pm}(3),\, {\rm P\O}_{12}^{\pm}(3), \, G_2(3)
\end{array}
\]
\renewcommand{\arraystretch}{1}

\begin{prop}\label{p:comp}
Let $G$ be an almost simple group with socle $G_0 \in \mathcal{A}$ and let $H$ be a Sylow $2$-subgroup of $G$. Then $b(G,H) = 2$.
\end{prop}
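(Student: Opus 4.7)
The plan is to verify Proposition \ref{p:comp} case by case in {\sc Magma}, using the probabilistic criterion of Lemma \ref{l:prob}. For each simple group $G_0$ in the finite list $\mathcal{A}$, and each almost simple $G$ with $G_0 \normeq G \leqs \Aut(G_0)$ (only a handful of extensions per socle), I would construct $G$ in a convenient faithful representation (the natural matrix representation over $\mathbb{F}_q$ for the classical groups, and the built-in representation for $G_2(3)$), extract a Sylow $2$-subgroup $H$ via \texttt{SylowSubgroup}, and then compute
\[
\widehat{Q}(G,H) = \sum_i |x_i^G|\cdot\left(\frac{|x_i^G\cap H|}{|x_i^G|}\right)^2
\]
summed over representatives $x_i$ of the $G$-classes of involutions. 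The class representatives and sizes $|x_i^G|$ come from \texttt{ConjugacyClasses}; the intersection sizes $|x_i^G\cap H|$ are obtained by taking $H$-class representatives $h_j$ of involutions in $H$, identifying which $G$-class each $h_j$ lies in, and summing $|H|/|C_H(h_j)|$ accordingly. Whenever $\widehat{Q}(G,H)<1$ we conclude $b(G,H)=2$.

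For those cases (if any) where this bound is inconclusive, or where enumerating all involution classes in $G$ is too expensive, I would instead exhibit a base of size $2$ directly: take a random $g \in G$ and test whether $H \cap H^g = 1$. Because $1-Q(G,H)$ is bounded well away from zero for the groups in $\mathcal{A}$ (Theorem \ref{t:zenkov} already says $b(G,H)\leqs 3$, and these are precisely the cases we expect to have base size $2$), a brief random search is essentially certain to turn up a witness pair $\{H,H^g\}$ with trivial intersection, which establishes $b(G,H)=2$ unconditionally. In practice I would run both routines and record the output that yields the cleaner certificate for each group.

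The main obstacle will be the largest members of $\mathcal{A}$, namely ${\rm L}_8^{\pm}(3)$, ${\rm PSp}_{10}(3)$, $\O_{13}(3)$ and ${\rm P\O}_{12}^{\pm}(3)$, where the matrix representations have dimension up to $13$ over $\mathbb{F}_3$, and where constructing a Sylow $2$-subgroup explicitly and iterating over its involutions can be memory-intensive. For these I would take advantage of the fact that $H$ can be built from scratch as a standard extension (a wreath product of dihedral groups embedded in a classical group, as in Proposition \ref{p:syl}), rather than letting {\sc Magma} search for one. If the direct sum for $\widehat{Q}(G,H)$ remains too costly, I would partition $I_2(G)$ into the two or three smallest types (read off from \cite[Table 4.5.1]{GLS}) and an "everything else" bucket, use the exact expressions in Proposition \ref{p:inv} for the former and the trivial bound $|x^G\cap H|\leqs i_2(H)-\sum (\text{small types})$ together with Lemma \ref{l:est} for the latter, and verify $\widehat{Q}(G,H)<1$ by interval arithmetic in {\sc Magma}. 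For the very hardest groups, however, I expect the random-search certificate $H \cap H^g = 1$ to be the quickest and most robust route to the conclusion.
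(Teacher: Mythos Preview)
Your proposal is correct and would establish the proposition, but it is considerably more elaborate than the paper's argument. The paper bypasses the $\widehat{Q}$ computation entirely and goes straight to your fallback, the random-search certificate; more importantly, it handles all almost simple $G$ with a given socle $G_0$ in a single stroke rather than looping over extensions. The key observation is that for every $G_0 \in \mathcal{A}$ except ${\rm P\O}_8^{+}(3)$, the quotient ${\rm Aut}(G_0)/G_0$ is a $2$-group. So if $L = {\rm Aut}(G_0)$ (constructed as a permutation group via \texttt{AutomorphismGroupSimpleGroup}) and $J \in {\rm Syl}_2(L)$, then a single witness $x \in L$ with $J \cap J^x = 1$ may be taken in $G_0$ (since $L = JG_0$), and for any intermediate $G_0 \leqs G \leqs L$ the subgroup $H = J \cap G$ is a Sylow $2$-subgroup of $G$ satisfying $H \cap H^x \leqs J \cap J^x = 1$. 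For $G_0 = {\rm P\O}_8^{+}(3)$, where $|{\rm Out}(G_0)| = 24$, one simply replaces $L$ by the index-three subgroup ${\rm PGO}_8^{+}(3) = G_0.D_8$ and repeats the argument. This eliminates the iteration over extensions, avoids enumerating involution classes in the large groups you flag as difficult, and reduces the whole verification to a short random search per socle. Your route via $\widehat{Q}$ would of course also succeed, and has the minor advantage of producing a quantitative bound rather than a bare witness, but at noticeably greater computational cost.
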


\begin{proof}
To begin with, assume $G_0 \ne {\rm P\O}_{8}^{+}(3)$. We first construct the full automorphism group $L = {\rm Aut}(G_0)$ as a permutation group, using the function \texttt{AutomorphismGroupSimpleGroup}, and we take a Sylow $2$-subgroup $J$ of $L$. In every case, we note that $L/G_0$ is a $2$-group and we can use random search to find an element $x \in L$ such that $J \cap J^x = 1$. This means that $J$ has a regular orbit on $L/J$, which in turn implies that $H$ has a regular orbit on $G/H$ and thus $b(G,H) = 2$.

Finally, suppose $G_0 = {\rm P\O}_{8}^{+}(3)$. Here $|{\rm Out}(G_0)| = 24$ and we can use {\sc Magma} to construct the index-three subgroup $L = {\rm PGO}_8^{+}(q) = G_0.D_8$ of ${\rm Aut}(G_0)$ as a permutation group of degree $3360$. We can then proceed as before, taking a Sylow $2$-subgroup $J$ of $L$ and using random search to find an element $x \in L$ such that $J \cap J^x = 1$. The result follows.
\end{proof}

\section{Proof of Theorem \ref{t:main1}}\label{s:main1}

We are now ready to prove Theorem \ref{t:main1}. Our starting point is Zenkov's main reduction theorem for almost simple groups from \cite{Z_92p}, which we stated as Theorem \ref{t:zenkov} in Section \ref{s:intro}. For the reader's convenience, in the following remark we provide a brief overview of Zenkov's proof.
 
\begin{rem}\label{r:zenkov}
Let $G$ be an almost simple group with socle $G_0$ and let $H$ be a Sylow $p$-subgroup of $G$, where $p$ is a prime divisor of $|G|$. 

\begin{itemize}\addtolength{\itemsep}{0.2\baselineskip}
\item[{\rm (a)}] A starting point is the main theorem from \cite{ZM} on simple groups, which allows us to assume that $p$ divides $|G:G_0|$. In particular, $p=2$ when $G_0$ is an alternating or sporadic group; the argument for symmetric groups is given in \cite{ZM} and the $12$ sporadic groups of the form $G = G_0.2$ are handled case-by-case in \cite[Section 3]{Z_92p} (the result for sporadic groups can also be verified very easily using {\sc Magma} \cite{magma}, proceeding as in the proof of Proposition \ref{p:comp}).

\item[{\rm (b)}] Now assume $G_0$ is a simple group of Lie type over $\mathbb{F}_q$. For $p$ odd, a long and technical argument, based on the elimination of a counterexample of minimal order, is presented in \cite[Section 6]{Z_92p}. Here the main result is \cite[Theorem 6.1]{Z_92p}, which shows that $b(G,H) = 3$ if and only if $p=3$ and $G_0 = {\rm P\O}_8^{+}(3)$, with $G$ recorded in Table \ref{tab:zen}. It is worth noting that it is easy to use {\sc Magma} to verify that $b(G,H) = 3$ for the two relevant groups with $G_0 = {\rm P\O}_8^{+}(3)$.

\item[{\rm (c)}] The analysis for $p=2$ divides naturally into two cases, according to whether or not $q$ is odd or even, and the main details are given in Sections 4 and 5 of \cite{Z_92p}, respectively. For $q$ odd, the main result is \cite[Theorem 4.1]{Z_92p}, which implies that $b(G,H) = 2$ unless $q=9$ or $q$ is a Mersenne or Fermat prime. The proof relies on a detailed analysis of a minimal counterexample, with \cite[Lemma 4.3]{Z_92p} as a key tool. The latter states that if $G$ is a minimal counterexample, then $O_2(C) \cap H^x \ne 1$ for all $x \in G$, where $C = C_G(z)$ is the centraliser of an involution $z \in Z(H) \cap G_0$. Then by studying the structure of involution centralisers in $G$, in many cases Zenkov is able to reach a contradiction by establishing the existence of an element $x \in G$ with $O_2(C) \cap H^x = 1$.

\item[{\rm (d)}] In \cite[Lemma 3.18]{Z_92p}, Zenkov proves that $b(G,H) = 3$ when $p=2$ and either $G = {\rm Aut}({\rm L}_2(9)) = {\rm L}_2(9).2^2 \cong A_6.2^2$ or $G = {\rm PGL}_2(q)$ with $q \geqs 7$ a Mersenne prime.

\item[{\rm (e)}] Finally, suppose $p=2$ and $q$ is even. Here the first main result is \cite[Theorem 5.1]{Z_92p}, which tells us that $b(G,H) = 3$ only if $G_0$ is one of the following (here we exclude ${\rm L}_3(2).2 \cong {\rm PGL}_2(7)$, ${\rm L}_4(2).2 \cong S_8$ and ${\rm PSp}_4(2)'.2^2 \cong {\rm L}_2(9).2^2$):
\[
\hspace{12mm} {\rm L}_3(4), \, {\rm L}_n(2) \mbox{ $(n \geqs 5)$}, \, \O_n^{\pm}(2) \mbox{ $(n \geqs 8)$},\, E_6(2), \, F_4(2), \, {}^2F_4(2)', \, G_2(2)'.
\]
As before, the proof proceeds by studying the structure of a minimal counterexample, using the fact that $N_G(H)$ is a Borel subgroup of $G$. 

\vspace{2mm}

\noindent And then it remains to determine which of these possibilities lead to the genuine examples with $b(G,H) = 3$ recorded in Table \ref{tab:zen}. For instance,  suppose $G = {\rm L}_n(2).2$, $\Omega_n^+(2).2$, $E_6(2).2$ or $F_4(2).2$ and set $H_0 = H \cap G_0$. By \cite[Lemma 3.13]{Z_92p}, $H_0$ has a  unique regular orbit on ${\rm Syl}_2(G_0)$ with respect to the usual conjugation action. And since $N_{G_0}(H_0) = H_0$, it follows that $H_0$ has a unique regular orbit on $G_0/H_0$ and this implies that $b(G,H)>2$ (indeed, if $b(G,H) = 2$ then $H_0$ would have at least $|H:H_0| = 2$ regular orbits on $G_0/H_0$).
\end{itemize}
\end{rem}

By combining Zenkov's main theorem for almost simple groups in \cite{Z_92p} (see Theorem \ref{t:zenkov} in Section \ref{s:intro}) with \cite[Lemma 3.18]{Z_92p} (socle ${\rm L}_2(q)$) and the main results in \cite{Z_dim3} (socle ${\rm L}_3(q)$ or ${\rm U}_3(q)$) and \cite{ZM} (simple groups), we see that it suffices to prove the following result.

\begin{thm}\label{t:main2}
Let $G$ be an almost simple group of Lie type over $\mathbb{F}_q$ with socle $G_0$ and let $H$ be a Sylow $2$-subgroup of $G$. Assume that 
\begin{itemize}\addtolength{\itemsep}{0.2\baselineskip}
\item[{\rm (i)}] $G_0$ is not isomorphic to ${\rm L}_2(q)$, ${\rm L}_3(q)$ or ${\rm U}_3(q)$;
\item[{\rm (ii)}] $|G:G_0|$ is even; and 
\item[{\rm (iii)}] Either $q=9$, or $q$ is a Mersenne or Fermat prime.
\end{itemize}
Then $b(G,H) = 2$.  
\end{thm}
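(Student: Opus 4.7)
The plan is to verify the probabilistic criterion $\widehat{Q}(G,H)<1$ from Lemma \ref{l:prob}, which reduces, since $H$ is a $2$-group, to
\[
\widehat{Q}(G,H) = \sum_{i=1}^{\ell} \frac{|y_i^G \cap H|^2}{|y_i^G|},
\]
with $y_1,\ldots,y_\ell$ representatives of the $G$-classes of involutions. The hypothesis on $q$ forces one of $(q-1)_2$ and $(q+1)_2$ to equal $2$ (and for $q=9$ they are $8$ and $2$), so Proposition \ref{p:syl} yields a sharp, completely explicit upper bound on $|H|$ that is polynomial in $q$ of low degree for classical $G_0$ and at most $c\,q^8$ in the exceptional cases. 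Throughout, the small-field and small-rank pairs on which any crude bound will inevitably fail are collected in the list $\mathcal{A}$ and disposed of at the outset by Proposition \ref{p:comp}; what remains is an asymptotic analysis in which we may assume $G_0$ has sufficiently large rank or is defined over a sufficiently large field.

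For an exceptional socle $G_0$, a single uniform lower bound of the form $|x^G|\geqs c\,q^{N}$ valid for every involution, as supplied by \cite[Proposition 2.11]{BTh}, will suffice: combined with the trivial estimate $|y_i^G\cap H|\leqs i_2(H)\leqs|H|$ and the bound on $|H|$ from Proposition \ref{p:syl}, the ratio $|H|^2/|y_i^G|$ is vanishingly small, and summing over the bounded number of involution classes yields $\widehat{Q}(G,H)<1$ as soon as $q$ exceeds an explicit threshold. The residual finite list of pairs $(G_0,q)$ over $\mathbb{F}_3,\mathbb{F}_5,\mathbb{F}_7,\mathbb{F}_9$ is exactly the exceptional part of $\mathcal{A}$.

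The classical case is more delicate: the smallest involution class (the $t_1$-type class in the linear, unitary and symplectic families, and the $\g_1$-type graph-automorphism class in the orthogonal families) has size of order only $q^{2n-3}$, whereas the generic class size from the function $f(n,q)$ in Proposition \ref{p:inv} is of order $q^{4n-8}$. Following the recipe flagged in the introduction, I would partition $I_2(G)$ into (a) the single smallest class $y_1^G$ from Proposition \ref{p:inv}, and (b) all other involution classes. For (b), Lemma \ref{l:est} with $\sum_{(b)}|x^G\cap H|\leqs i_2(H)\leqs|H|$ and $|x^G|\geqs f(n,q)$ gives a contribution at most $|H|^2/f(n,q)$, easily $<1/2$ once $n$ is moderately large. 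For (a), the trivial bound $|y_1^G\cap H|\leqs|H|$ is tight but suffices for $n$ and $q$ large; in the borderline regime ($n\in\{4,5,6\}$, or $|G:G_0|_2$ non-trivial so that graph/field automorphisms enter the picture) I would embed $H$ in a Sylow $2$-subgroup $L$ of a natural reductive overgroup such as ${\rm GL}_n^{\e}(q)$, ${\rm GSp}_n(q)$ or ${\rm GO}_n^{\e}(q)$, exploit the iterated wreath-product structure of $L$ built from cyclic $2$-groups of order $(q^2-1)_2$ and symmetric-group Sylow $2$-subgroups, and count the $t_1$-type (or $\g_1$-type) involutions directly inside $L$, invoking Lemma \ref{l:factorial} to convert factorial $2$-parts into powers of $2$.

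The principal obstacle I expect is this last counting step, carried out uniformly across all classical families while tracking the diagonal, field and graph automorphisms that may appear in $G\setminus G_0$ for our special values of $q$. The arithmetic of $(q\pm1)_2$ — exactly one of which equals $2$, barring $q=9$ — must be deployed carefully to keep the leading $q$-power in the bound on $|y_1^G\cap H|$ from exceeding roughly $q^{n-2}$, the exponent needed to force $|y_1^G\cap H|^2/|y_1^G|\to 0$. Once this is in place, the probabilistic inequality $\widehat{Q}(G,H)<1$ holds for all sufficiently large $n$ and $q$, and every residual pair $(G_0,q)$ — including the Mersenne outliers such as ${\rm L}_4(31)$ and ${\rm L}_4(127)$ on which the asymptotic bound is numerically too weak — lies in the computational list $\mathcal{A}$ and is handled by Proposition \ref{p:comp}.
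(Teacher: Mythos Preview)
Your overall plan matches the paper's proof almost exactly: the probabilistic criterion $\widehat{Q}(G,H)<1$, a single uniform class-size bound for exceptional socles via \cite[Proposition 2.11]{BTh}, a two- or three-way partition of $I_2(G)$ for classical socles using Proposition \ref{p:inv}, explicit counting of the involutions in the smallest class by embedding $H$ in a wreath-product overgroup, and the residual small cases handled by Proposition \ref{p:comp}. The choice of overgroup in the paper is exactly the wreath product you describe (e.g.\ ${\rm GL}_1(q)\wr S_n$ or ${\rm GL}_2(q)\wr S_m$ according to $q\bmod 4$, and analogues for the other families).

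There is one genuine misstep. You assert that for the smallest class $y_1^G$ the trivial bound $|y_1^G\cap H|\leqs |H|$ ``suffices for $n$ and $q$ large'', with the wreath-product counting reserved for the borderline regime $n\in\{4,5,6\}$. This is false. For example, take $G_0={\rm L}_n(q)$ with $q\equiv 1\pmod 4$: then $|H|$ has order roughly $2^n(q-1)^{n-1}$ while $|y_1^G|=b_1=q^{n-1}(q^n-1)/(q-1)$, so
\[
\frac{|H|^2}{b_1}\;\asymp\; 4^n\left(\frac{q-1}{q}\right)^{2n-1},
\]
and since $4(1-1/q)^2>1$ for every $q\geqs 3$, this ratio diverges with $n$. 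The same phenomenon occurs in the other classical families. Consequently the explicit count of $t_1$-type (or $s_1$-type) involutions inside the wreath-product overgroup is required for \emph{every} $n$, not just small $n$; this is precisely what the paper does in each of Propositions \ref{p:psl}--\ref{p:o_even2}, obtaining bounds such as $a_1=n+\binom{n}{2}(q-1)$ which are polynomial in $n$ and linear in $q$. Since you already describe this counting mechanism, the fix is simply to apply it uniformly rather than only in the borderline cases.
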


\subsection{Exceptional groups}\label{ss:excep}

\begin{prop}\label{p:ex}
The conclusion to Theorem \ref{t:main2} holds when $G$ is an exceptional group of Lie type.
\end{prop}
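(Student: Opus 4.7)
The approach is to apply the probabilistic bound of Lemma~\ref{l:prob} via the crude estimate from Lemma~\ref{l:est}. Since $H$ is a Sylow $2$-subgroup, only involutions contribute to $\widehat{Q}(G,H)$. Applying Lemma~\ref{l:est} with a single slot containing all involution classes, taking $a = i_2(H)$ and $b = \min\{|y^G| : y \in I_2(G)\}$, one obtains
\[
\widehat{Q}(G,H) \leqs \frac{i_2(H)^2}{b} \leqs \frac{|H|^2}{b},
\]
so the plan reduces to establishing the numerical inequality $|H|^2 < b$ in each relevant case.

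The upper bound on $|H|$ comes directly from Proposition~\ref{p:syl} together with $|H| = |H_0| \cdot |G:G_0|_2$: writing $\alpha = (q^2-1)_2$, we have $\alpha = 16$ when $q = 9$, and $\alpha = 2(q+1)$ or $2(q-1)$ when $q$ is a Fermat or Mersenne prime respectively. For each exceptional type, $|H_0|$ is bounded by a small power of $\alpha$ (at worst $2^6\alpha^8$ for $E_8$), while $|G:G_0|_2 \leqs 4$. For the lower bound $b$, I will invoke \cite[Proposition~2.11]{BTh}, which for each exceptional family supplies a polynomial lower bound on $|y^G|$ in $q$ whose degree vastly exceeds that of the upper bound on $|H|^2$.

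The proof then proceeds case-by-case over the pairs $(G_0,q)$ compatible with hypotheses (ii) and (iii). Inspecting $\mathrm{Out}(G_0)$, the evenness of $|G:G_0|$ combined with the primality of $q$ rules out $E_8$, $F_4$ and ${}^3D_4$ entirely, since their outer automorphism groups are cyclic of order $f$, $f$, $3f$ respectively with $f = 1$ when $q$ is prime; it also eliminates $G_2(q)$ for prime $q \geqs 5$. The family ${}^2G_2(q)$ never arises, since its defining fields $q = 3^{2m+1}$ are incompatible with (iii). The surviving families are therefore $E_7(q)$ and $E_6^{\e}(q)$ for $q \in \{9\} \cup \{\text{Mersenne or Fermat primes}\}$, together with $E_8(9)$, $F_4(9)$, ${}^3D_4(9)$, $G_2(9)$ and $G_2(3)$. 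In each of these, substitution of the explicit formulas from Proposition~\ref{p:syl} and \cite[Proposition~2.11]{BTh} into $|H|^2 < b$ verifies the inequality directly, usually with many orders of magnitude to spare.

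I expect the main obstacle to be the single boundary case $G_2(3).2$, where $|H| = 128$ and the smallest involution class in $G_0$ is of comparable size, so the crude bound $i_2(H) \leqs |H|$ fails. However, this case is independently absorbed by Proposition~\ref{p:comp}, since $G_2(3) \in \mathcal{A}$. In every other case, the inequality holds with a wide margin and no finer accounting of the involutions in $H$ is required.
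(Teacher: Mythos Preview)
Your proposal is correct and follows essentially the same route as the paper: both arguments apply Lemma~\ref{l:est} with a single slot to obtain $\widehat{Q}(G,H) \leqs |H|^2/b$, bound $|H|$ via Proposition~\ref{p:syl} and $b$ via \cite[Proposition~2.11]{BTh}, and defer $G_2(3)$ to Proposition~\ref{p:comp}. The only difference is cosmetic: you use the structure of ${\rm Out}(G_0)$ to discard $E_8(q)$, $F_4(q)$, ${}^3D_4(q)$ and $G_2(q)$ for $q$ prime \emph{before} computing, whereas the paper runs the numerical check uniformly over all exceptional types and only afterwards discards the two borderline failures $F_4(3)$ and $G_2(5)$ on the grounds that $G$ would then be simple.
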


\begin{proof}
First observe that the hypotheses in parts (ii) and (iii) of Theorem \ref{t:main2} imply that \[
G_0 \ne {}^2B_2(q), \, {}^2G_2(q)', \, {}^2F_4(q)'.
\]
Set $\a = (q^2-1)_2$ and $\b = \delta_{9,q}$ (so $\b = 1$ if $q=9$, otherwise $\b=0$). Note that $\a \leqs 2(q+1)$.

\renewcommand{\arraystretch}{1.2}
Let $x \in G$ be an involution. Then by appealing to \cite[Proposition 2.11]{BTh}, we see that $|x^G|> f_G(q)$, where $f_G(q)$ is defined as follows (with $\gamma = (q-1)/q$ and $\delta = (3,q-\e)^{-1}$):
\begin{equation}\label{e:fq}
\begin{array}{l|llllll}
G_0 & E_8(q) & E_7(q) & E_6^{\e}(q) & F_4(q) & {}^3D_4(q) & G_2(q) \\ \hline
f_G(q) & q^{112} & \frac{1}{2}\gamma q^{54} & \gamma\delta q^{26} & q^{16} & q^{14} & q^7
\end{array}
\end{equation}
Let $H$ be a Sylow $2$-subgroup of $G$ and set $H_0 = H \cap G_0$. As in \eqref{e:Q}, let $Q(G,H)$ be the probability that two random elements in $\O = G/H$ form a base for $G$ and recall that $Q(G,H) \leqs \widehat{Q}(G,H)$ as in \eqref{e:Qhat}, so it suffices to show that $\widehat{Q}(G,H)<1$.  
\renewcommand{\arraystretch}{1}

First assume $G_0 = E_8(q)$. Here Proposition \ref{p:syl} gives $|H_0| = 2^6\a^8$ and we deduce that $|H| \leqs 2^{14+\b}(q+1)^8 = a$. Since $|x^G| > f_G(q)$ for every involution $x \in G$, Lemma \ref{l:est} implies that
\begin{equation}\label{e:easy}
\widehat{Q}(G,H) \leqs a^2f_G(q)^{-1}
\end{equation}
and it is easy to check that this upper bound is less than $1$ for all $q \geqs 3$. Similarly, \eqref{e:easy} holds in each of the remaining cases, where $f_G(q)$ is given in \eqref{e:fq} and $a = 2^{c+\b}(q+1)^d$ with  
\[
(c,d) = (10,7),\, (8,6), \, (7,4), \, (2,2), \, (3,2) 
\]
for $G_0 = E_7(q), E_6^{\e}(q), F_4(q), {}^3D_4(q), G_2(q)$, respectively. It is now a routine exercise to check that the upper bound in \eqref{e:easy} is sufficient unless $G_0 = F_4(3)$, $G_2(5)$ or $G_2(3)$. Here the first two possibilities can be discarded since $G$ is simple (recall that we may assume $|G:G_0|$ is even), while the latter case was handled in Proposition \ref{p:comp}.
\end{proof}

\subsection{Classical groups}\label{ss:class}

In this section we complete the proof of Theorem \ref{t:main1} by establishing Theorem \ref{t:main2} for classical groups. Throughout, we will adopt the standard notation for classical groups given in \cite{KL}. In particular, we denote the simple orthogonal groups by ${\rm P\O}_n^{\e}(q)$, which differs from the notation for orthogonal groups used in the Atlas \cite{Atlas}, for example. 

Let $G$ be an almost simple classical group over $\mathbb{F}_q$ as in the statement of Theorem \ref{t:main2}. By combining the hypotheses of the theorem with the existence of well-known isomorphisms among some of the low-dimensional classical groups (see \cite[Proposition 2.9.1]{KL}, for example), we may assume $G_0$ is one of the following:
\[
\begin{array}{ll}
{\rm L}_n(q), \, {\rm U}_n(q) & n \geqs 4 \\
{\rm PSp}_n(q) & \mbox{$n \geqs 4$ even} \\
\O_n(q) & \mbox{$n \geqs 7$ odd, $q$ odd} \\
{\rm P\O}_n^{+}(q), \, {\rm P\O}_n^{-}(q) & \mbox{$n \geqs 8$ even}
\end{array}
\]

We will sometimes refer to the `type' of a subgroup $L$ of $G$, which is intended to provide an approximate description of the structure of $L$. For example, if $G_0 = {\rm P\O}_{10}^{+}(q)$ then we may refer to a subgroup $L$ of type $({\rm O}_{2}^{-}(q) \wr S_4) \perp {\rm O}_{2}^{+}(q)$, which indicates that $L$ is the stabiliser in $G$ of an orthogonal decomposition 
\[
V = U \perp W = (U_1 \perp U_2 \perp U_3 \perp U_4) \perp W
\]
of the natural module $V$ for $G_0$, where each $U_i$ is a $2$-dimensional non-degenerate subspace of minus-type, and $W$ is a non-degenerate $2$-space of plus-type (recall that if $m = 2\ell$ is even, then an orthogonal $m$-space is of \emph{plus-type} if it contains a totally singular $\ell$-space, otherwise it is a \emph{minus-type} space).

Finally, we refer the reader to \cite[Table 4.5.1]{GLS} and \cite[Chapter 3]{BGiu} for detailed information on the conjugacy classes of involutions in ${\rm Aut}(G_0)$. As in the statement of Proposition \ref{p:inv}, we will freely adopt the notation for involutions given in \cite[Table 4.5.1]{GLS}.

\begin{prop}\label{p:psl}
The conclusion to Theorem \ref{t:main2} holds if $G_0 = {\rm L}_n(q)$.
\end{prop}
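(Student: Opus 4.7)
The plan is to apply Lemma~\ref{l:prob} and show $\widehat{Q}(G,H) < 1$. Since $H$ is a Sylow $2$-subgroup, only involutions contribute, and Proposition~\ref{p:syl} records $|H|$ explicitly. Before the main argument I would invoke Proposition~\ref{p:comp} to dispose of the small cases $(n,q)$ already in $\mathcal{A}$, namely ${\rm L}_4(q)$ for $q \in \{3,5,7,9,17,31,127\}$, ${\rm L}_5(9)$ and ${\rm L}_8(3)$, and thereafter assume $(n,q)$ is outside this list. For $n \geqs 5$, following Section~\ref{ss:invol}, I would partition $I_2(G) \cap H$ into $\mathcal{I}_1 := t_1^G \cap H$ (the minimal class, by Proposition~\ref{p:inv}(i)) and $\mathcal{I}_2$ consisting of all other involutions in $H$. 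Lemma~\ref{l:est} then gives
\[
\widehat{Q}(G,H) \leqs \frac{|\mathcal{I}_1|^2 (q-1)}{q^{n-1}(q^n - 1)} + \frac{|\mathcal{I}_2|^2}{f(n,q)},
\]
where $f(n,q)$ is the lower bound from Proposition~\ref{p:inv}(i), of order roughly $q^{4n-8}$ for generic $n$. The second summand is handled crudely by $|\mathcal{I}_2| \leqs |H|$, which by Proposition~\ref{p:syl} is polynomial in $q$ of degree at most $n-1$; thus $|H|^2/f(n,q) \ll 1$ by a large margin for $n \geqs 5$. For $n = 4$, Proposition~\ref{p:inv}(ii) shows that the minimal class is the $\gamma_1$ graph class (present because $|G:G_0|$ is even and $G$ must contain a graph automorphism to afford involutions outside $G_0$), so I would refine the partition to a three-piece split and bound $|\gamma_1^G \cap H|$ separately using the structure of the outer coset.

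The crucial step is controlling $|\mathcal{I}_1|$. A $t_1$-involution is the image in $G_0$ of a reflection on the natural module $\mathbb{F}_q^n$ and hence lies in $G_0$, so it suffices to count them in $H_0 := H \cap G_0$. The strategy is to embed $H_0$ in a convenient overgroup $L \leqs G_0$ in which reflections are easy to enumerate. When $q = 9$ or $q$ is a Fermat prime, so that $4 \mid q-1$, I would take $L = N_{G_0}(T)$ with $T$ a split maximal torus, so that $L \cong D.P$ with $D$ a diagonal $2$-group and $P \in {\rm Syl}_2(S_n)$; a reflection must act trivially on the $P$-factor and therefore arises from one of $n$ coordinate axes, yielding a bound of order $n \cdot (q-1)_2$. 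When $q \geqs 7$ is a Mersenne prime, $(q-1)_2 = 2$, and the Sylow $2$-subgroup instead lies in the normalizer of a torus built from anisotropic ${\rm GL}_2(q)$-type blocks of order $(q^2-1)_2/2$; here the reflections arise as central involutions of specific blocks in a wreath-product structure and admit an equally clean count. In either case a bound of the form $|\mathcal{I}_1| = O(q^{n-2})$ should suffice to make the first summand $o(1)$.

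The hardest part will be uniformising the $t_1$-count across the three flavours of $q$ (Mersenne, Fermat, $q=9$) while simultaneously tracking the effect of outer automorphisms: field automorphisms when $q=9$, and diagonal or graph automorphisms when $|G/G_0|$ is a $2$-group of order $>2$, since these enlarge $H$ beyond $H_0$ and introduce additional involution classes such as $\gamma_2$, $\gamma_2'$ or graph-field involutions. I expect these extra classes to be large enough for the trivial bound $|x^G \cap H| \leqs |H|$ to absorb them comfortably in the $\mathcal{I}_2$-sum, but verifying this will require a careful case analysis driven by \cite[Table 4.5.1]{GLS}. Any boundary pairs $(n,q)$ where the inequalities narrowly fail should coincide precisely with those already listed in $\mathcal{A}$, closing the argument via Proposition~\ref{p:comp}.
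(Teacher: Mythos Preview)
Your strategy matches the paper's: both partition the involutions in $H$ into the $t_1$-class versus the rest, embed $H$ in a wreath-product overgroup $L$ (of type ${\rm GL}_1(q)\wr S_n$ when $q\equiv 1\pmod 4$ and of type ${\rm GL}_2(q)\wr S_m$ when $q\equiv 3\pmod 4$, further split according to the parity of $n$) to count $t_1$-involutions explicitly, bound the remaining contribution by $|H|^2/f(n,q)$, and hand the borderline small cases off to Proposition~\ref{p:comp}.

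There is, however, a concrete error in your $t_1$-count. You claim that in $L\cong D.P$ a $t_1$-involution ``must act trivially on the $P$-factor''. This is false: a transposition in $S_n$, viewed as a permutation matrix in ${\rm GL}_n(q)$, already has a one-dimensional $(-1)$-eigenspace and is itself a $t_1$-involution, and the same holds for most involutions in the cosets $D\pi$ with $\pi$ a transposition. The paper's count for $q\equiv 1\pmod 4$ is accordingly
\[
a_1 \;=\; n + \binom{n}{2}(q-1),
\]
the second summand coming exactly from these transposition cosets; your stated bound ``of order $n\cdot(q-1)_2$'' is an undercount and is not a valid upper bound on $|\mathcal{I}_1|$. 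The same issue recurs in the Mersenne case: inside $L$ of type ${\rm GL}_2(q)\wr S_m$ the $t_1$-involutions are not merely the ``central involutions of specific blocks'' but all elements of a single ${\rm GL}_2(q)$ factor conjugate to $(-I_1,I_1)$, giving the paper's $a_1 = 1 + mq(q+1)$ (respectively $mq(q+1)$ when $n$ is even). Once this is corrected, the numerics go through exactly as in the paper. You should also expect, as the paper finds, that for $n\in\{4,6\}$ the second-smallest involution class is $\gamma_1$ rather than $t_2$ (you anticipate this for $n=4$, though your assertion that $|G:G_0|$ even forces $G$ to contain a graph automorphism is not correct---consider $G={\rm PGL}_4(q)$).
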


\begin{proof}
Let $H$ be a Sylow $2$-subgroup of $G$ and set $H_0 = H \cap G_0$ and $\a = (q^2-1)_2$ as before. In addition, set $d = (n,q-1)$ and $\b = \delta_{9,q}$.

Given an integer $1 \leqs k \leqs n/2$, we say that an involution $x \in G$ is of type $t_k$ if it is conjugate in ${\rm PGL}_n(q)$ to $y = \hat{y}Z$, where $\hat{y} = (-I_k,I_{n-k}) \in {\rm GL}_n(q)$ is a diagonal matrix and $Z$ is the centre of ${\rm GL}_n(q)$. This notation is consistent with \cite[Table 4.5.1]{GLS}.

\vs

\noindent \emph{Case 1. $n$ odd, $q \equiv 1 \imod{4}$}

\vs

Here $n = 2m+1 \geqs 5$ and either $q = 9$ or $q \in \{5, 17, 257, 65537, \ldots\}$ is a Fermat prime. In particular, $q-1$ is a $2$-power and $\a = 2(q-1)$. As recorded in Table \ref{tab:syl}, we have
\[
|H_0| = \a^m((q-1)_2)^m(m!)_2 = (q-1)^{n-1}((n-1)!)_2 = (q-1)^{n-1}(n!)_2
\]
and it follows that $H$ is contained in a subgroup $L$ of $G$ of type ${\rm GL}_1(q) \wr S_n$. The case $(n,q) = (5,9)$ was handled in Proposition \ref{p:comp}, so we may assume $n \geqs 7$ if $q=9$.

Suppose $x \in G$ is a $t_1$-involution. Then
\begin{equation}\label{e:psl1}
|x^G| = \frac{|{\rm GL}_n(q)|}{|{\rm GL}_{n-1}(q)||{\rm GL}_1(q)|} = \frac{q^{n-1}(q^n-1)}{q-1} = b_1
\end{equation}
and we claim that $H$ contains at most $a_1 = n+\binom{n}{2}(q-1)$ such elements. To justify the claim, we will count the number of involutions of type $(I_1,-I_{n-1})$ in the subgroup $J = {\rm GL}_1(q) \wr S_n$ of ${\rm GL}_n(q)$. Visibly, there are $\binom{n}{1} = n$ such elements in the base group $B = {\rm GL}_1(q)^n$ of $J$. And if $x \in B\pi$ with $1 \ne \pi \in S_n$, then $\pi$ is a transposition, $x$ is $B$-conjugate to $\pi$ and we have $|C_B(x)| = |{\rm GL}_1(q)|^{n-1}$. Since $S_n$ contains $\binom{n}{2}$ transpositions, it follows that there are exactly $\binom{n}{2}(q-1)$ such involutions in $J \setminus B$, whence $a_1$ in $J$ in total.

By Proposition \ref{p:inv}, if $x \in G$ is a non-$t_1$ involution then
\begin{equation}\label{e:psl2}
|x^G| \geqs \frac{|{\rm GL}_n(q)|}{|{\rm GL}_{n-2}(q)||{\rm GL}_2(q)|} = \frac{q^{2n-4}(q^{n-1}-1)(q^n-1)}{(q^2-1)(q-1)} = b_2
\end{equation}
(minimal if $x$ is a $t_2$-involution, using the fact that $(n,q) \ne (5,9)$) and we note that 
\[
|H| = |H_0||G:G_0|_2 \leqs (q-1)^{n-1}((n-1)!)_22^{1+\b} < 2^{n+\b}(q-1)^{n-1} = a_2,
\]
since $((n-1)!)_2 < 2^{n-1}$ by Lemma \ref{l:factorial}.

Then by applying Lemmas \ref{l:prob} and \ref{l:est}, we deduce that
\begin{equation}\label{e:two}
Q(G,H) < a_1^2b_1^{-1} + a_2^2b_2^{-1}
\end{equation}
and it is routine to check that this upper bound yields $Q(G,H) < 1$ and thus $b(G,H) = 2$.
  
\vs

\noindent \emph{Case 2. $n$ odd, $q \equiv 3 \imod{4}$}

\vs

Here $n = 2m+1 \geqs 5$ and $q \in \{3, 7, 31, 127, 8191, \ldots\}$ is a Mersenne prime. Note that $q+1$ is a $2$-power and $\a = 2(q+1)$. By inspecting Table \ref{tab:syl},
\[
|H_0| = \a^m((q-1)_2)^m(m!)_2 = (2\a)^m(m!)_2
\]
and we deduce that $H$ is contained in a subgroup $L$ of $G$ of type $({\rm GL}_2(q) \wr S_m) \oplus {\rm GL}_1(q)$. Here $L$ is the stabiliser in $G$ of a direct sum decomposition of the natural module of the form
\[
V = U \oplus W = (U_1 \oplus \cdots \oplus U_m) \oplus W,
\]
where $\dim W = 1$ and each $U_i$ is $2$-dimensional. The case $(n,q) = (5,3)$ was handled in Proposition \ref{p:comp}, so we may assume $q \geqs 7$ when $n = 5$. 

As in Case 1, if $x \in G$ is a $t_1$-involution then $|x^G| = b_1$ as in \eqref{e:psl1} and by counting the number of such elements in $L$ we deduce that there are at most 
\[
a_1 = 1 + \binom{m}{1}\frac{|{\rm GL}_2(q)|}{|{\rm GL}_1(q)|^2} = 1+mq(q+1)
\]
such involutions in $H$. And for any other involution $x \in G$ we see that \eqref{e:psl2} holds and we note that $|H| \leqs 2^{3m+1}(q+1)^m = a_2$. As before, the result now follows via the upper bound in \eqref{e:two}.

\vs

\noindent \emph{Case 3. $n$ even, $q \equiv 1 \imod{4}$}

\vs

Here $n = 2m \geqs 4$ and either $q=9$ or $q$ is a Fermat prime. From Table \ref{tab:syl} we read off
\[
|H_0| = \frac{1}{d}\a^m(q-1)^{m-1}(m!)_2 = \frac{1}{d}(q-1)^{n-1}(n!)_2
\]
and we see that $H$ is contained in a subgroup $L$ of type ${\rm GL}_1(q) \wr S_n$.

For $n \geqs 8$, we can now proceed as in Case 1 and we deduce that \eqref{e:two} holds, where $a_1,b_1$ and $b_2$ are defined as in Case 1, and we set $a_2 = 2^{n+1+\b}(q-1)^{n-1}$. It is easy to check that the bound in \eqref{e:two} implies that $Q(G,H) <1$ for all $n \geqs 8$ and $q \geqs 5$, so for the remainder of Case 3 we may assume $n = 4$ or $6$.

Suppose $n = 6$ and note that $d = (n,q-1) = 2$. In view of Proposition \ref{p:comp}, we may assume $q \geqs 17$. As above, the contribution to $\widehat{Q}(G,H)$ from $t_1$-involutions is at most $a_1^2b_1^{-1}$, where $a_1$ and $b_1$ are defined as before. As recorded in Proposition \ref{p:inv}, if $x \in G$ is any other involution then $|x^G|$ is minimal when $x$ is a $\gamma_1$-type  graph automorphism (rather than a $t_2$-involution) and thus
\[
|x^G| \geqs \frac{|G_0|}{|{\rm PSp}_6(q)|} = q^6(q^3-1)(q^5-1)=b_2.
\]
In addition, we have $|H| \leqs 2^{1+\b}(q-1)^5(6!)_2 = 2^{5+\b}(q-1)^5=a_2$. It is now routine to check that the upper bound in \eqref{e:two} is sufficient.

Finally, let us assume $n=4$. The groups with $q \in \{5,9,17\}$ were handled in Proposition \ref{p:comp}, so we may assume $q \geqs 257$. As usual, the contribution to $\widehat{Q}(G,H)$ from $t_1$-involutions is at most $a_1^2b_1^{-1}$. If $x \in G$ is a $\gamma_1$ graph automorphism, then 
\[
|x^G| \geqs \frac{|G_0|}{|{\rm PSp}_4(q)|} = \frac{1}{2}q^2(q^3-1)=b_2
\]
and the proof of \cite[Proposition 2.7]{FPR3} indicates that there are at most 
\[
a_2 = \frac{4!}{2!2^2}|{\rm GL}_1(q)| = 3(q-1)
\]
such elements in $L$. And since $q$ is a prime, we see that  
\[
|x^G| \geqs \frac{|{\rm GL}_4(q)|}{2|{\rm GL}_2(q^2)|} = \frac{1}{2}q^4(q-1)(q^3-1)=b_3
\]
for all other involutions $x \in G$. Now $|H| \leqs 2^{1+\b}(q-1)^3(4!)_2 = 2^{4+\b}(q-1)^3=a_3$, so 
\begin{equation}\label{e:three}
Q(G,H)< a_1^2b_1^{-1} + a_2^2b_2^{-1} + a_3^2b_3^{-1}
\end{equation}
and it is straightforward to check that this upper bound is less than $1$ for all $q \geqs 257$.

\vs

\noindent \emph{Case 4. $n$ even, $q \equiv 3 \imod{4}$}

\vs

Write $n = 2m \geqs 4$ and note that $q$ is a Mersenne prime. From Table \ref{tab:syl} we read off
\[
|H_0| = \frac{1}{2}\a^m(q-1)^{m-1}(m!)_2,
\]
which allows us to conclude that $H$ is contained in a subgroup $L$ of type ${\rm GL}_2(q) \wr S_m$. 

Let $x \in G$ be a $t_1$-involution and note that $|x^G| = b_1$, where $b_1$ is defined as in \eqref{e:psl1}. We calculate that there are at most 
\[
a_1 = \binom{m}{1}\frac{|{\rm GL}_2(q)|}{|{\rm GL}_1(q)|^2} = mq(q+1)
\]
such elements in $L$. If we now assume $n \geqs 8$, then any other involution $x \in G$ satisfies the bound $|x^G| \geqs b_2$ as in \eqref{e:psl2}. So for $n \geqs 8$ it follows that \eqref{e:two} holds with $a_2 = 2^{3m}(q+1)^m$ and we check that this bound yields $Q(G,H)<1$ unless $(n,q) = (8,3)$. But the latter case was handled in Proposition \ref{p:comp}, so the proof is complete for $n \geqs 8$.

Now assume $n = 6$. If $x \in G$ is a non-$t_1$ involution then $|x^G| \geqs q^6(q^3-1)(q^5-1) = b_2$ as in Case 3. If we now take $a_1$ and $b_1$ as above, noting that $|H| \leqs 2^7(q+1)^3 = a_2$ since $(3!)_2 = 2$, then the bound in \eqref{e:two} yields $Q(G,H)<1$ for all $q \geqs 7$ and we recall that the case $q = 3$ was treated in Proposition \ref{p:comp}. 

Finally, let us assume $n=4$. Define $a_1$ and $b_1$ as above and set $b_2 = \frac{1}{2}q^2(q^3-1)$ as in Case 3 (for $n=4$). Since $|H| \leqs 2^5(q+1)^2 = a_2$, it follows that \eqref{e:two} holds and one checks that this upper bound yields $Q(G,H) < 1$ for all $q > 127$ (recall that $q$ is a Mersenne prime, so the bound $q>127$ implies that $q \geqs 8191$). Finally, the result for $q \in \{3,7,31,127\}$ can be checked computationally (see Proposition \ref{p:comp}). 
\end{proof}

\begin{prop}\label{p:psu}
The conclusion to Theorem \ref{t:main2} holds if $G_0 = {\rm U}_n(q)$.
\end{prop}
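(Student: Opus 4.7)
The plan is to adapt the proof of Proposition \ref{p:psl} to the case $\e = -1$. The key observation is that the large $2$-power factor in the Sylow structure switches roles: if $q$ is a Mersenne prime, then $(q+1)$ is a $2$-power and plays the role that $(q-1)$ played in the Fermat/$9$ case of the linear setting, while if $q$ is Fermat or $q=9$ then $(q-1)$ is a $2$-power and $(q+1)_2 = 2$, playing the role that $(q-1)$ played in the linear Mersenne case. This swap governs the structure of the natural overgroup of $H$ in each sub-case.

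I would split into four cases according to the parity of $n$ and the type of $q$. In the Mersenne cases, Table \ref{tab:syl} yields $|H_0| \approx (q+1)^{n-1}(n!)_2$ (up to the factor $d_2^{-1}$ when $n$ is even), so $H$ embeds in an overgroup $L$ of type ${\rm GU}_1(q) \wr S_n$. By Proposition \ref{p:inv}(i) (with $\e = -$) the $t_1$-involution class has size $b_1 = q^{n-1}(q^n-(-1)^n)/(q+1)$, and a straightforward wreath-product count gives at most $a_1 = n + \binom{n}{2}(q+1)$ such involutions in $H$; any other involution satisfies $|x^G| \geqs b_2$ (the second estimate in Proposition \ref{p:inv}(i)), while the trivial bound $|H| \leqs a_2 = 2^{n+\b}(q+1)^{n-1}$ follows from Lemma \ref{l:factorial}, where $\b = \delta_{9,q}$ as before. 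In the Fermat/$9$ cases the analogous overgroup is of type $({\rm GU}_2(q) \wr S_m) \times {\rm GU}_1(q)$ for $n = 2m+1$, or ${\rm GU}_2(q) \wr S_m$ for $n = 2m$, since $|{\rm GU}_2(q)|_2 = 4(q-1)$ matches the $2$-part of $H_0$ in block-by-block fashion; here the count of $t_1$-involutions becomes $\binom{m}{1}|{\rm GU}_2(q)|/|{\rm GU}_1(q)|^2 = mq(q-1)$, in analogy with Case 2 of the linear proof.

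In each case the aim is to verify
\[
\widehat{Q}(G,H) \leqs a_1^2 b_1^{-1} + a_2^2 b_2^{-1} < 1
\]
via Lemmas \ref{l:prob} and \ref{l:est}, exactly as in \eqref{e:two}. For the low-dimensional cases $n = 4$ and $n = 6$ a third bucket is needed to handle the $\gamma_1$ graph automorphism class, using the refined estimate in Proposition \ref{p:inv}(ii) and an orbit count as in the proof of \cite[Proposition 2.7]{FPR3} to bound the number of $\gamma_1$-involutions in $L$; this produces a three-term bound in the spirit of \eqref{e:three}. The finite list of groups for which the numerical estimate fails (expected to include ${\rm U}_4(q)$ for $q \in \{3,5,7,9,17\}$ together with ${\rm U}_5(3)$, ${\rm U}_6(3)$ and ${\rm L}_8^{-}(3) = {\rm U}_8(3)$) is already contained in $\mathcal{A}$, and so these are dispatched by Proposition \ref{p:comp}.

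The main obstacle is expected to be the $n = 4$ analysis for small $q$. Here the $\gamma_1$ class is of size comparable to the $t_1$ class, the $2$-part of ${\rm GU}_2(q)$ is only slightly larger than $|H_0|$, and all three bucket bounds have to be balanced tightly; in the smallest instances the probabilistic inequality is not numerically clean, and the argument is closed off by invoking the computational check in Proposition \ref{p:comp}.
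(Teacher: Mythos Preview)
Your overall strategy is the same as the paper's: the four-way case split according to the parity of $n$ and the residue of $q$ mod $4$, the identification of the overgroup $L$ (of type ${\rm GU}_1(q)\wr S_n$ in the Mersenne cases and ${\rm GU}_2(q)\wr S_m$ with an extra ${\rm GU}_1(q)$ factor in the Fermat/$9$ cases), the $t_1$-counts $n+\binom{n}{2}(q+1)$ and $mq(q-1)$, and the eventual appeal to Proposition~\ref{p:comp} for the smallest cases all match.

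There is, however, a genuine gap in your coverage of the Mersenne cases. The two-term estimate $a_1^2b_1^{-1}+a_2^2b_2^{-1}$ with $a_2 = 2^{n+\b}(q+1)^{n-1}$ is \emph{not} sufficient for $q=3$ over a rather wide range of $n$: in the paper's Case~2 ($n$ odd) it fails for all odd $n\leqs 25$, and in Case~4 ($n$ even) the paper does not even attempt a two-term bound for $n\geqs 10$. None of the groups ${\rm U}_7(3),{\rm U}_9(3),\ldots,{\rm U}_{17}(3)$ (nor ${\rm U}_{10}(3),{\rm U}_{12}(3),\ldots$) lies in $\mathcal{A}$, so your proposed fallback to Proposition~\ref{p:comp} does not apply. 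The paper's fix is an ingredient your plan omits: one counts $t_2$-involutions in $L$ explicitly (the expression \eqref{e:psua2}), and then uses a three-term bound with the third bucket governed by the $t_3$-class size (see \eqref{e:psu3}). This $t_2$-count is needed for \emph{all} even $n\geqs 10$ in the Mersenne case, not just for $n\in\{4,6\}$, and for odd $7\leqs n\leqs 17$ with $q=3$.

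A smaller point: for $G_0={\rm U}_4(q)$ with $q\equiv 1\imod 4$, the three-term bound based on the $\gamma_1$-count in $L$ only gives $Q(G,H)<1$ for $q>257$; the paper treats $q=257$ separately by bounding the number of graph automorphisms in $H$ by $|H\cap{\rm PGU}_4(q)|$ via a coset argument, rather than by the wreath-product count. So your expectation that the $n=4$ analysis closes cleanly once the $\gamma_1$ bucket is added is slightly optimistic.
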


\begin{proof}
As in the proof of the previous proposition, let $H$ be a Sylow $2$-subgroup of $G$ and set $H_0 = H \cap G_0$ and $\a = (q^2-1)_2$. Write $d = (n,q+1)$ and $\b = \delta_{9,q}$.

For an integer $1 \leqs k \leqs n/2$, we say that $x \in G$ is of type $t_k$ if it is conjugate in ${\rm PGU}_n(q)$ to $y = \hat{y}Z$, where $\hat{y} = (-I_k,I_{n-k}) \in {\rm GU}_n(q)$ is a diagonal matrix (with non-degenerate eigenspaces) and $Z$ is the centre of ${\rm GU}_n(q)$. As before, this notation is consistent with \cite[Table 4.5.1]{GLS}.

\vs

\noindent \emph{Case 1. $n$ odd, $q \equiv 1 \imod{4}$}

\vs

Write $n = 2m+1 \geqs 5$ and note that $\a = 2(q-1)$. By inspecting Table \ref{tab:syl}, we see that
\[
|H_0| = \a^m((q+1)_2)^m(m!)_2 = 2^{n-1}(q-1)^{m}(m!)_2
\]
and hence $H$ is contained in a subgroup $L$ of $G$ of type $({\rm GU}_2(q) \wr S_m)\perp {\rm GU}_1(q)$. Here the notation indicates that $L$ is the stabiliser in $G$ of an orthogonal decomposition  
\[
V = U \perp W = (U_1 \perp \cdots \perp U_m) \perp W
\]
of the natural module $V$ for $G_0$, where $W$ is a non-degenerate $1$-space and each $U_i$ is a non-degenerate $2$-space.

Suppose $x \in G$ is a $t_1$-involution. Then
\begin{equation}\label{e:psu1}
|x^G| = \frac{|{\rm GU}_n(q)|}{|{\rm GU}_{n-1}(q)||{\rm GU}_1(q)|} = \frac{q^{n-1}(q^n-(-1)^n)}{q+1} = b_1
\end{equation}
and by arguing as in the proof of Proposition \ref{p:psl} (see Case 2), we can show that there are at most 
\[
a_1 = 1+ \binom{m}{1}\frac{|{\rm GU}_2(q)|}{(q+1)^2} = 1+mq(q-1)
\]
such involutions in $L$ (and therefore at most $a_1$ in $H$ itself). And if $x \in G$ is any other type of involution, then Proposition \ref{p:inv} gives
\begin{equation}\label{e:psu2}
|x^G| \geqs \frac{|{\rm GU}_n(q)|}{|{\rm GU}_{n-2}(q)||{\rm GU}_2(q)|} = \frac{q^{2n-4}(q^{n-1}-(-1)^{n-1})(q^n-(-1)^n)}{(q^2-1)(q+1)} = b_2
\end{equation}
(minimal if $x$ is a $t_2$-involution). In addition, since 
$|G:G_0|_2 \leqs 2^{1+\b}$, we compute  
\[
|H| \leqs 2^{n+\b}(q-1)^{m}(m!)_2 < 2^{n+m+\b}(q-1)^{m} = a_2.
\]
Then the upper bound in \eqref{e:two} holds and it is easy to check that this gives $Q(G,H)< 1$ in all cases.

\vs

\noindent \emph{Case 2. $n$ odd, $q \equiv 3 \imod{4}$}

\vs

Here $n = 2m+1 \geqs 5$, $\a = 2(q+1)$ and we note that  
\[
|H_0| = \a^m((q+1)_2)^m(m!)_2 = 2^{m}(q+1)^{n-1}(m!)_2 = (q+1)^{n-1}(n!)_2
\]
(see Table \ref{tab:syl}). In particular, we deduce that $H$ is contained in a subgroup $L$ of $G$ of type ${\rm GU}_1(q) \wr S_n$.  

If $x \in G$ is a $t_1$-involution then \eqref{e:psu1} holds and by arguing as in the proof of Proposition \ref{p:psl} (see Case 1), we deduce that there are at most $a_1 = n + \binom{n}{2}(q+1)$ such involutions in $H$. And if $x \in G$ is any other type of involution, then \eqref{e:psu2} holds and we note that $|H| \leqs 2^n(q+1)^{n-1} = a_2$. By defining $b_1$ and $b_2$ as in Case 1, we deduce that \eqref{e:two} holds and this gives $Q(G,H)<1$ unless $(n,q) = (5,7)$, or if $q = 3$ and $n \leqs 25$.

So to complete the proof in Case 2, we may assume $(n,q) = (5,7)$, or $q = 3$ and $5 \leqs n \leqs 25$. Firstly, by setting $a_2 = 2(q+1)^{n-1}(n!)_2$ we find that the upper bound in \eqref{e:two} is sufficient unless $q = 3$ and $n \leqs 17$. In addition, we recall that the case $(n,q) = (5,3)$ was handled in Proposition \ref{p:comp}, so we may assume $q = 3$ and $7 \leqs n \leqs 17$ is odd. To deal with the remaining cases, define $a_1$, $b_1$ and $b_2$ as above, and observe that there are no more than
\begin{equation}\label{e:psua2}
\begin{aligned}
a_2 & = \binom{n}{2} + \binom{n}{2}|{\rm GU}_1(q)|\cdot \binom{n-2}{1} + \frac{n!}{2!(n-4)!2^2}|{\rm GU}_1(q)|^2 \\
& = \frac{1}{2}n(n-1)\left(1+(n-2)(q+1)+\frac{1}{4}(n-2)(n-3)(q+1)^2\right)
\end{aligned}
\end{equation}
involutions of type $t_2$ in $L$. By inspecting \cite[Table 4.5.1]{GLS}, we note that if $x \in G$ is any other involution (that is to say, $x$ is not a $t_k$-involution with $k=1,2$) then 
\begin{equation}\label{e:psu3}
|x^G| \geqs \frac{|{\rm GU}_n(q)|}{|{\rm GU}_{n-3}(q)||{\rm GU}_3(q)|} > \frac{1}{2}\left(\frac{q}{q+1}\right)q^{6n-18} = b_3.
\end{equation}
It follows that \eqref{e:three} holds with $a_3 = 2(q+1)^{n-1}(n!)_2$ and one checks that this upper bound is sufficient for all $n \geqs 7$.

\vs

\noindent \emph{Case 3. $n$ even, $q \equiv 1 \imod{4}$}

\vs

Write $n = 2m \geqs 4$ and note that $d = (n,q+1) = 2$. Then $\a = 2(q-1)$ and from Table \ref{tab:syl} we get 
\[
|H_0| = \frac{1}{2}\a^m((q+1)_2)^{m-1}(m!)_2,
\]
which means that $H$ is contained in a subgroup $L$ of $G$ of type ${\rm GU}_2(q) \wr S_m$. Note that if $x \in G$ is a $t_1$-involution then \eqref{e:psu1} holds and by arguing as in the proof of Proposition \ref{p:psl} (see Case 4), we deduce that there are at most 
\[
a_1 = \binom{m}{1}\frac{|{\rm GU}_2(q)|}{|{\rm GU}_1(q)|^2} = mq(q-1)
\]
such elements in $H$.

For now, let us assume $n \geqs 8$. Then Proposition \ref{p:inv} implies that $|x^G| \geqs b_2$ if $x$ is any other involution in $G$, where $b_2$ is defined in \eqref{e:psu2}. And since $|H| \leqs 2^{3m+\b}(q-1)^m = a_2$, we deduce that \eqref{e:two} holds, which in turn allows us to conclude that $Q(G,H)<1$. 

Now suppose $n = 6$. If $x \in G$ is a non-$t_1$ involution, then $|x^G| \geqs b_2$, where 
\begin{equation}\label{e:psu4}
b_2 = \frac{|G_0|}{|{\rm Sp}_6(q)|} = q^6(q^3+1)(q^5+1)
\end{equation}
(with equality possible if $x$ is a $\gamma_1$ graph automorphism). It follows that  
\eqref{e:two} holds with $a_2 = 2^{2+\b}|H_0| = 2^{7+\b}(q-1)^3$ and one checks that this upper bound yields $Q(G,H)<1$.

Finally, let us assume $n=4$. The cases $q \in \{5,9,17\}$ were handled in Proposition \ref{p:comp}, so we may assume $q \geqs 257$ is a Fermat prime. Here $|H| \leqs 2^5(q-1)^2 = a_2$ and we note that
\[
|x^G| \geqs \frac{|G_0|}{|{\rm Sp}_4(q)|} = \frac{1}{2}q^2(q^3+1) = b_2
\]
for every non-$t_1$ involution $x \in G$ (see Proposition \ref{p:inv}). It follows that \eqref{e:two} holds and we get  $Q(G,H)<1$ for all $q > 257$ (recall that $q$ is a Fermat prime, so the bound $q>257$ implies that $q \geqs 65537$). 

So to complete the argument, suppose $G_0 = {\rm U}_4(q)$ with $q=257$. Without loss of generality, we may assume $G = {\rm Aut}(G_0) = G_0.2^2$. As before, the contribution to $\widehat{Q}(G,H)$ from $t_1$-involutions is at most $a_1^2b_1^{-1}$, where $a_1$ and $b_1$ are defined as above. Also observe that if $x \in G$ is an involution, which is not of type $t_1$ nor $\gamma_1$, then 
\begin{equation}\label{e:psu5}
|x^G| \geqs \frac{|{\rm GU}_4(q)|}{2|{\rm GU}_2(q)|^2} = \frac{1}{2}q^4(q^2-q+1)(q^2+1) = f(q)
\end{equation}
(minimal if $x$ is a $t_2$-involution). Set $G_1 = {\rm PGU}_4(q) = G_0.2$ and $H_1 = H \cap G_1$, so we have $|H:H_1| = 2$. If $x \in H$ is a $\gamma_1$ graph automorphism, then $|x^G| = q^2(q^3+1) = b_2$ and we note that every graph automorphism in $H$ is contained in the coset $H_1x$, so there are at most $|H_1| = 2^{20} = a_2$ such elements in $H$. Then by setting $a_3 = |H| = 2^{21}$ and $b_3 = f(q)$, we deduce that \eqref{e:three} holds and this is good enough to force $Q(G,H)<1$.

\vs

\noindent \emph{Case 4. $n$ even, $q \equiv 3 \imod{4}$}

\vs

Here we have $n = 2m \geqs 4$, $\a = 2(q+1)$ and by inspecting Table \ref{tab:syl} we get 
\[
|H_0| = \frac{1}{d}\a^m(q+1)^{m-1}(m!)_2 = \frac{1}{d}(q+1)^{n-1}(n!)_2.
\]
Then $H$ is contained in a subgroup $L$ of $G$ of type ${\rm GU}_1(q) \wr S_n$. As usual, if $x \in G$ is a $t_1$-involution then $|x^G| = b_1$, where $b_1$ is given in \eqref{e:psu1}, and it is easy to check that there are at most $a_1 = n+\binom{n}{2}(q+1)$ such elements in $L$ (and therefore in $H$ as well).

Suppose $n \geqs 10$. If $x \in G$ is a $t_2$-involution then $|x^G| =b_2$, where $b_2$ is defined in \eqref{e:psu2}, and we see that there are at most $a_2$ such elements in $H$, where $a_2$ is defined in \eqref{e:psua2}. And if $x \in G$ is any other involution (that is, if $x$ is not a $t_k$-involution for $k=1$ or $2$) then $|x^G|$ is minimal when $x$ is a $t_3$-involution and thus $|x^G| > b_3$, where $b_3$ is defined in \eqref{e:psu3}. Then by setting $a_3 = 2^{n+1}(q+1)^{n-1}$ we deduce that \eqref{e:three} holds and the result follows.

The case $n=8$ is entirely similar. The only difference is that if $x \in G$ is an involution, which is not of type $t_1$ or $t_2$, then $|x^G|$ is minimal when $x$ is a $\gamma_1$ graph automorphism. This means that  
\[
|x^G| \geqs \frac{|G_0|}{|{\rm Sp}_8(q)|} \geqs \frac{1}{8}q^{12}(q^3+1)(q^5+1)(q^7+1) = b_3 
\]
and one checks that the upper bound in \eqref{e:three} is sufficient for all $q \geqs 7$. This completes the argument because the case $G_0 = {\rm U}_8(3)$ was handled in Proposition \ref{p:comp}.

Next assume $n = 6$. The case $q=3$ is treated in Proposition \ref{p:comp}, so we may assume $q \geqs 7$. As usual, the contribution to $\widehat{Q}(G,H)$ from $t_1$-involutions is at most $a_1^2b_1^{-1}$, where $a_1$ and $b_1$ are defined as above. And if $x \in G$ is any other involution, then $|x^G| \geqs b_2$, where $b_2$ is defined in \eqref{e:psu4}. Since $|H| \leqs 2^5(q+1)^5 = a_2$, we deduce that \eqref{e:two} holds and the result follows for all $q \geqs 31$. 

So to complete the proof for $G_0 = {\rm U}_6(q)$, we may assume $q = 7$. If $x \in G$ is a $\gamma_1$ involution then $|x^G| = b_2$ as in \eqref{e:psu4} and by appealing to the proof of \cite[Proposition 2.7]{FPR3} we deduce that there are at most 
\[
a_2 = \frac{6!}{3!2^3} \cdot |{\rm GU}_1(q)|^2 = 15(q+1)^2
\]
such elements in $L$ (and hence at most this number in $H$). And if $x \in G$ is an involution, which is not of type $t_1$ or $\gamma_1$, then $|x^G|$ is minimal when $x$ is of type $t_2$ and thus 
\[
|x^G| \geqs \frac{|{\rm GU}_6(q)|}{|{\rm GU}_4(q)||{\rm GU}_2(q)|} = \frac{q^8(q^5+1)(q^6-1)}{(q^2-1)(q+1)}= b_3.
\]
Since $|H_0| = 2^{18}$ it follows that $|H| \leqs 2^{20} = a_3$ and the bound in \eqref{e:three} is satisfied. This forces $Q(G,H)<1$ and the result follows.

Finally, let us assume $n=4$ and define $a_1$ and $b_1$ as above. If $x \in G$ is a $\gamma_1$ involution, then $|x^G| \geqs \frac{1}{2}q^2(q^3+1) = b_2$ and by inspecting the proof of \cite[Proposition 2.7]{FPR3} we calculate that there are no more than $a_2 = 3(q+1)$ such elements in $H$. And if $x \in G$ is any other involution (neither a $t_1$ nor a $\gamma_1$ involution), then $|x^G| \geqs b_3 = f(q)$ as defined in \eqref{e:psu5} and we note that $|H| \leqs 2^4(q+1)^3 = a_3$. Then \eqref{e:three} holds and the reader can check that this bound gives $Q(G,H)<1$ for all $q \geqs 31$. This completes the proof since the cases $q=3,7$ were handled in Proposition \ref{p:comp}. 
\end{proof}

\begin{prop}\label{p:psp}
The conclusion to Theorem \ref{t:main2} holds if $G_0 = {\rm PSp}_n(q)$.
\end{prop}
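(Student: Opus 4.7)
My plan mirrors Propositions \ref{p:psl} and \ref{p:psu}. Write $n = 2m \geqs 4$, set $\a = (q^2-1)_2$ and $H_0 = H \cap G_0$. By Proposition \ref{p:syl}, $|H_0| = \frac{1}{2}\a^m(m!)_2$, and since $|{\rm Sp}_2(q)|_2 = \a$ we have $H \leqs L$ for a subgroup $L \leqs G$ of type ${\rm Sp}_2(q) \wr S_m$, namely (the image in $G$ of) the stabiliser of a symplectic decomposition
\[
V = V_1 \perp V_2 \perp \cdots \perp V_m
\]
of the natural module into non-degenerate $2$-spaces. The small cases $G_0 \in \{{\rm PSp}_4(3), {\rm PSp}_4(5), {\rm PSp}_4(7), {\rm PSp}_4(9), {\rm PSp}_4(17), {\rm PSp}_6(3), {\rm PSp}_8(3), {\rm PSp}_{10}(3)\}$ are handled by Proposition \ref{p:comp} and may be excluded.

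For $n \geqs 6$ I partition $I_2(G)$ into the $t_1$-class and everything else. By Proposition \ref{p:inv}(iii) the $t_1$-elements form a single $G$-class of size $b_1 = q^{n-2}(q^n-1)/(q^2-1)$, and every other involution $y$ satisfies $|y^G| \geqs b_2 = f(n,q)$. Writing $g = (x_1,\ldots,x_m)\sigma \in L$, an elementary computation shows that the $(-1)$-eigenspace of $g$ has dimension $2s + 2|\{k : \sigma(k)=k,\ x_k=-I_2\}|$, where $s$ is the number of transpositions in $\sigma$; and passing to ${\rm PSp}_n(q) = {\rm Sp}_n(q)/\{\pm I\}$ identifies a $t_1$-involution with $t_{m-1}$-lifts as well. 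Enumerating both types yields at most
\[
a_1 = m + \tbinom{m}{2}q(q^2-1)
\]
$t_1$-elements in $H$. Combined with the trivial bound $|H| \leqs a_2 := 2^{m+2+\delta_{9,q}}(q+1)^m$, Lemmas \ref{l:prob} and \ref{l:est} provide
\[
\widehat{Q}(G,H) \leqs a_1^2 b_1^{-1} + a_2^2 b_2^{-1},
\]
and a routine numerical check (using $b_1 \sim q^{2n-2}$ and $a_1 \sim m^2 q^3$) forces this bound to be less than $1$ in every surviving case.

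For $n = 4$ the approach is more delicate: Proposition \ref{p:inv}(iv) only guarantees that the smallest classes have size $\geqs \frac{1}{2}q^2(q^2-1)$, and the crude overgroup count above is no longer adequate. Instead I count directly within $H$ itself: $H$ is a Sylow $2$-subgroup of the image in ${\rm PSp}_4(q)$ of $Q \wr C_2$, where $Q \leqs {\rm Sp}_2(q)$ is a (generalised quaternion) Sylow $2$-subgroup of order $\a$. The $t_1$-involutions in $H$ reduce either to the image of $(-I_2, I_2)$ or to a swap $(x, x^{-1})(1\,2)$ with $x \in Q$, giving at most $1 + \a/2 \leqs \a$ such elements. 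The "imaginary" involutions of types $t_2, t_2'$ (images of $g \in L$ with $g^2 = -I$) number at most $O(\a^2)$ in $H$ by a similar enumeration, and any graph- or field-automorphism classes (arising only when $q = 9$) are bounded below by $f(4,q)$ from Proposition \ref{p:inv}(iv) and trivially via $|H|$. A three-term partition of $I_2(G)$ now yields a bound of the form \eqref{e:three} that forces $\widehat{Q}(G,H) < 1$ for every surviving Mersenne or Fermat prime $q$.

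The main obstacle I expect is the $n = 4$ case, where the smallest involution class size $\sim \frac{1}{2}q^4$ in ${\rm PSp}_4(q)$ is too small for an $O(q^3)$ overgroup count in $L$ to close the argument, forcing the finer count inside $H$ itself via the Sylow structure of ${\rm SL}_2(q)$. A secondary challenge is the combinatorial accounting of order-$4$ lifts $g \in L$ with $g^2 = -I$, whose images in ${\rm PSp}_n(q)$ form anisotropic $t_m/t_m'$-type involutions that must be carefully separated from the $t_1$-class in the bookkeeping; the relevant classification of involutions from \cite[Table 4.5.1]{GLS} confirms that such projections do not contribute to the $t_1$-class for $n \geqs 6$.
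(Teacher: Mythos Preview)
Your treatment of $n \geqs 6$ matches the paper's almost exactly: the same overgroup $L$ of type ${\rm Sp}_2(q) \wr S_m$, the same count $a_1 = m + \binom{m}{2}|{\rm Sp}_2(q)|$ of $t_1$-involutions, and the same two-term estimate via Proposition~\ref{p:inv}(iii). One slip: your bound $|H| \leqs 2^{m+2+\b}(q+1)^m$ is false for Mersenne $q$ once $m \geqs 6$, since then $(m!)_2 > 4$ (e.g.\ for $(m,q)=(6,3)$ one has $|H_0| = 2^{21}$ but your bound gives $2^{20}$). The correct crude bound is $|H| \leqs 2^{2m+\b}(q+1)^m$; the paper uses $a_2 = 2^{n+\b}(q+1)^m$ for $n \geqs 10$ and tailors the constants for $n \in \{6,8\}$. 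This is easily repaired and does not affect the strategy.

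The genuine gap is in your $n = 4$ sketch. After excluding $q \leqs 17$ you have $q$ an odd prime, so $G = {\rm PGSp}_4(q)$ and ${\rm Out}(G_0) = C_2$ is purely diagonal; there are no graph or field automorphisms in play, so that part of your partition is empty. More seriously, your description of $H$ as the image of $Q \wr C_2$ inside ${\rm PSp}_4(q)$ gives only $H_0$. The paper identifies $H = D_{2(q-\e)} \wr S_2$ (with $\e = \pm1$ according to $q \bmod 4$), and the involutions in $H \setminus H_0$ must be accounted for. Crucially, there are \emph{two} classes of size $\sim \tfrac12 q^4$ in $G$: the $t_1$-class of size $\tfrac12 q^2(q^2+1)$ and the $t_1'$-class of size $\tfrac12 q^2(q^2-1)$, one lying in $G_0$ and the other in $G \setminus G_0$. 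Your single small-class term with $a_1 \leqs \a$ handles only the first, and your $O(\a^2)$ estimate for the $t_2/t_2'$ contribution is useless against a denominator of order $q^4$. The paper resolves this by computing $i_2(H)$ exactly, lifting to $J = Q_{2(q-\e)} \wr S_2 < {\rm Sp}_4(q)$ to count the $t_1$- and $t_2$-type involutions in $H_0$ precisely (separating them by whether the lift has order $2$ or $4$), and then bounding the number of $t_1'$-involutions by subtraction; only with this refined three-term bound does the estimate drop below $1$. Your sketch does not carry out this separation, and without it the inequality does not close.
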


\begin{proof}
Here $n = 2m \geqs 4$ and we set $H_0 = H \cap G_0$ and $\a = (q^2-1)_2$ as before, where $H$ is a Sylow $2$-subgroup of $G$. As in \cite[Table 4.5.1]{GLS}, we say that an involution $x \in G$ is of type $t_1$ if it is $G_0$-conjugate to $y = \hat{y}Z$, where $\hat{y} = (-I_2,I_{n-2}) \in {\rm Sp}_n(q)$ is a diagonal matrix (with non-degenerate eigenspaces) and $Z$ is the centre of ${\rm Sp}_n(q)$. As usual, set $\b = \delta_{9,q}$.

\vs

\noindent \emph{Case 1. $n \geqs 6$}

\vs

By inspecting Table \ref{tab:syl}, we see that $|H_0| = \frac{1}{2}\a^m(m!)_2$ and thus $H$ is contained in a subgroup $L$ of $G$ of type ${\rm Sp}_2(q) \wr S_m$. In particular, if $x$ is a $t_1$-involution, then 
\begin{equation*}
|x^G| = \frac{|{\rm Sp}_{n}(q)|}{|{\rm Sp}_{n-2}(q)||{\rm Sp}_2(q)|} = \frac{q^{n-2}(q^n-1)}{q^2-1} = b_1
\end{equation*}
and by counting in $L$ we deduce that there are at most 
\begin{equation*}
a_1 = \binom{m}{1} + \binom{m}{2}|{\rm Sp}_2(q)| = m\left(1+\frac{1}{2}(m-1)q(q^2-1)\right)
\end{equation*}
such elements in $H$.

First assume $n \geqs 10$. If $x \in G$ is a non-$t_1$ involution, then Proposition \ref{p:inv} implies that 
\begin{equation*}
|x^G| \geqs \frac{|{\rm Sp}_{n}(q)|}{|{\rm Sp}_{n-4}(q)||{\rm Sp}_4(q)|} > \frac{1}{2}q^{4n-16} = b_2.
\end{equation*}
In addition, $|H| \leqs 2^{n+\b}(q+1)^{m} = a_2$ and one can check that the upper bound in \eqref{e:two} is good enough unless $(n,q) = (10,3)$. But the latter case was handled in Proposition \ref{p:comp}, so the result follows.

Similarly, if $n=8$ and $x$ is a non-$t_1$ involution, then 
\[
|x^G| \geqs \frac{|{\rm Sp}_{8}(q)|}{2|{\rm Sp}_{4}(q^2)|}  = \frac{1}{2}q^8(q^2-1)(q^6-1) = b_2
\]
and we have $|H| \leqs 2^{7+\b}(q+1)^4 = a_2$. Then \eqref{e:two} holds and we immediately deduce that $Q(G,H) < 1$ if $q \geqs 5$. This completes the argument since we treated the case $G_0 = {\rm PSp}_8(3)$ in Proposition \ref{p:comp}.

Next assume $n=6$. If $x$ is a non-$t_1$ involution, then 
\[
|x^G| \geqs \frac{|{\rm Sp}_{6}(q)|}{2|{\rm GU}_{3}(q)|} = \frac{1}{2}q^{6}(q-1)(q^2+1)(q^3-1) = b_2
\]
(see Proposition \ref{p:inv}) and we note that $|H| \leqs 2^{4+\b}(q+1)^3 = a_2$. Then \eqref{e:two} holds and we deduce that $Q(G,H)<1$ for all $q \geqs 5$. This just leaves the case $G_0 = {\rm PSp}_6(3)$, which was handled in Proposition \ref{p:comp}.

\vs

\noindent \emph{Case 2. $n=4$}

\vs

For the remainder of the proof, we may assume $n = 4$. The groups with $q \leqs 17$ can be handled using {\sc Magma} (see Proposition \ref{p:comp}), so we may assume $q \geqs 31$ is a Fermat or Mersenne prime. In particular, $G = {\rm PGSp}_4(q) = G_0.2$. Write $G_0 = L/Z$ and $H_0 = J/Z$, where $Z$ is the centre of $L = {\rm Sp}_4(q)$, and note that
\begin{align*}
J & = Q_{2(q-\e)} \wr S_2 < {\rm Sp}_2(q) \wr S_2 \\
H & = D_{2(q-\e)} \wr S_2,
\end{align*}
where $\e = 1$ if $q \equiv 1\imod{4}$, otherwise $\e= -1$ (here $Q_{2(q-\e)}$ denotes the quaternion group of order $2(q-\e)$). In particular, $|H| = 8(q-\e)^2$. Given a finite group $X$, we recall that $i_2(X)$ denotes the number of involutions in $X$.

To begin with, we will assume $q \equiv 1 \imod{4}$, so $q \geqs 257$ is a Fermat prime and
\[
i_2(H) = (q+1)^2 -1 + 2(q-1) = q^2+4q-2.
\]
There are four $G$-classes of involutions, with representatives labelled $t_1,t_2,t_1'$ and $t_2'$ in \cite[Table 4.5.1]{GLS}. Here $t_1,t_2 \in G_0$ and $t_1',t_2' \in G \setminus G_0$. It will be useful to note that the involutions of type $t_1$ lift to involutions in $L$, whereas those of type $t_2$ lift to elements of order $4$.

Suppose $x \in G$ is a $t_1$-involution. Then 
\[
|x^G| = \frac{|{\rm Sp}_{4}(q)|}{2|{\rm Sp}_2(q)|^2} = \frac{1}{2}q^2(q^2+1) = b_1
\]
and we note that $x = \hat{x}Z \in L/Z$ with $\hat{x} \in L$ an involution of type $(-I_2,I_2)$. Now 
\[
i_2(J) = 2q+|Z|-1 = 2q+1
\]
since $Q_{2(q-1)}$ has a unique involution, and we deduce that exactly $q$ involutions in $H_0$ lift to an involution in $J$. This means that $H$ contains $a_1 = q$ involutions of type $t_1$. Similarly, we calculate that there are precisely $q^2+4q-1$ elements $y \in J$ of order $4$ with $y^2 \in Z$ and this allows us to conclude that $H$ contains $(q^2+4q-1)/2$ involutions of type $t_2$.

Now let $x \in G$ be an involution of type $t_1'$ in the notation of \cite[Table 4.5.1]{GLS}. Here $x \in G \setminus G_0$ and
\[
|x^G| = \frac{|{\rm Sp}_{4}(q)|}{2|{\rm Sp}_2(q^2)|} = \frac{1}{2}q^2(q^2-1) = b_2.
\]
Moreover, our previous calculations show that there are at most 
\[
a_2 = i_2(H) - q - \frac{1}{2}(q^2+4q-1) = \frac{1}{2}(q^2+2q-3)
\]
such involutions in $H$.

Now, if $x \in G$ is an involution of type $t_2$ or $t_2'$, then 
\[
|x^G| \geqs \frac{|{\rm Sp}_{4}(q)|}{2|{\rm GU}_{2}(q)|} = \frac{1}{2}q^{3}(q-1)(q^2+1) = b_3
\]
and we note that $|H| = 8(q-1)^2 = a_3$. It follows that \eqref{e:three} holds and we conclude that $Q(G,H)<1$ for all $q \geqs 257$.

A very similar argument applies when $q \equiv 3 \imod{4}$. Here the involutions in $G$ of type $t_1$ and $t_2'$ are contained in $G_0$, whereas those of type $t_1'$ and $t_2$ are in $G \setminus G_0$. Since $H = D_{2(q+1)} \wr S_2$ we compute
\[
i_2(H) = (q+3)^2 - 1 +2(q+1) = q^2+8q+10
\]
and as before we calculate that $H$ contains $a_1 = q+2$ involutions of type $t_1$. By counting the elements $y \in J$ of order $4$ with $y^2 \in Z$, we deduce that there are $(q^2+8q+11)/2$ involutions of type $t_2'$ in $H$. As a consequence, it follows that $H$ contains at most 
\[
a_2 = i_2(H) - (q+2) - \frac{1}{2}(q^2+8q+11) = \frac{1}{2}(q^2+6q+5)
\]
involutions of type $t_1'$. Putting this together, we deduce that \eqref{e:three} holds with $a_3 = 8(q+1)^2$ and $a_1,a_2,b_1,b_2,b_3$ defined as above. It is now easy to check that this upper bound on $Q(G,H)$ is less than $1$ for all $q \geqs 31$.
\end{proof}

\begin{prop}\label{p:o_odd}
The conclusion to Theorem \ref{t:main2} holds if $G_0 = \O_n(q)$.
\end{prop}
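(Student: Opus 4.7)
The plan is to follow the same template used in Propositions \ref{p:psl}--\ref{p:psp}. Set $n = 2m+1 \geqs 7$, $\a = (q^2-1)_2$ and $\b = \delta_{9,q}$. Proposition \ref{p:syl} gives $|H_0| = \tfrac{1}{2}\a^m(m!)_2$, and since a Sylow $2$-subgroup of $O_2^{+}(q) = D_{2(q-1)}$ has order $2(q-1)_2$ while that of $O_2^{-}(q) = D_{2(q+1)}$ has order $2(q+1)_2$, the equality $\a = (q^2-1)_2$ places $H$ inside a natural overgroup $L \leqs G$ of type $(O_2^{\e}(q) \wr S_m) \perp O_1(q)$, where $\e = +$ if $q \equiv 1 \imod{4}$ and $\e = -$ if $q \equiv 3 \imod{4}$. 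The four cases $G_0 = \O_n(3)$ with $n \in \{7,9,11,13\}$ appear in the list $\mathcal{A}$ of Proposition \ref{p:comp}, so from the outset we may assume $q \geqs 5$ or $n \geqs 15$.

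Next I will split the involutions of $G$ using Proposition \ref{p:inv}(v): either (a) $x$ is of type $t_m$ or $t_m'$, with $|x^G| \geqs \tfrac{1}{2}q^m(q^m-1) =: b_1$, or (b) $|x^G| \geqs \tfrac{q^{n-2}(q^{n-1}-1)}{2(q+1)} =: b_2$. The involutions in (a) are precisely those with a $(2m)$-dimensional $(-1)$-eigenspace on the natural module. Writing such an element of $L$ as $(x_1,\ldots,x_m;y)\pi$ with $x_i \in O_2^{\e}(q)$, $y \in O_1(q)$ and $\pi \in S_m$, a short analysis shows that any transposition appearing in $\pi$ contributes only a $2$-dimensional $(-1)$-eigenspace on the swapped pair of blocks, which forces $\pi = 1$; the only surviving configurations are $(y,x_1,\ldots,x_m) = (1,-I,\ldots,-I)$, and $(-1,x_1,\ldots,x_m)$ with exactly one $x_i$ a reflection and the remainder equal to $-I$. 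This yields an upper bound $a_1 = 1 + m(q-\e)$ on the number of such involutions in $H$. For case (b) I will use the crude bound $|x^G \cap H| \leqs |H| \leqs 2^{\b}\a^m(m!)_2 =: a_2$.

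Plugging into Lemma \ref{l:est} gives
\[
\widehat{Q}(G,H) \leqs a_1^2/b_1 + a_2^2/b_2.
\]
Since $a_1 = O(mq)$ and $b_1 \sim \tfrac{1}{2}q^{2m}$, while $a_2 = O(2^{2m}q^m)$ and $b_2 \sim \tfrac{1}{2}q^{4m-1}$, this upper bound will be strictly below $1$ throughout the admissible range $q \geqs 5$, $m \geqs 3$; Lemma \ref{l:prob} then delivers $b(G,H) = 2$.

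The main obstacle I anticipate is the bookkeeping involved in bounding the number of type $t_m/t_m'$ involutions in $H$: one has to enforce the determinant and spinor norm constraints that distinguish $\O_n(q)$ inside $O_n(q)$ to verify that passing from $L$ to $G_0$ does not inflate $a_1$, and one must also account for any outer involutions of $G$ that happen to fall into this class (for example, diagonal involutions arising in ${\rm SO}_n(q) \setminus \O_n(q)$ when $|G:G_0|$ is even, or the field involution when $q=9$). Once this count is pinned down, all that remains is checking the inequality $a_1^2/b_1 + a_2^2/b_2 < 1$ by hand in the narrow strip of borderline $(n,q)$ where the asymptotic comparison is least comfortable; the rest is entirely analogous to Case 1 of Proposition \ref{p:psp}.
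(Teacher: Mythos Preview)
Your approach is essentially the paper's, and your count $a_1 = 1 + m(q-\e)$ agrees with the paper's (the worries you raise about spinor norm and outer involutions are non-issues, since you are only bounding $|x^G \cap H|$ from above by counting inside the overgroup $L$). However, there is a genuine gap: you reduce to ``$q \geqs 5$ or $n \geqs 15$'', but then only verify the two-term bound for $q \geqs 5$, $m \geqs 3$; the case $q = 3$, $n \geqs 15$ is never addressed.

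This is not an oversight you can patch by a hand check. For $q = 3$ one has $\a = 8$, so $a_2^2 < 2^{8m}$ while $b_2 \sim 3^{4m-1}/8$, and since $2^8 = 256 > 81 = 3^4$ the ratio $a_2^2/b_2$ grows without bound as $m \to \infty$. In other words, the two-term estimate \emph{never} suffices for $q = 3$, regardless of $n$. The fix (and this is exactly what the paper does) is to introduce a third tier: count the $s_2$-type involutions of $L$ (those with $(-1)$-eigenspace of dimension $2$) explicitly, obtaining something like
\[
a_2 = m(q+2) + \tfrac{1}{2}m(m-1)\big((q+1)^2 + 2(q+1)\big),
\]
and then use the crude bound $a_3 = |H|$ against $b_3 > \tfrac{1}{4}q^{3n-9}$ for all remaining involutions. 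With this three-term decomposition the inequality goes through for $n \geqs 17$, and $n = 15$ is handled by sharpening $a_3$ using the exact value of $(7!)_2$.
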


\begin{proof}
Here $n = 2m+1 \geqs 7$ and we set $H_0 = H \cap G_0$, where $H$ is a Sylow $2$-subgroup of $G$. For $k \in \{1,2\}$, we will say that an involution $x \in {\rm SO}_n(q) = G_0.2$ is of type $s_k$ if it is conjugate to an element of the form $(-I_{n-1},I_1)$ for $k=1$, or $(-I_2,I_{n-2})$ for $k = 2$ (in the notation of \cite[Table 4.5.1]{GLS}, these are the involutions of type $t_m$ or $t_m'$ (for $k=1$), and type $t_1$ or $t_1'$ (for $k=2$)). Note that if $x \in G$ is an involution of type $s_1$ then
\begin{equation}\label{e:o_odd1}
|x^G| \geqs \frac{|{\rm SO}_n(q)|}{2|{\rm SO}_{n-1}^{-}(q)|} = \frac{1}{2}q^{m}(q^{m}-1) = b_1.
\end{equation}
And similarly, if $x$ is an $s_2$-type involution then
\begin{equation}\label{e:o_odd2}
|x^G| \geqs \frac{|{\rm SO}_n(q)|}{2|{\rm SO}_{n-2}(q)||{\rm SO}_{2}^{-}(q)|} = \frac{q^{n-2}(q^{n-1}-1)}{2(q+1)} = b_2.
\end{equation}
As recorded in Table \ref{tab:syl}, we have $|H_0| = \frac{1}{2}\a^m(m!)_2$, where $\a = (q^2-1)_2$. Let $V$ be the natural module for $G_0$. As before, set $\b = \delta_{9,q}$.

\vs

\noindent \emph{Case 1. $q \equiv 1 \imod{4}$}

\vs

First we assume $q \equiv 1 \imod{4}$, so $|H_0| = \frac{1}{2}(2(q-1))^m(m!)_2$ and we note that $H$ is contained in a subgroup $L$ of $G$ of type 
\[
({\rm O}_2^{+}(q) \wr S_m) \perp {\rm O}_{1}(q) < {\rm O}_{2m}^{+}(q) \perp {\rm O}_1(q).
\]
Note that $L$ is the stabiliser in $G$ of an orthogonal decomposition of the form
\[
V = U \perp W = (U_1 \perp U_2 \perp \cdots \perp U_m) \perp W,
\]
where each $U_i$ is a $2$-dimensional non-degenerate orthogonal space of plus-type and $W$ is a non-degenerate $1$-space (recall that a $2$-dimensional orthogonal space is of \emph{plus-type} if it contains a non-zero singular vector with respect to the defining quadratic form). In particular, if $G = {\rm SO}_n(q) = G_0.2$, then
\[
L = D_{2(q-1)} \wr S_m = {\rm O}_2^{+}(q) \wr S_m < {\rm O}_{2m}^{+}(q) < G.
\]

Let $x \in G$ be an $s_1$-type involution, so $|x^G| \geqs b_1$ as in \eqref{e:o_odd1}. Working in $L$, and noting that $D_{2(q-1)}$ has exactly $q-1$ non-central involutions, we calculate that $H$ contains at most 
\[
a_1 = \binom{m}{1}(q-1) +1 = m(q-1)+1
\]
such elements. And if $x \in G$ is any other type of involution, then Proposition \ref{p:inv} implies that $|x^G| \geqs b_2$ with $b_2$ defined as in \eqref{e:o_odd2} and we note that $|H| \leqs 2^{n-1+\b}(q-1)^m = a_2$. It follows that $Q(G,H)$ satisfies the upper bound in \eqref{e:two} and this allows us to deduce that $Q(G,H)<1$ unless $(n,q) \in \{ (7,5), (7,9), (9,5)\}$. In the latter cases, by setting $a_2 = 2^{m+\b}(q-1)^m(m!)_2$, one can check that \eqref{e:two} is sufficient. 

\vs

\noindent \emph{Case 2. $q \equiv 3 \imod{4}$}

\vs

Here $|H_0| = \frac{1}{2}(2(q+1))^m(m!)_2$ and $H$ is contained in a subgroup $L$ of $G$ of type 
\[
({\rm O}_2^{-}(q) \wr S_m) \perp {\rm O}_{1}(q) < {\rm O}_{2m}^{\e}(q) \perp {\rm O}_1(q),
\]
where $\e = +$ if $m$ is even, otherwise $\e=-$. By arguing as in Case 1, we deduce that the contribution to $\widehat{Q}(G,H)$ from $s_1$-involutions is at most $a_1^2b_1^{-1}$, where $a_1 = m(q+1)+1$ and $b_1$ is defined in \eqref{e:o_odd1}. And if $x \in G$ is an $s_2$-type involution, then $|x^G| \geqs b_2$ as above and we note that $|H| \leqs 2^{n-1}(q+1)^m = a_2$. Then as in Case 1, we deduce that \eqref{e:two} holds and for $q \geqs 7$ we check that this bound is sufficient unless $q = 7$ and $n \in \{7,9,11\}$. But in each of these cases, by setting $a_2 = 2^m(q+1)^m(m!)_2$, we obtain $Q(G,H)<1$ via \eqref{e:two}.

Now suppose $q=3$. In view of Proposition \ref{p:comp}, we may assume $n \geqs 15$. Here we need a more accurate estimate for the number of $s_2$-involutions in $H$. By working in $L$, we calculate that there are at most $a_2$ such elements, where
\begin{align*}
a_2 & = \binom{m}{1}+\binom{m}{1}(q+1) + \binom{m}{2}(q+1)^2 + \binom{m}{2}|{\rm O}_2^{-}(q)| \\
& = m(q+2)+\frac{1}{2}m(m-1)\left((q+1)^2+2(q+1)\right).
\end{align*}
And if $x \in G$ is an involution, which is not of type $s_1$ or $s_2$, then it is easy to see that 
\[
|x^G| \geqs \frac{|{\rm SO}_n(q)|}{2|{\rm SO}_{n-3}^{-}(q)||{\rm SO}_3(q)|} > \frac{1}{4}q^{3n-9} = b_3.
\]
Setting $a_3 = 2^{n-1}(q+1)^m$, it follows that \eqref{e:three} holds and one can check that this upper bound is sufficient if $n \geqs 17$. Finally, if $n=15$ then $(m!)_2 = 2^4$ and so we can set $a_3 = 2^{11}(q+1)^7$ in \eqref{e:three} and we deduce that $Q(G,H)<1$ as required.
\end{proof}

\begin{prop}\label{p:o_even1}
The conclusion to Theorem \ref{t:main2} holds if $G_0 = {\rm P\O}_n^{+}(q)$.
\end{prop}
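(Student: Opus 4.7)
The plan is to follow the template established in Propositions \ref{p:psl}--\ref{p:o_odd}, combining Lemma \ref{l:prob} with the elementary estimate of Lemma \ref{l:est}. Write $n = 2m \geqs 8$, set $H_0 = H \cap G_0$ and $\a = (q^2-1)_2$, and note that it suffices to exhibit $\widehat{Q}(G,H) < 1$ in each case. The smallest cases with $q=3$, namely $n \in \{8,10,12\}$, are already handled by Proposition \ref{p:comp} since ${\rm P\O}_n^{+}(3) \in \mathcal{A}$ for these values.

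I would split the argument on $q \bmod 4$. For $q \equiv 1 \imod{4}$ (so $q=9$ or $q$ is a Fermat prime), one has $\a = 2(q-1)$, and Table \ref{tab:syl} forces $H$ into a subgroup $L \leqs G$ of type ${\rm O}_2^{+}(q) \wr S_m$, i.e.\ the stabiliser of an orthogonal decomposition $V = U_1 \perp \cdots \perp U_m$ into non-degenerate plus-type $2$-spaces. For $q \equiv 3 \imod{4}$ (so $q$ is a Mersenne prime), one takes instead an overgroup built from minus-type $2$-spaces: when $m$ is even all $m$ summands have minus-type (which sum to plus-type), while when $m$ is odd one summand is replaced by a plus-type $2$-space so that the discriminant is correct, giving an overgroup of type $({\rm O}_2^{-}(q) \wr S_{m-1}) \perp {\rm O}_2^{+}(q)$. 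In both cases this reduces counting involutions in $H$ to counting them inside a manageable wreath product.

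By Proposition \ref{p:inv}(vi), the minimal class size is $b_1 = \tfrac{1}{2}q^{m-1}(q^m-1)$, attained by a graph automorphism $\gamma_1$, and every other involution $x \in G$ satisfies $|x^G| \geqs b_2 = \tfrac{q^{n-2}(q^m-1)(q^{m-1}-1)}{2(q+1)}$. I would bound $a_1 = |\gamma_1^G \cap H|$ by enumerating graph involutions in the overgroup $L$, using the orbit picture on the decomposition $V = \perp_i U_i$ (as in the proof of \cite[Proposition 2.7]{FPR3} used for ${\rm L}_4^\e(q)$ and ${\rm U}_6(q)$ in the preceding propositions), and use the trivial bound $|x^G \cap H| \leqs |H|$ for everything else, with $|H| = |H_0| \cdot |G:G_0|_2$ read off Table \ref{tab:syl}. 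Lemma \ref{l:est} then gives the two-term estimate $\widehat{Q}(G,H) \leqs a_1^2/b_1 + |H|^2/b_2$, and a routine inequality check should force this below $1$ for all but finitely many pairs $(n,q)$.

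The main obstacle I anticipate is twofold. First, the accurate count of graph-automorphism involutions in $H$ requires care, since one must account for graph automorphisms that permute the summands $U_i$ nontrivially as well as those preserving the decomposition; the bound $a_1$ will take a form analogous to the $\tfrac{n!}{(n/2)!\, 2^{n/2}}(q\pm 1)^{n/2-1}$ counts used in the ${\rm U}_6(q)$ and ${\rm L}_4^\e(q)$ analyses. Second, the case $q=3$ with $n \geqs 14$ has $|H|$ comparatively large relative to $q$, so the two-class bound will likely be insufficient; there I would refine as in the $q=3$ portion of Proposition \ref{p:o_odd}, introducing a third class---the $s_2$-type semisimple involutions $(-I_2,I_{n-2})$ inside $G_0$---with a sharper combinatorial count of their number in $L$, at the cost of replacing the trivial $|H|$-bound for remaining involutions by a lower bound of the form $|x^G| \geqs |{\rm SO}_n^+(q)|/(2|{\rm SO}_{n-3}^{-}(q)||{\rm SO}_3(q)|)$. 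The three-term estimate $\widehat{Q}(G,H) \leqs a_1^2/b_1 + a_2^2/b_2 + |H|^2/b_3$ should then close out the remaining $(n,q)=(14,3),(16,3),\ldots$ range, completing the proof.
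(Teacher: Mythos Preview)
Your overall strategy matches the paper's: split on $q \bmod 4$, embed $H$ in the indicated wreath-type overgroups, separate the $\gamma_1$ class, and apply Lemma \ref{l:est}. However, there are two substantive gaps.

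First, the two-term estimate $a_1^2/b_1 + |H|^2/b_2$ is far too weak to handle the generic case, not only $q=3$. Already for $(n,q)=(10,5)$ one has $|H|^2/b_2 > 1$, and the same happens for $(8,5)$ and many others. The paper uses a three-term bound (separating both $s_1$ and $s_2$) from the outset for all $n \geqs 10$, and even this fails for $(n,q)=(16,3)$, where a four-term split (isolating the $\gamma_3$-type graph automorphisms as well) is required. So the refinement you propose for $q=3$ is needed essentially everywhere, and occasionally needs to go one step further.

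Second, and more seriously, you do not address $n=8$. Here triality produces additional ${\rm Inndiag}(G_0)$-classes of involutions with centraliser of type ${\rm GL}_4^{\e}(q)$ whose class sizes coincide with those of the $s_2$-type involutions, so they lie in the range $[b_2,b_3)$ and cannot be absorbed into the $|H|^2/b_3$ term. Your count of $s_2$-involutions in $L$ does not see these classes. The paper handles this by observing that a triality automorphism $\tau$ permutes the three relevant classes and carries $H$ to a conjugate Sylow $2$-subgroup, so the number of involutions in $H$ of each type is equal; this gives $a_2$ as three times the $s_2$-count in $L$. The case $(n,q)=(8,9)$ is harder still, because involutory field and graph-field automorphisms enter; the paper resorts to a direct {\sc Magma} enumeration of involutions in a Sylow $2$-subgroup of ${\rm Aut}(G_0)$ to obtain the required bounds. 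Neither of these ingredients is anticipated in your outline.
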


\begin{proof}
Here $n = 2m \geqs 8$ and as usual we set $H_0 = H \cap G_0$, where $H$ is a Sylow $2$-subgroup of $G$. Set $d = (4,q^m-1)$ and $\b = \delta_{9,q}$.

For $k \in \{1,2\}$, we say that an involution $x \in {\rm PO}_n^{+}(q)$ is of type $s_k$ if it lifts to an involution in ${\rm O}_n^{+}(q)$ of the form $(-I_{n-k},I_k)$. If $x$ is an  $s_1$-type involution, then $x \in {\rm PO}_n^{+}(q) \setminus {\rm PSO}_n^{+}(q)$ is an involutory graph automorphism of $G_0$ (of type $\gamma_1$ in \cite[Table 4.5.1]{GLS}) and we have
\begin{equation}\label{e:oeven1}
|x^G| \geqs \frac{|{\rm SO}_n^{+}(q)|}{2|{\rm SO}_{n-1}(q)|} = \frac{1}{2}q^{m-1}(q^m-1) = b_1.
\end{equation}
Similarly, if $x$ is an $s_2$-involution then
\begin{equation}\label{e:oeven2}
|x^G| \geqs \frac{|{\rm SO}_n^{+}(q)|}{2|{\rm SO}_{n-2}^{-}(q)||{\rm SO}_{2}^{-}(q)|} = \frac{q^{n-2}(q^m-1)(q^{m-1}-1)}{2(q+1)} = b_2.
\end{equation}
Recall that ${\rm Inndiag}(G_0)$ is the index-two subgroup of ${\rm PGO}_{n}^{+}(q)$ generated by the inner and diagonal automorphisms of $G_0$.  

\vs

\noindent \emph{Case 1. $q \equiv 1 \imod{4}$}

\vs

Here $d = 4$ and we first observe that
\[
|H_0| = \frac{1}{8}(2(q-1))^m(m!)_2,
\]
which means that $H$ is contained in a subgroup $L$ of $G$ of type ${\rm O}_{2}^{+}(q) \wr S_m$.
 
To begin with, let us assume $n \geqs 10$. If $x \in G$ is an $s_1$-type involution, then $|x^G| \geqs b_1$ as in \eqref{e:oeven1} and by counting in $L$ we deduce that there are at most $a_1 = m(q-1)$ such elements in $H$. Similarly, if $x$ is an $s_2$-type involution, then $|x^G| \geqs b_2$ (see \eqref{e:oeven2}) and we calculate that $H$ contains no more than $a_2$ such involutions, where
\[
a_2 = \binom{m}{1} + \binom{m}{2}(q-1)^2 + \binom{m}{2}|{\rm O}_{2}^{+}(q)| = m+\frac{1}{2}m(m-1)((q-1)^2+2(q-1)).
\]
And if $x \in G$ is any other type of involution, then by inspecting \cite[Table 4.5.1]{GLS} we deduce that $|x^G|$ is minimal when $x$ is an involutory graph automorphism of type $(-I_3,I_{n-3})$. So in this situation we get 
\begin{equation}\label{e:oeven3}
|x^G| \geqs \frac{|{\rm SO}_n^{+}(q)|}{2|{\rm SO}_{n-3}(q)||{\rm SO}_{3}(q)|} > \frac{1}{4}q^{3n-9} = b_3
\end{equation}
and we note that $|H| \leqs 2^{n+\b}(q-1)^m = a_3$. Putting all of this together, we deduce that \eqref{e:three} holds and this implies that $Q(G,H)<1$. The result follows.

Now assume $n = 8$ and $q \equiv 1 \imod{4}$. To begin with, we will assume $q \geqs 5$ is a Fermat prime, noting that a separate argument for $q = 9$ will be given below.

As above, the contribution to $\widehat{Q}(G,H)$ from $s_1$-involutions is at most $a_1^2b_1^{-1}$. Next suppose $x \in G$ is an $s_2$-involution and recall that $|x^G| \geqs b_2$. If $C_G(x)$ is of type ${\rm O}_6^{\e}(q) \times {\rm O}_2^{\e}(q)$, then $x$ is conjugate under a triality graph automorphism $\tau$ of $G_0$ to an involution $y$ with $C_G(y)$ of type ${\rm GL}_4^{\e}(q)$. And in turn, $z = y^{\tau}$ is contained in a second ${\rm Inndiag}(G_0)$-class of involutions with $C_G(z)$ of type ${\rm GL}_4^{\e}(q)$. Since the number of $s_2$-involutions in $H$ is at most $4+6(q-1)^2+12(q-1)$, as noted above, it follows that there are no more than
\[
a_2 = 3(4+6(q-1)^2+12(q-1))
\]
involutions $x \in H$ with $C_G(x)$ of type ${\rm O}_6^{\e}(q) \times {\rm O}_2^{\e}(q)$ or ${\rm GL}_4^{\e}(q)$ for $\e = \pm$. And if $x$ is any other involution, then the fact that $q$ is a prime (so $G$ does not contain any involutory field or graph-field automorphisms) implies that
\begin{equation}\label{e:o8}
|x^G| \geqs \frac{|{\rm SO}_8^{+}(q)|}{2|{\rm SO}_{5}(q)||{\rm SO}_{3}(q)|} = \frac{1}{2}q^7(q^2+1)(q^6-1) = b_3
\end{equation}
as in \eqref{e:oeven3}. Finally, since $|H| \leqs 2^7(q-1)^4 = a_3$ we deduce that \eqref{e:three} holds and this upper bound yields $Q(G,H)<1$, as required.

To complete the proof for $n = 8$ with $q \equiv 1 \imod{4}$, we may assume $q = 9$. The analysis of this case is complicated by the fact that $G$ may contain involutory field or graph-field automorphisms. Using {\sc Magma} \cite{magma}, we begin by constructing $L = {\rm Aut}(G_0) = G_0.[48]$ as a permutation group of degree $1795800$ and we construct a Sylow $2$-subgroup $J$ of $L$. We then take a set of representatives of the conjugacy classes of involutions in $J$ and we deduce that $i_2(J) = 14495$. In addition, by computing $|x^L|$ for each involution $x \in J$, we find that $J$ contains exactly $a_1 = 32$ involutions of type $s_1$. As before, if $x$ is any other involution in $G$, then $|x^G| \geqs b_2$ as in \eqref{e:oeven2}. So by applying Lemma \ref{l:est}, we see that \eqref{e:two} holds with $b_1$ defined in \eqref{e:oeven1} and $a_2 = 14463$. It is easy to check that this estimate gives  $Q(G,H)<1$ and the result follows.

\vs

\noindent \emph{Case 2. $q \equiv 3 \imod{4}$}

\vs

First assume $m$ is even and note that
\[
|H_0| = \frac{1}{2d}(2(q+1))^m(m!)_2.
\]
It follows that $H$ is contained in a subgroup $L$ of $G$ of type ${\rm O}_{2}^{-}(q) \wr S_m$, which means that the analysis of this case is almost identical to our previous work in Case 1. 

First assume $n \geqs 12$. Here we see that \eqref{e:three} holds, where
\[
a_1 = m(q+1), \; a_2 = \frac{1}{2}m(m-1)((q+1)^2+2(q+1))+m,\; a_3 = 2^n(q+1)^m
\]
and $b_1$, $b_2$ and $b_3$ are defined in \eqref{e:oeven1}, \eqref{e:oeven2} and \eqref{e:oeven3}, respectively. Then one can check that this upper bound is sufficient unless $q=3$ and $n \in \{12,16,20\}$. The case $(n,q) = (12,3)$ was handled in Proposition \ref{p:comp}. For $(n,q) = (20,3)$ we have $|H| \leqs 2^{18}4^{10}$ since $(10!)_2 = 2^8$ and by setting $a_3 = 2^{18}4^{10}$ we check that the revised bound in \eqref{e:three} is good enough. 

Finally, suppose $(n,q) = (16,3)$. We may assume $G = {\rm PGO}_{16}^{+}(3) = G_0.D_8$. Set $H_1 = H \cap {\rm Inndiag}(G_0)$ and note that $|H:H_1| = 2$. If $x \in H$ is an involutory graph automorphism of type $(-I_3,I_{13})$, then 
\[
|x^G| \geqs \frac{|{\rm SO}_{16}^{+}(3)|}{2|{\rm SO}_{13}(3)||{\rm SO}_3(3)|} = 2279214355760562960 = b_3
\]
and we note that all such involutions are contained in the coset $H_1x$, so there are at most $|H_1| = 2^{30} = a_3$ such elements in $H$. And if $x \in G$ is an involution that is not of type $s_1$ or $s_2$, nor a graph automorphism of type $(-I_3,I_{13})$, then 
\[
|x^G| \geqs \frac{|{\rm SO}_{16}^{+}(3)|}{2|{\rm SO}_{12}^{-}(3)||{\rm SO}_{4}^{-}(3)|} = 40320213639146662987584 = b_4.
\]
We deduce that 
\[
Q(G,H) < a_1^2b_1^{-1} + a_2^2b_2^{-1} + a_3^2b_3^{-1} + a_4^2b_4^{-1} < 1,
\]
where $a_4 = |H| = 2^{31}$, and the result follows.

Now assume $n=8$. In view of Proposition \ref{p:comp}, we are free to assume that $q \geqs 7$. Then by arguing as in Case 1, we deduce that  \eqref{e:three} holds, where $b_1$ and $b_2$ are defined as above, $b_3$ is given in \eqref{e:o8}, and we set
\[
a_1 = 4(q+1), \; a_2 = 18((q+1)^2+2(q+1))+12, \; a_3 = 2^7(q+1)^4,
\]
noting that $|H| \leqs a_3$. It is easy to check that this yields $Q(G,H)<1$ for all $q \geqs 7$ and the result follows.

To complete the proof, we may assume $m \geqs 5$ is odd. Here
\[
|H_0| = (2(q+1))^{m-1}((m-1)!)_2
\]
and we deduce that $H$ is contained in a subgroup $L$ of $G$ of type 
\[
({\rm O}_{2}^{-}(q) \wr S_{m-1}) \perp {\rm O}_{2}^{+}(q) < {\rm O}_{n-2}^{+}(q) \perp {\rm O}_{2}^{+}(q).
\]
If $x \in G$ is an $s_1$-involution, then $|x^G| \geqs b_1$ as in \eqref{e:oeven1} and by working in $L$ we deduce that there are at most 
\[
a_1 = \binom{m-1}{1}(q+1) + q-1 = m(q+1)-2
\]
such elements in $H$. Similarly, if $x$ is an $s_2$-involution then $|x^G| \geqs b_2$ with $b_2$ defined as in \eqref{e:oeven2} and we calculate that $H$ contains no more than $a_2$ such elements, where
\begin{align*}
a_2 & = \binom{m-1}{1} + \binom{m-1}{1}(q+1)(q-1) + \binom{m-1}{2}(q+1)^2  + \binom{m-1}{2}|{\rm O}_{2}^{-}(q)| + 1 \\
 & = \frac{1}{2}(m-1)(m-2)((q+1)^2+2(q+1)) + (m-1)q^2 + 1.
 \end{align*}
And if $x \in G$ is any other type of involution, then as in \eqref{e:oeven3} we have $|x^G|>\frac{1}{4}q^{3n-9} = b_3$ and we note that $|H| \leqs 2^n(q+1)^{m-1} = a_3$. Then one checks that the upper bound in \eqref{e:three} is sufficient unless $q=3$ and $n \in \{10,14\}$. The case $(n,q) = (10,3)$ was handled in Proposition \ref{p:comp}. And for $(n,q) = (14,3)$ we have $|H| \leqs 2^{24}$; by setting $a_3 = 2^{24}$ we find that the bound in \eqref{e:three} is good enough and the proof is complete.
\end{proof}

\begin{prop}\label{p:o_even2}
The conclusion to Theorem \ref{t:main2} holds if $G_0 = {\rm P\O}_n^{-}(q)$.
\end{prop}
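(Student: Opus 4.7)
The proof will follow closely the template of Proposition \ref{p:o_even1}, adjusted for the minus-type geometry. Write $n = 2m \geqs 8$ and set $H_0 = H \cap G_0$, $\a = (q^2-1)_2$ as usual. From Table \ref{tab:syl} we read off $|H_0|$ in two forms according to the parity of $m$, and in each of the four subcases determined by $(m \bmod 2,\, q \bmod 4)$ we identify a subgroup $L$ of $G$ containing $H$, arising as the stabiliser in $G$ of an orthogonal decomposition of the natural module into nondegenerate $2$-spaces whose types are chosen so that the overall discriminant produces a minus-type form. Concretely, when $q \equiv 1 \imod{4}$ we take $L$ of type $({\rm O}_2^{+}(q) \wr S_{m-1}) \times {\rm O}_2^{-}(q)$; when $q \equiv 3 \imod{4}$ and $m$ is odd we take $L$ of type ${\rm O}_2^{-}(q) \wr S_{m}$; and when $q \equiv 3 \imod{4}$ and $m$ is even we take $L$ of type $({\rm O}_2^{-}(q) \wr S_{m-1}) \times {\rm O}_2^{+}(q)$.

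As in the plus-type proof, we partition the involutions of $G$ into three families: $s_1$-type involutions (involutory graph automorphisms in ${\rm PO}_n^{-}(q) \setminus {\rm PSO}_n^{-}(q)$ lifting to $(-I_{n-1}, I_1)$), $s_2$-type involutions (inner-diagonal involutions lifting to $(-I_{n-2}, I_2)$), and all the rest. Proposition \ref{p:inv}(vi) with $\e = -$ yields the lower bounds
\[
b_1 = \tfrac{1}{2}q^{m-1}(q^m+1), \qquad b_2 = \frac{q^{n-2}(q^m+1)(q^{m-1}-1)}{2(q+1)}
\]
for $s_1$- and $s_2$-type involutions respectively, while the same centraliser analysis as in Proposition \ref{p:o_even1} supplies the estimate $|x^G| > \tfrac{1}{4}q^{3n-9} = b_3$ for any other involution (valid for $n \geqs 10$). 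Counting inside $L$ yields, in each subcase, bounds $a_1$ and $a_2$ on the numbers of $s_1$- and $s_2$-type involutions in $H$ of exactly the same shape as those obtained in Proposition \ref{p:o_even1}, and for the third family we use the trivial bound $a_3 = |H|$ read off from Proposition \ref{p:syl}. Substituting into Lemma \ref{l:est} and applying Lemma \ref{l:prob} via the three-term estimate \eqref{e:three} then gives $Q(G,H) < 1$ outside a short list of exceptions.

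The main obstacle will be the usual boundary cases. For $n = 8$ the lower bound $b_3$ is no longer valid, and (as in Case~1 of Proposition \ref{p:o_even1}) we must replace it by a refined bound of the form $|x^G| \geqs |{\rm SO}_8^{-}(q)|/(2|{\rm SO}_{5}(q)||{\rm SO}_{3}(q)|)$, then verify the three-term inequality case by case; note that ${\rm P\O}_8^{-}(q)$ admits no triality graph automorphism, so the multi-class accounting required in the plus-type $n=8$ analysis simplifies considerably here. For $q = 3$ and small $n$ the trivial estimate $a_3 = |H|$ turns out to be too crude and must be tightened by using the exact $2$-part of $(m!)_2$ from Lemma \ref{l:factorial}; the residual stubborn cases, including $G_0 = {\rm P\O}_n^{-}(3)$ for $n \in \{8,10,12\}$, are dispatched by invoking Proposition \ref{p:comp}. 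Finally, the case $q = 9$ with $n = 8$ may need to be treated computationally in {\sc Magma}, as in the plus-type proof, to account for the possible presence of involutory field or graph-field automorphisms in $G$.
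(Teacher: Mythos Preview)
Your outline is essentially the paper's own approach: the same case split on $q \bmod 4$ and (for $q\equiv 3$) on the parity of $m$, the same choice of overgroup $L$ in each subcase, the same three-term estimate via Lemma~\ref{l:est}, and the same fallback to Proposition~\ref{p:comp} for small $(n,q)$.

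Two small corrections. First, your $b_2$ is off by a sign: plugging $\e=-$ into Proposition~\ref{p:inv}(vi) literally gives $(q^{m-1}-1)$, but the actual minimum for $s_2$-type involutions in the minus case is
\[
b_2=\frac{|{\rm SO}_n^{-}(q)|}{2|{\rm SO}_{n-2}^{+}(q)||{\rm SO}_2^{-}(q)|}=\frac{q^{n-2}(q^m+1)(q^{m-1}+1)}{2(q+1)},
\]
since the larger centraliser is of type ${\rm O}_{n-2}^{+}\times{\rm O}_2^{-}$. Your stated $b_2$ is smaller, so it remains a valid lower bound and the argument still goes through, but you should use the correct expression.

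Second, you are over-anticipating difficulties that do not arise here. For $n=8$ there is no triality in minus type, so the multi-class accounting from the plus-type $n=8$ analysis is unnecessary, and the bound $b_3=\tfrac14 q^{3n-9}$ (coming from the $(-I_3,I_{n-3})$ class) remains valid and is already sufficient; no refined $b_3$ is needed. Likewise there is no need for a separate {\sc Magma} treatment of $q=9$ with $n=8$: the three-term estimate \eqref{e:three} with $a_3=2^{n+\b}(q-1)^{m-1}$ already yields $Q(G,H)<1$ in Case~1. The only genuine residual cases are $q=3$ with $n\in\{8,10,12\}$ (handled by Proposition~\ref{p:comp}) and $q=3$ with $n\in\{14,18\}$ (handled by tightening $a_3$ using the exact value of $(m!)_2$), exactly as in the paper.
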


\begin{proof}
Here $n = 2m \geqs 8$ is even. Let $H$ be a Sylow $2$-subgroup of $G$ and set $H_0 = H \cap G_0$, $d = (4,q^m+1)$ and $\b = \delta_{9,q}$.

For $k \in \{1,2\}$, we define an involution $x \in {\rm PO}_n^{-}(q)$ to be of type $s_k$ if it lifts to an involution in ${\rm O}_n^{-}(q)$ of the form $(-I_{n-k},I_k)$. If $x$ is of type $s_1$, then 
\begin{equation*}
|x^G| \geqs \frac{|{\rm SO}_n^{-}(q)|}{2|{\rm SO}_{n-1}(q)|} = \frac{1}{2}q^{m-1}(q^m+1) = b_1
\end{equation*}
and we get
\begin{equation*}
|x^G| \geqs \frac{|{\rm SO}_n^{-}(q)|}{2|{\rm SO}_{n-2}^{+}(q)||{\rm SO}_{2}^{-}(q)|} = \frac{q^{n-2}(q^m+1)(q^{m-1}+1)}{2(q+1)} = b_2
\end{equation*}
if $x$ is of type $s_2$. As in the proof of the previous proposition, we divide the analysis into two cases, according to whether or not $q \equiv 1 \imod{4}$ or $q \equiv 3 \imod{4}$.

\vs

\noindent \emph{Case 1. $q \equiv 1 \imod{4}$}

\vs

Here $d=2$ and 
\[
|H_0| = (2(q-1))^{m-1}((m-1)!)_2,
\]
which means that $H<L<G$, where $L$ is of type 
\[
({\rm O}_{2}^{+}(q) \wr S_{m-1}) \perp {\rm O}_{2}^{-}(q) < {\rm O}_{n-2}^{+}(q) \perp {\rm O}_{2}^{-}(q).
\]
Then by arguing as in Case 2 (with $m$ odd) in the proof of Proposition  \ref{p:o_even1}, we calculate that $H$ contains at most $a_1 = m(q-1)+2$ involutions of type $s_1$ and at most 
\[
a_2 = \frac{1}{2}(m-1)(m-2)((q-1)^2+2(q-1)) + (m-1)q^2 + 1
\]
involutions of type $s_2$. And if $x \in G$ is any other type of involution, then $|x^G|>\frac{1}{4}q^{3n-9} = b_3$ as in \eqref{e:oeven3} and we note that $|H| \leqs 2^{n+\b}(q-1)^{m-1} = a_3$. It is now routine to check that the upper bound in \eqref{e:three} is sufficient.

\vs

\noindent \emph{Case 2. $q \equiv 3 \imod{4}$}

\vs

First assume $m$ is even. Here $|H_0| = (2(q+1))^{m-1}((m-1)!)_2$ and thus $H$ is contained in a subgroup of type 
\[
({\rm O}_{2}^{-}(q) \wr S_{m-1}) \perp {\rm O}_{2}^{+}(q) < {\rm O}_{n-2}^{-}(q) \perp {\rm O}_{2}^{+}(q).
\]
Then by arguing as in Case 1, setting $b_1,b_2$ and $b_3$ as above, we deduce that \eqref{e:three} holds with  
\[
a_1 = m(q+1)-2, \; a_2 = \frac{1}{2}(m-1)(m-2)((q+1)^2+2(q+1)) + (m-1)q^2 + 1
\]
and $a_3 = 2^{n-1}(q+1)^{m-1}$. It is easy to check that this bound is sufficient unless $q = 3$ and $n \in \{8,12\}$. The latter two cases were handled in Proposition \ref{p:comp} and the result follows.

Finally, suppose $m$ is odd.  In this case we have
\[
|H_0| = \frac{1}{8}(2(q+1))^m(m!)_2
\]
and thus $H<L<G$ with $L$ of type ${\rm O}_{2}^{-}(q) \wr S_m$. And so by arguing as in Case 2 (with $m$ even) in the proof of Proposition \ref{p:o_even1}, we deduce that \eqref{e:three} holds, where $b_1,b_2,b_3$ are defined as above and 
\[
a_1 = m(q+1), \; a_2 = \frac{1}{2}m(m-1)((q+1)^2+2(q+1))+m,\; a_3 = 2^{n-1}(q+1)^m.
\]
We then check that this bound is sufficient unless $q = 3$ and $n \in \{10,14,18\}$. For $(n,q)=(10,3)$ we can appeal to Proposition \ref{p:comp}. And in the two remaining cases, by setting $a_3 = 2^{3m}(m!)_2$, we can check that the bound in \eqref{e:three} is good enough. The result follows.
\end{proof}

\vs

This completes the proof of Theorem \ref{t:main2}. By combining this result with Theorem \ref{t:zenkov}, we conclude that the proof of Theorem \ref{t:main1} is complete.

\end{document}